\title[$0$-Schur algebras]{A geometric realisation of $0$-Schur and $0$-Hecke algebras}
\author{Bernt Tore Jensen and Xiuping Su}
\newtheorem{theorem}{Theorem}[section]
\newtheorem{theorem2}{Theorem}
\newtheorem{lemma}[theorem]{Lemma}
\newtheorem{proposition}[theorem]{Proposition}
\newtheorem{example}[theorem]{Example}
\newtheorem{corollary}[theorem]{Corollary}
\newcommand{\gl}{\mathrm{GL}}
\newcommand{\rad}{\mathrm{rad}}
\renewcommand{\ker}{\mathrm{ker}}
\newcommand{\im}{\mathrm{im}}
\newcommand{\dvector}{{\underline{\mathrm{dim}}}}
\newcommand{\lra}{\longrightarrow}
\newcommand{\ra}{\rightarrow}
\newcommand{\sdp}{\times\kern-.2em\vrule height1.1ex depth-.05ex}
\newcommand{\epi}{\lra \kern-.8em\ra}
\begin{document}

\begin{abstract}
We define a new product on orbits of pairs of flags in a vector space, using open orbits
in certain varieties of pairs of flags. This new product defines 
an associative $\mathbb{Z}$-algebra, denoted by $G(n,r)$. We 
show that $G(n,r)$ is a geometric realisation of the $0$-Schur algebra 
$S_0(n, r)$ over $\mathbb{Z}$, 
which is the $q$-Schur algebra $S_q(n,r)$ at $q=0$. 
We view a pair of flags as a pair of projective resolutions for a quiver
of type $\mathbb{A}$ with linear orientation, and study $q$-Schur algebras 
from this point of view. This allows us to understand
the relation between $q$-Schur algebras and Hall algebras and 
construct bases of $q$-Schur algebras, which are used in the proof
of the main results.  Using the geometric realisation, we 
construct idempotents and multiplicative bases for $0$-Schur algebras.  
We also give a geometric realisation of $0$-Hecke algebras and a  
presentation of the $q$-Schur algebra over a base ring where $q$ is not 
invertible. 
\end{abstract}

\maketitle

\section*{Introduction}

Let $k$ be a finite or an algebraically closed field. When $k$ is
finite, we denote by $q$  the number of elements in $k$.
Let $n\geq 1$ and $r\geq 1$ be integers and let $\mathcal{F}$ denote
the variety of partial $r$-step flags in an $n$-dimensional vector space
$V$. The general linear group $\gl(V)$ acts on $\mathcal{F}$ by
change of basis on $V$ and the orbits under this action are denoted
by $\mathcal{F}_{\underline d}$, where ${\underline d}$ is a decomposition 
of the integer $n$ into $r$ parts.

Beilinson, Lusztig and MacPherson \cite{BLM} (see also Du \cite{Duj}
and Green \cite{GreenR}) construct the $q$-Schur algebra $S_q(n,r)$
on the basis of $\gl(V)$-orbits in
$\mathcal{F}\times \mathcal{F}$, where $\gl(V)$ acts diagonally on
$\mathcal{F}\times \mathcal{F}$ by change of basis on $V$. This
algebra can be defined using the pair of maps
$$\xymatrix{\mathcal{F}\times \mathcal{F} \times \mathcal{F}
\ar^{\pi}[d] & \ar^{\Delta}[r] && (\mathcal{F}\times \mathcal{F})
\times (\mathcal{F}\times \mathcal{F})  \\ \mathcal{F}\times \mathcal{F}}$$
where $\Delta(f,f',f'')=((f,f'),(f',f''))$ and $\pi(f,f',f'')=(f,f'')$, and the
structure constant in front of an orbit $[g,g']=\gl(V)(g,g')$ in the product
of $[f,f']$ and $[f',f'']$ is the number of elements in the set
$$\pi^{-1}(g,g')\cap \Delta^{-1}([f,f']\times [f'f'']).$$ The structure constants
are given by polynomials evaluated at $q$ for any finite field.  
Beilinson, Lusztig and MacPherson construct
a limit of these finite dimensional algebras to give a geometric realisation
of the quantised enveloping algebra of $gl_n$.

In this paper we take the point of view that a pair of flags is a pair of projective 
resolutions for a quiver of type $\mathbb{A}$ with linear orientation. 
We show that a pair of flags and its corresponding pair of projective resolutions 
uniquely determine each other. We then construct bases and
prove results that will be used in the subsequent parts of the paper. 

The main goal of this paper is to give a geometric realisation of 
$0$-Schur algebras, which are $q$-Schur algebras at $q=0$. 
We define a new algebra $G(n,r)$ on the same basis as $S_q(n, r)$ by defining the
product of $[f,f']$ and $[f',f'']$ to be the unique open orbit (see Section
\ref{genmult}) in $$\pi\Delta^{-1}([f,f']\times [f',f'']),$$ and
obtain the following main result.   
\begin{theorem2}\label{Main}
As $\mathbb{Z}$-algebras, 
$G(n,r)$ is isomorphic to $S_q(n,r)_{q=0}$.
\end{theorem2}

We remark that the definition of the new product is similar to the one defined by Reineke 
\cite{Reineke} for Hall algebras and the main result generalises the main result in \cite{Su}.

To prove Theorem \ref{Main}, we first give a presentation of $G(n,r)$ 
using quivers and relations and then use the presentation to prove the isomorphism 
in the theorem.  We remark that 
Deng and Yang \cite{DY} have independently given a similar presentation
for $S_0(n,r)$, using a different approach. 

Among applications of the geometric realisation, we  obtain 
a multiplicative basis for $S_0(n,r)$ and 
using open orbits we construct a block in $S_0(n,r)$ isomorphic
to a matrix algebra and families of idempotents. Also, we give a presentation of 
$S_q(n, r)$ over a base ring where $q$ is not invertible. 
In the special case $n=r$, we  obtain a geometric
realisation of the $0$-Hecke algebra $H_0(n)$  

The paper is organised as follows. In Section 1 we
recall the construction of Beilinson, Lusztig and MacPherson and in Section 2
we describe orbits in $\mathcal{F}\times \mathcal{F}$ using representations
of linear quivers of type $\mathbb{A}$.  In Section 3 we recall the definition
of the positive and negative parts of the $q$-Schur algebras and their
relationship to the Hall algebra, and use this description to construct a basis
of the $q$-Schur algebra in Section 4. In Section 5 we give a describe 
quantised Schur algebras using quivers and relations. 
We define the generic algebra in Section 6 and show that it is isomorphic 
to the $0$-Schur-algebra in Section 7. In Section 8 we consider the
degeneration order of orbits in $\mathcal{F}\times \mathcal{F}$, and use
open orbits to construct idempotents
for the $0$-Schur algebra in Section 9. Finally, we discuss 
$0$-Hecke algebras in Section 10.

\section{Flag varieties and $q$-Schur algebras} \label{firstsec}

In this section we fix notation and recall some definitions and results
of Beilinson, Lusztig and MacPherson \cite{BLM} on $q$-Schur
algebras. We also recall the definition of the $0$-Schur algebra.

Let $n,r\geq 1$ be integers and $V$ an $r$-dimensional vector space
over a field $k$. Denote by $\mathcal{F}$ the set of all $n$-steps flags
$$f:\{0\}=V_0\subseteq V_1 \subseteq \cdots\subseteq
V_n=V$$ in $V$. The general linear group $\gl(V)$ acts on $\mathcal{F}$
by change of basis on $V$.

For any $f\in \mathcal{F}$, define $d_i=\dim V_i-\dim V_{i-1}$ for
$i=1, \cdots, n$. Then $$\underline d=d_1+\cdots +d_n$$ is a decomposition
of $r$ into $n$ parts. Two flags $f$ and $g$ are isomorphic, i.e. they
are in the same $\gl(V)$-orbit, if and only if they have the same decomposition.
Let $D(n,r)$ denote the set of all decompositions of $r$ in $n$ parts, and
let $\mathcal{F}_{\underline d}\subseteq \mathcal{F}$ denote the orbit
corresponding to $\underline d\in D(n,r)$.

Let $\gl(V)$ act diagonally on $\mathcal{F}\times \mathcal{F}$, i.e. by change
of basis on $V$.
Given a pair of flags $(f,f')\in \mathcal{F}\times \mathcal{F},$
$$f:\{0\}=V_0\subseteq V_1 \subseteq \cdots \subseteq V_n=V$$ and
$$f':\{0\}=V'_0\subseteq V'_1 \subseteq \cdots \subseteq V'_n=V,$$ define
a matrix $A=A(f,f')=(A_{ij})$, with $$A_{ij}=\dim({V_{i-1}+V_i \cap
V'_{j}})-\dim({V_{i-1}+V_i \cap V'_{j-1}}).$$ This defines a bijection
between the $\gl(V)$-orbits in ${\mathcal F}\times {\mathcal F}$ and
matrices of non-negative integers with entries which sum to $r$. We
denote the $\gl(V)$-orbit of $(f, f')$ by $[f, f']$ and by $e_A$ if we
want to emphasize the matrix $A=A(f,f')$. Two pairs of flags $(f,f')$
and $(g,g')$ are isomorphic if they belong to the same $\gl(V)$-orbit
and we write $(f,f')\simeq (g,g')$ in this case.

Let $e_A, e_{A'}, e_{A''}\in \mathcal{F}\times \mathcal{F}/\gl(V)$ and
$(f_1, f_2)\in e_{A''}$. Let $$S(A,A',A'')=\{f\in \mathcal{F}\mid (f_1, f)\in
e_A, (f, f_2)\in e_{A'} \}.$$ Following Proposition 1.1 in \cite{BLM}, there
exists a polynmial $g_{A, A', A''} \in \mathbb{Z}[q]$, such that
$$g_{A, A', A''}(q)= |S(A,A',A'')|,$$ for all finite fields.
The projection $\mathcal{F}\times \mathcal{F} \times
\mathcal{F}\rightarrow \mathcal{F}$ onto the middle factor maps
$\Delta^{-1}(e_A\times e_{A'})\cap \pi^{-1}(f_1,f_2)$ bijectively
onto $S(A,A',A'')$, and so these two sets have the same cardinality.

Recall \cite{BLM}  that the $q$-Schur algebra $S_q(n, r)$ is the free
$\mathbb{Z}[q]$-module with basis $\mathcal{F}\times\mathcal{F}/
\gl(V)$, and associative multiplication given by $$e_{A}e_{A'} =
\sum_{e_{A''}\in \mathcal{F}\times \mathcal{F}/\gl(V)}g_{A, A', A''}
e_{A''}.$$

Although, in general it is difficult to compute the polynomial $g_{A, A', A''}$,
the following lemma from \cite{BLM}, dealing with special $A$ and $A'$, gives clear
multiplication rules. Also, Deng and Yang give a recursive formula of
 $g_{A, A', A''}$ using Hall polynomials \cite{DengYang2}.

Let $$[m]=\frac{q^m-1}{q-1}=q^{m-1}+\cdots+q + 1$$ for
 $m\in\mathbb{N}$ and let $E_{i,j}$ denote the $(i,j)'$th elementary
matrix.

\begin{lemma} \cite{BLM} \label{Lemma3.2}
Assume that $1\leq h<n$. Let $e_A\subseteq \mathcal{F}_{\underline e}
\times \mathcal{F}_{\underline f}$. Assume that $e_B\subseteq
\mathcal{F}_{\underline d}\times \mathcal{F}_{\underline e}$ and
$e_C\subseteq \mathcal{F}_{\underline d'}\times \mathcal{F}_{\underline e}$
such that  $B-E_{h, h+1}$, $C-E_{h+1, h}$ are diagonal matrices. Then
the following multiplication formuli hold in $S_q(n,r),$
$$
e_Be_A=\sum_{\{p| A_{h+1, p}>0\}} q^{\sum_{j>p}A_{h,j}}
[A_{h,p}+1]e_{X},
$$

$$
e_Ce_A=\sum_{\{p| A_{h, p}>0\}}q^{\sum_{j<p}A_{h+1, j}}
[A_{h+1,p}+1]
e_{Y},
$$
\noindent where $X=A+E_{h,p}-E_{h+1,p}$ and $Y=A-E_{h,p}+E_{h+1,p}$.
\end{lemma}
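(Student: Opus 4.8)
The plan is to prove this multiplication formula by a direct counting argument, realizing both sides geometrically through the set $S(B,A,A'')$ (respectively $S(C,A,A'')$) described in the preamble. Fix a pair of flags $(f_1,f_2)$ in the orbit $e_{A''}$ and count flags $f$ with $(f_1,f)\in e_B$ and $(f,f_2)\in e_A$. Since $B-E_{h,h+1}$ is diagonal, the flag $f_1$ differs from the intermediate flag only in its $h$-th subspace: passing from $f_1$ to $f$ shifts exactly one dimension of basis vector across the step from position $h{+}1$ to position $h$. The geometry is therefore concentrated entirely in a single two-step local picture around level $h$, and the combinatorics reduces to counting hyperplanes (or lines) in a small quotient space.

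\textbf{Key steps.} First I would record precisely how the diagonal-plus-$E_{h,h+1}$ condition on $B$ constrains the flag $f$ relative to $f_1$: it forces $f$ to agree with $f_1$ in every subspace $V_i$ for $i\neq h$, and to have $V_h(f)$ obtained by enlarging $V_h(f_1)$ by one dimension taken from $V_{h+1}(f_1)$. Next, I would translate the condition $(f,f_2)\in e_A$ into the requirement that $A(f,f_2)=X=A+E_{h,p}-E_{h+1,p}$ for exactly those $p$ with $A_{h+1,p}>0$, since the single extra dimension added at level $h$ must come from precisely one of the subspaces $V'_p$ of $f_2$ where $A_{h+1,p}>0$. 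For each such admissible $p$, I would count the number of valid choices of the one-dimensional enlargement: this is a choice of a line in a suitable subquotient of $V$, and the number of such lines over a field with $q$ elements is exactly $[A_{h,p}+1]=q^{A_{h,p}}+\cdots+1$, while the remaining freedom contributes the prefactor $q^{\sum_{j>p}A_{h,j}}$ coming from the positions to the right of $p$ at level $h$ that can absorb the shift. Summing over admissible $p$ then yields the stated formula for $e_Be_A$, and the formula for $e_Ce_A$ follows by an entirely symmetric argument with the roles of $h$ and $h{+}1$ interchanged (shifting one dimension upward rather than downward), which accounts for the exponent $\sum_{j<p}A_{h+1,j}$ and the coefficient $[A_{h+1,p}+1]$.

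\textbf{Main obstacle.} The hard part will be the careful bookkeeping of the exponents of $q$: identifying exactly which subquotient the line is chosen in so as to produce the Gaussian binomial $[A_{h,p}+1]$, and then verifying that the positions $j>p$ (respectively $j<p$) contribute the stated power of $q$ rather than an off-by-one or misindexed exponent. This requires fixing an explicit filtration-compatible basis of $V$ adapted to both $f_1$ and $f_2$ and tracking how the insertion of the new basis vector at level $h$ interacts with the intersections $V_i\cap V'_j$ that define the matrix entries $A_{ij}$. Since this lemma is quoted from \cite{BLM}, I would present only the conceptual reduction to the local two-step counting problem and the identification of the line-counting factors, referring to \cite{BLM} for the complete verification of the exponents.
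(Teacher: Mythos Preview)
Your proposal is correct in spirit and matches the argument in \cite{BLM}, but note that the paper itself gives no proof of this lemma at all: it is stated with the citation \cite{BLM} and used as a black box. Your own conclusion---to present only the conceptual reduction and refer to \cite{BLM} for the verification of the exponents---is therefore exactly what the paper does, only more generously, since the paper does not even sketch the reduction.
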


The classical Schur algebra $$S(n,r)=S(n,r)\otimes_{\mathbb{Z}[q]}
\mathbb{Z}[q]/(q-1)$$ is obtained by evaluating the structure constants
$g_{A,A',A''}$ at $q=1$, and the $0$-Schur algebra $$S_0(n,r)=S_q(n,r)
\otimes_{\mathbb{Z}[q]} \mathbb{Z}[q]/(q)$$ is obtained by evaluating
the structure constants $g_{A,A',A''}$ at $q=0$.

\section{Representations of linear quivers} \label{linearquiver}

In this section we describe orbits of pairs of flags  using representations
of a linear quiver $\Lambda=\Lambda(n)$ of type $\mathbb{A}_n$,
$$\Lambda(n):\xymatrix{1 \ar[r] & 2 \ar[r] & \cdots \ar[r] & n}.$$

A representation $X$ of $\Lambda$ consists of vector spaces $X_i$,
$i=1,\cdots,n$, and linear maps $X_\alpha:X_i\rightarrow X_{i+1}$,
$\alpha:i\rightarrow i+1$, $i=1,\cdots,n-1$. A homomorphism
$h:X\rightarrow Y$ between two representations $X$ and $Y$ is a collection
of linear maps $h_i:X_i\rightarrow Y_i$, satisfying $h_{i+1}X_\alpha=
Y_{\alpha}h_i$. A homomorphism $h$ is an isomorphism if  $h_i$ is a
linear isomorphism for all $i=1,\cdots,n$, and we write $X\simeq Y$ if
$X$ and $Y$ are isomorphic. A direct sum of representations is a
direct sum of vector spaces and maps. A nonzero representation is
indecomposable if it is not isomorphic to a direct sum of non-zero
representations.

The dimension vector of $X$ is denoted by $\dvector X=(\dim X_i)_i$.
If we fix each vector space $X_i$ we may parameterise representations
of $\Lambda$ by the vector space of all maps $(X_\alpha)_\alpha$ on
which $\prod_i \gl(X_i)$ acts by change of basis, such that the orbits
correspond to isomorphism classes of representations with dimension
vector $\dvector X$.

Let $M_{ij}$, for $j\geq i$, be the indecomposable representation supported
on the interval of vertices $[i,j]=\{i,\cdots,j\}$, with vector spaces in the support
equal to $k$ and all non-zero maps equal to the identity.
Any representation $M$ decomposes uniquely, up to isomorphism,
as $$M\simeq \bigoplus_{i,j}(M_{ij})^{d_{ij}}$$ for non-negative integers
$d_{ij}$. For each vertex $i$, let $S_i=M_{ii}$ and $P_i=M_{in}$ be the
simple and indecomposable projective representation at $i$, respectively.
A representation $P$ is projective if and only if each map $P_\alpha$ is
injective.

For each decomposition $\underline d\in D(n,r)$, let $P(\underline d)$ be
the projective representation defined by $$P(\underline d)=\bigoplus_i
{(P_i)}^{d_i}, P(\underline d)_n=V.$$ By taking images of the maps
$P(\underline d)_i\rightarrow P(\underline d)_n=V$ we get a $n$-step flag in
$\mathcal{F}_{\underline d}$. We will view any projective representation
$P(d)$ as a flag in $\mathcal{F}_d$.

An $n$-step flag in $V$ is a projective representation of $\Lambda$, with
maps equal to the the inclusions $V_i\subseteq V_{i+1}$. Two flags are
isomorphic if and only if they are isomorphic as representations. If $f$ is
a flag in $U$ and $f'$ is a flag in $U'$ then $f\oplus f'$ denotes the flag in
$V=U\oplus U'$ with vector space at each vertex $i$ equal to $U_i\oplus U_i'$.

A pair of flags $(g,f)$ with $g\subseteq f$ can be viewed as a projective
resolution $$0 \rightarrow g \rightarrow f \rightarrow f/g \rightarrow 0.$$
If $(f_1, f_2), (f_1', f_2')\in \mathcal{F}\times \mathcal{F}$, with
$f_1\subseteq f_2$ and $f_1'\subseteq f_2'$, then $(f_1, f_2)\simeq (f_1', f_2')$
if and only if $f_2/f_1\simeq f_2'/f_1'$ and $f_2\simeq f_2'$. This fact
generalises to arbitrary pairs, and this is the main lemma of this section.

\begin{lemma}\label{orbit1}
Let $(f_1, f_2), (f_1', f_2')\in \mathcal{F}\times \mathcal{F}$. The following
are equivalent.
\begin{itemize}

\item[i)] $(f_1, f_2)\simeq (f_1', f_2')$.

\item[ii)] $(f_1+f_2)/f_i\simeq (f_1'+f_2')/f_i'$ for $i=1,2$, and
$f_1+f_2\simeq f_1'+f_2'$.

\item[iii)]  $(f_i, f_1+f_2)\simeq (f_i', f_1'+f_2')$ for $i=1,2$.

\end{itemize}
\end{lemma}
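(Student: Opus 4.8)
The plan is to prove $ii)\Leftrightarrow iii)$ directly from the classification of nested pairs recalled just before the lemma, then to close the loop with $i)\Leftrightarrow iii)$, where the forward implication is immediate and the reverse one carries all the work. For $ii)\Leftrightarrow iii)$ I would apply the nested-pair criterion to the genuinely nested pairs $(f_i,f_1+f_2)$ and $(f_i',f_1'+f_2')$: it says $(f_i,f_1+f_2)\simeq(f_i',f_1'+f_2')$ exactly when $(f_1+f_2)/f_i\simeq(f_1'+f_2')/f_i'$ and $f_1+f_2\simeq f_1'+f_2'$. Running this for $i=1$ and $i=2$ and observing that the condition $f_1+f_2\simeq f_1'+f_2'$ is common to both, the two instances of iii) together become precisely ii). The implication $i)\Rightarrow iii)$ is clear: if $g\in\gl(V)$ satisfies $gf_1=f_1'$ and $gf_2=f_2'$, then by linearity $g(f_1+f_2)=f_1'+f_2'$, so the same $g$ realises $(f_i,f_1+f_2)\simeq(f_i',f_1'+f_2')$ for each $i$.

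The substance is $iii)\Rightarrow i)$, and I would begin by normalising the ambient sums. Viewing every flag as a subrepresentation of $E=(V,\dots,V)$ with all identity maps, so that $\gl(V)=\smorphism(E)^\times$, the flag $S=f_1+f_2$ is a subrepresentation with $S_n=V$; being a flag it is projective and hence determined by $\dvector S$, while the quotient $E/S$ has all its maps surjective, is therefore injective, and is likewise determined by its dimension vector. Since iii) forces $f_1+f_2\simeq f_1'+f_2'$, the dimension vectors of $S$ and $S'=f_1'+f_2'$ agree, hence so do those of $E/S$ and $E/S'$, giving $E/S\simeq E/S'$. The nested-pair criterion applied to $(S,E)$ and $(S',E)$ then produces $g_0\in\gl(V)$ with $g_0(f_1+f_2)=f_1'+f_2'$; replacing $(f_1',f_2')$ by $g_0^{-1}(f_1',f_2')$, I may assume $f_1+f_2=f_1'+f_2'=:S$ while keeping the hypotheses iii), since these are orbit conditions.

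It now remains to produce a single $g$ stabilising $S$ with $gf_1=f_1'$ and $gf_2=f_2'$, noting that the automorphisms of $V$ stabilising the flag $S$ are exactly $\smorphism(S)^\times$. Using the $i=1$ instance of iii) I would first move $f_1$ onto $f_1'$ by an element of $\smorphism(S)^\times$, reducing to $f_1=f_1'$, and then seek an automorphism of $S$ that fixes $f_1$ and sends $f_2$ to $f_2'$. Writing $M=S/f_1$ with projection $\pi$, the restrictions $\pi|_{f_2}$ and $\pi|_{f_2'}$ are surjections from projectives onto $M$ whose sources are isomorphic (again by iii)), hence are equivalent as projective presentations of $M$ via the projective-cover decomposition.

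The main obstacle is precisely to upgrade such an abstract equivalence $f_2\to f_2'$ over $M$, together with the identity on $f_1$, to a well-defined automorphism of $S=f_1+f_2$: the two prescriptions must be made to agree on $f_1\cap f_2$, which is not automatic. I would exploit the remaining freedom to modify the equivalence by automorphisms of $f_2'$ lying over $M$, together with a splitting (horseshoe-type) argument, to arrange this compatibility on the intersection. This gluing step is where the real care is needed; once it is carried out one obtains $g\in\smorphism(S)^\times\subseteq\gl(V)$ with $gf_1=f_1'$ and $gf_2=f_2'$, which gives $iii)\Rightarrow i)$ and, combined with the above, the equivalence of all three conditions.
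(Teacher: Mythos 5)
Your reductions are all sound: $i)\Rightarrow iii)$ is clear, $ii)\Leftrightarrow iii)$ is the same reformulation via the nested-pair criterion that the paper uses, and the normalisations to $f_1+f_2=f_1'+f_2'=S$ and then to $f_1=f_1'$ are legitimate, because flags with equal dimension vectors form a single $\gl(V)$-orbit and the hypotheses in iii) are orbit conditions. After these reductions the lemma reads: if $f_1+f_2=f_1+f_2'=S$ and $(f_2,S)\simeq (f_2',S)$, then some automorphism of $S$ preserves $f_1$ and carries $f_2$ to $f_2'$. Precisely here your argument stops being a proof. You produce an isomorphism $\phi\colon f_2\to f_2'$ over $M=S/f_1$, you correctly identify that gluing $\phi$ with an automorphism of $f_1$ requires agreement on $K=f_1\cap f_2$, and you then state that you ``would exploit the remaining freedom'' to arrange this, conceding that this is ``where the real care is needed''. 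That deferred step is the entire mathematical content of the lemma. Concretely, with $K'=f_1\cap f_2'$, one must show that after replacing $\phi$ by $u'\phi u$ with $u\in\mathrm{Aut}(f_2)$ and $u'\in\mathrm{Aut}(f_2')$ lying over $M$, the restriction $\phi|_K\colon K\to K'$ extends to an automorphism of $f_1$. Fixing $\beta\in\mathrm{Aut}(f_1)$ with $\beta(K)=K'$ (such $\beta$ exists by the nested-pair criterion), this amounts to showing that a specific element of $\mathrm{Aut}(K)$ lies in the product of the images of the restriction homomorphisms $\{\alpha\in\mathrm{Aut}(f_1)\mid \alpha(K)=K\}\to \mathrm{Aut}(K)$ and $\{u\in\mathrm{Aut}(f_2)\mid u(K)=K\}\to \mathrm{Aut}(K)$. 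These images are proper subgroups in general: already for $\Lambda(2)$ with $f_1=P_1\oplus P_2$ and $K=\rad P_1\oplus P_2$, the first image is the triangular subgroup of $\mathrm{Aut}(K)\simeq \gl_2(k)$. You offer no argument that the product of the two images contains the element you need, so the proposal has a genuine gap exactly at its crux.

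The paper's proof shows how to avoid this obstruction altogether: it factors out $f_1\cap f_2$ rather than $f_1$. Then $(f_1+f_2)/(f_1\cap f_2)$ is the internal direct sum $f_1/(f_1\cap f_2)\oplus f_2/(f_1\cap f_2)$, and likewise for the primed pair; one picks isomorphisms $g_1,g_2$ between corresponding summands (they exist by ii)) and, using that $f_1+f_2$ and $f_1'+f_2'$ are isomorphic projective representations, lifts $(g_1\oplus g_2)\circ \pi$ through $\pi'$ to an isomorphism $h\colon f_1+f_2\to f_1'+f_2'$. Because $f_i$ is the full preimage under $\pi$ of the $i$-th summand, the equalities $h(f_i)=f_i'$ come for free; no compatibility condition on intersections ever has to be arranged, since it is absorbed into the choice of the lift. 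The quickest repair of your argument is therefore to replace your quotient $S/f_1$ by $S/(f_1\cap f_2)$ in the final step --- but doing so reproduces the paper's proof rather than completing your alternative route.
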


\begin{proof}
The implication from $i)$ to $ii)$ is trivial.

We prove that $ii)$ implies $i)$.
By $ii)$, $f_i/(f_1\cap f_2)\simeq f_i'/(f_1'\cap f_2')$, and
$f_1\cap f_2\simeq f'_1 \cap f '_2$.

Let $g_i:f_i/f_1\cap f_2\rightarrow f_i'/f_1'\cap f_2'$ be isomorphisms.
Consider the following diagram,
$$\xymatrix{f_1+f_2\ar[rr]^{\pi \hspace{45pt}} \ar[d]^{\exists h}
&&(f_1/f_1\cap f_2)\oplus (f_2/f_1\cap f_2)\ar[d]^{\tiny \left(
\begin{matrix} g_1 & 0 \\ 0 &g_2\end{matrix}\right)} \\ f'_1+f'_2
\ar[rr]^{\pi'\hspace{42pt} }  && (f'_1/f'_1\cap f'_2)
\oplus (f'_2/f'_1\cap f'_2),}$$ where $\pi$ and $\pi'$ are natural
projections. Since $f_1+f_2$ and $f_1'+f_2'$ are isomorphic
projective representations, there is an isomorphism $h$ such
that the above diagram commutes. Thus $h(f_1)\subseteq (\pi')^{-1}
(f_1'/f_1'\cap f_2')=f_1' $. Therefore $h(f_1)=f_1' $. Similarly
$h(f_2)=f_2'$. Hence $(f_1, f_2)$ and $(f_1', f_2')$ are in the
same orbit. This proves $i)$

$iii)$ is a reformulation of $ii)$. This finishes the proof.
\end{proof}

We mention that it is possible to restate the lemma by replacing the
inclusions $f_i\subseteq f_1+f_2$ by the inclusions $f_1\cap f_2\subseteq f_i$.
Also, the condition $f_1+f_2\simeq f_1'+f_2'$ in part ii) can be replaced by
$f_1\simeq f_1'$ and $f_2\simeq f_2'$.

We give some consequences of the lemma. The lemma shows that
pairs of flags in $\mathcal{F}\times \mathcal{F}$ are determined up to
isomorphism by triples $(\underline d,[M],[N])$,  where $\underline d\in
D(n,r)$ and $[M]$, $[N]$ are isomorphism classes of representations of
$\Lambda$ with a surjection $P(\underline d)\rightarrow M\oplus N$.
Conversely, given a triple $(\underline d,[M],[N])$ with $\underline d\in D(n,r)$
and a surjection $\phi:P(\underline d)\rightarrow M\oplus N$ we construct a
corresponding pair $(g_1,g_2)$ as follows. Recall that a surjective homomorphism
$\psi:P\rightarrow M$ from a projective representation $P$ is a projective cover
if $\ker\psi\subseteq \rad P$ where $\rad P$ denotes the Jacobson radical of $P$.
Decompose $$P(\underline d)=f_1 \oplus f_2 \oplus c$$ such that $\phi_{|f_1}$
is a projective cover of $M$ and $\phi_{|f_2}$ is a projective cover of $N$.
Then the kernel $\ker\phi$ decomposes as $$\ker\phi=f'_1\oplus f'_2\oplus c$$
where $f'_i=\ker\phi_{|f_i}$. Now let $$(g_1,g_2)=(f_1\oplus f'_2\oplus c,
f'_1\oplus f_2\oplus c).$$ Then $g_1+g_2=f_1\oplus f_2\oplus c, g_1
\cap g_2=f'_1\oplus f'_2\oplus c$ and $f'_1\subseteq \rad f_1, f'_2
\subseteq \rad f_2$, $g_1+g_2/g_2\simeq g_1/g_1\cap g_2\simeq M$ and
 $g_1+g_2/g_1\simeq g_2/g_1\cap g_2\simeq N$.

Given a pair of flags $(f,f')\in \mathcal{F}\times \mathcal{F}$, we
have the following descripton of the matrix $A$ corresponding to the
orbit $e_A=[f,f']$.

\begin{lemma} \label{Lemma2.2}
Let $e_A=[f,f']$.  Then
\begin{itemize}
\item[i)] if $i<j$, then $A_{ij}$ is the multiplicity of $M_{i-1,j}$
as a direct summand in $f/f\cap f'$,
\item[ii)] if $i>j$, then $A_{ij}$ is the multiplicity of $M_{j,i-1}$
as a direct summand in $f'/f\cap f'$, and
\item[iii)] $A_{ii}$ is the multiplicity of $M_{in}\rightarrow M_{in}$
as a contractible summand in the projective resolution
$f\cap f' \subseteq f+f'$.
\end{itemize}
\end{lemma}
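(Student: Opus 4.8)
The plan is to reduce all three parts to a single computation in terms of the intersection dimensions $c_{ij}=\dim(V_i\cap V'_j)$ for $0\le i,j\le n$. First I would rewrite the defining formula for $A$. Since $V_{i-1}\subseteq V_i$, the dimension identity gives $\dim(V_{i-1}+V_i\cap V'_j)=\dim V_{i-1}+c_{ij}-c_{i-1,j}$, and subtracting the analogous term for $j-1$ yields the symmetric second--difference expression $A_{ij}=c_{ij}-c_{i-1,j}-c_{i,j-1}+c_{i-1,j-1}$. This form makes the $f\leftrightarrow f'$ symmetry transparent: interchanging the two flags transposes $A$ (replacing $c_{ij}$ by $c_{ji}$) and exchanges $f/(f\cap f')$ with $f'/(f\cap f')$, so part ii) will follow from part i) by symmetry, and it suffices to treat part i) together with the diagonal.

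For part i) I would use the standard rank description of multiplicities of interval modules over the linear quiver $\Lambda$. For a representation $X$ write $\rho_{ab}$ for the rank of the composite $X_a\to X_b$ (with $a\le b$); since $\rho_{ab}$ counts exactly the indecomposable summands whose support contains $[a,b]$, inclusion--exclusion gives that the multiplicity of $M_{ab}$ in $X$ equals $\rho_{ab}-\rho_{a-1,b}-\rho_{a,b+1}+\rho_{a-1,b+1}$. Applying this to $M=f/(f\cap f')$, whose space at vertex $i$ is $V_i/(V_i\cap V'_i)$, I would check that the composite $M_a\to M_b$ has image $(V_a+V_b\cap V'_b)/(V_b\cap V'_b)$ and hence rank $\dim V_a-c_{ab}$. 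Substituting this into the multiplicity formula, the $\dim V$ terms cancel and what remains is $c_{a-1,b}-c_{ab}+c_{a,b+1}-c_{a-1,b+1}=A_{a,b+1}$. Taking $(a,b)=(i,j-1)$, a genuine interval exactly when $i<j$, then identifies $A_{ij}$ with the multiplicity in $f/(f\cap f')$ of the interval module supported on $[i,j-1]$; the identical computation run for $N=f'/(f\cap f')$ produces the module supported on $[j,i-1]$ when $i>j$, giving part ii).

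For part iii) I would use the projective resolution $0\to f\cap f'\to f+f'\to M\oplus N\to 0$ furnished by Lemma~\ref{orbit1} and the discussion following it, in which $f+f'$ is projective. Splitting off the projective cover of $M\oplus N$ writes $f+f'$ as (projective cover)$\,\oplus\,C$ with $C$ contractible and $C\subseteq f\cap f'$, so the number of contractible summands $P_i\to P_i$ equals the multiplicity of $P_i$ in $f+f'$ minus its multiplicity in the projective cover of $M\oplus N$. The former is $\dim(V_i+V'_i)-\dim(V_{i-1}+V'_{i-1})$; the latter is the number of summands of $M$ and of $N$ beginning at vertex $i$, each given by $\dim(\text{space at }i)-\rho_{i-1,i}$. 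Rewriting all of these through the $c_{ij}$ as above and subtracting, the dimension terms again cancel and the remainder collapses to $c_{ii}-c_{i-1,i}-c_{i,i-1}+c_{i-1,i-1}=A_{ii}$.

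The dimension bookkeeping is routine once the ranks are expressed through the $c_{ij}$. The one genuine point to pin down is the index shift relating the endpoints of an interval module to the matrix position — equivalently, verifying that parts i) and ii) are exchanged correctly by transposition, which forces the summand in part i) to be the interval module on $[i,j-1]$ — together with the boundary conventions $\rho_{0,\bullet}=\rho_{\bullet,n+1}=0$ that keep the interval--multiplicity formula valid at the two ends of the quiver.
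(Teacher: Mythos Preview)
Your argument is correct and more self-contained than the paper's. The paper's route for part~i) is to introduce the matrix $B$ of the pair $(f,f\cap f')$, show $B_{ij}=A_{ij}$ for $i<j$ by direct manipulation of the defining formula, and then assert without further computation that $B_{ij}$ equals the desired multiplicity in $f/(f\cap f')$; part~iii) is handled by an explicit decomposition of $V_i\cap V'_i$ into pieces lying in or avoiding $V_{i-1}$ and $V_i\cap V'_{i-1}+V_{i-1}$. Your approach replaces both steps by a uniform calculation through the second-difference identity $A_{ij}=c_{ij}-c_{i-1,j}-c_{i,j-1}+c_{i-1,j-1}$ combined with the standard rank formula for interval-module multiplicities, which has the advantage of actually proving (rather than invoking) the sub-flag case, and of making the $f\leftrightarrow f'$ symmetry transparent so that ii) really is immediate from i).

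One point worth flagging: your computation pins the summand in part~i) as the interval module supported on $[i,j-1]$, i.e.\ $M_{i,j-1}$, whereas the statement as printed reads $M_{i-1,j}$. Your version is the correct one --- it is the only indexing compatible with part~ii) under transposition, and a quick check in the case $n=2$ with transversal lines (where $A_{12}=1$ and $f/(f\cap f')\simeq S_1=M_{1,1}$) confirms it. So you have incidentally corrected a typo in the statement; the paper's own proof does not resolve this because it stops short of computing the multiplicity explicitly.
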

\begin{proof}
Let $(f,f')\in \mathcal{F}\times \mathcal{F},$ and $f''=f\cap f'$ be given
by $f: \{0\}=V_0\subseteq V_1 \subseteq \cdots\subseteq V_n,$
$f':\{0\}=V'_0\subseteq V'_1 \subseteq \cdots \subseteq V'_n,$ and
$f'':\{0\}=V''_0\subseteq V''_1 \subseteq \cdots \subseteq V''_n.$
We have  $$A_{ij}=\dim\frac{V_{i-1}+V_i \cap V'_{j}}{V_{i-1}+V_i
\cap V'_{j-1}}.$$

Then for  $i<j$ let $e_B=[f,f'']$.  We have
$$B_{ij}=\dim\frac{V_{i-1}+V_i \cap V''_{j}}{V_{i-1}+V_i
\cap V''_{j-1}} = \dim\frac{V_{i-1}+V_i \cap V'_{j}}{V_{i-1}+V_i
\cap V'_{j-1}}$$ $$ = \dim \frac{V_{i-1}+V_i\cap V_j\cap V'_j}{V_{i-1}+
V_i\cap V_{j-1}\cap V'_{j-1}}=\dim\frac{V_{i-1}+V_i\cap V'_{j}}
{V_{i-1}+V_i\cap V'_{j-1}}=A_{ij}.$$ But $B_{ij}$ is the multiplicity of
$M_{i-1,j}$ as a direct summand in $f/f''$, which proves $i)$.

The proof of $ii)$ is similar.

For the diagonal of $A$, we have $$A_{ii}=\frac{V_{i-1}+V_i\cap V'_{i}}
{V_{i-1}+V_i\cap V'_{i-1}}.$$ Write $V_i\cap V'_{i}=U\oplus W$ where
$U\subseteq V_{i-1}$ and $W\cap V_{i-1}=0$. Now $W=W_1\oplus W_2$,
$W_1\subseteq V_i\cap V'_{i-1}+V_{i-1}$ and $W_2\cap (V_i\cap
V'_{i-1}+V_{i-1})=0$. Therefore $A_{ii}=\dim W_2$ which is the multiplicity
of $M_{in}\rightarrow M_{in}$ as a contractible summand in the projective
resolution $f\cap f'\rightarrow f+f'$.  This proves iii).
\end{proof}

\section{The non-negative $q$-Schur algebra}

In this section we describe the non-negative part of a $q$-Schur
algebra as a Hall algebra of projective resolutions of representations
of the linear quiver $\Lambda=\Lambda(n)$, defined in Section
\ref{linearquiver}. We also include some easy lemmas on the
computation of Hall numbers for the linear quiver which are needed
in subsequent sections. For deeper connections between Hall numbers
and the structure constants of the $q$-Schur algebra, please see
\cite{DengYang2}.

An orbit $[f,f']\in \mathcal{F}\times \mathcal{F}/\gl(V)$ with
$f'\subseteq f$ decomposes as $$[f,f']=[c\oplus g, c\oplus g']$$
where $f/f'\simeq g/g'$ and $g'\subseteq \rad g$. Such an orbit
can be viewed as a choice of a projective resolution of $g/g'$
obtained by adding a contractible summand $c$ to the minimal
projective resolution $0\rightarrow g' \rightarrow g \rightarrow
g/g' \rightarrow 0$.

The non-negative $\mathbb{Z}[q]$-subalgebra $S^+_q(n,r)$ is the
subalgebra of $S_q(n,r)$ with basis consisting of all orbits $[f,f']$ with
$f'\subseteq f$. Similarly, the non-positive $q$-Schur algebra $S^-_q(n,r)$ has
the corresponding basis of all orbits $[f',f]$ with $f'\subseteq f$.

Recall that the Hall number $h^L_{MN}$ defined by Ringel 
\cite{RingelBanach} is the number of submodules $X\subseteq L$
satisfying $X\simeq N$ and $L/X\simeq M$.

\begin{lemma} \label{Hallnumbers}
Let  $f_1\supseteq f_2\supseteq f_3$ be flags and let $e_{A}=[f_1, f_2]$,
$e_{A'}=[f_2, f_3]$, $e_{A''}=[f_1',f_3']$ with $f'_1\supseteq f'_3$,
$f_1'\simeq f_1$ and $f_3'\simeq f_3$,  $M=f_1/f_2$, $N=f_2/f_3$ and
$L=f_1'/f_3'$. Then $$g_{A,A',A''}=h_{MN}^{L}. $$
\end{lemma}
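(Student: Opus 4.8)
The plan is to show that the structure constant $g_{A,A',A''}$, which by definition counts flags $f$ with $(f_1,f)\in e_A$ and $(f,f_3)\in e_{A'}$ for a fixed pair $(f_1,f_3)\in e_{A''}$, literally coincides with the number of submodules of $L=f_1'/f_3'$ of the appropriate type. First I would fix a representative $(f_1',f_3')\in e_{A''}$ with $f_3'\subseteq f_1'$, which is legitimate because $A''$ arises from a nested pair and $e_{A''}$ is a single $\gl(V)$-orbit; I may as well rename $(f_1',f_3')$ as $(f_1,f_3)$. The counting set is then
$$S(A,A',A'')=\{f\in\mathcal{F}\mid f_3\subseteq f\subseteq f_1,\ [f_1,f]=e_A,\ [f,f_3]=e_{A'}\}.$$
The containment $f_3\subseteq f\subseteq f_1$ is forced: any $f$ counted must satisfy $f\subseteq f_1$ and $f_3\subseteq f$ because the relevant matrices $A$ and $A'$ lie in the non-negative part $S_q^+(n,r)$ (they come from nested pairs), so each $V_i\subseteq (V_1)_i$ and $(V_3)_i\subseteq V_i$ at every vertex.

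Next I would pass to the quotient $L=f_1/f_3$ and set up the bijection with submodules. Given $f$ with $f_3\subseteq f\subseteq f_1$, the quotient $X=f/f_3$ is a subrepresentation of $L=f_1/f_3$, and this assignment $f\mapsto X$ is a bijection between flags strictly between $f_3$ and $f_1$ and subrepresentations of $L$, by the usual correspondence theorem applied vertexwise. Under this correspondence $L/X\simeq f_1/f$ and $X\simeq f/f_3$. The point is then to identify the orbit conditions with the isomorphism-type conditions defining the Hall number. By Lemma \ref{orbit1} (specifically the nested-pair form stated just before it, that a nested pair $g'\subseteq g$ is determined up to isomorphism by $g$ and $g/g'$), the condition $[f_1,f]=e_A$ with $M=f_1/f_2$ forces $f_1/f\simeq M$, and the condition $[f,f_3]=e_{A'}$ with $N=f_2/f_3$ forces $f/f_3\simeq N$. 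Conversely, since $f_1$ and $f_3$ are fixed (hence $f_1\simeq f_1'$, $f_3\simeq f_3'$ automatically), the isomorphism types $f_1/f\simeq M$ and $f/f_3\simeq N$ are exactly enough, again by the nested-pair uniqueness, to pin down the orbits $[f_1,f]=e_A$ and $[f,f_3]=e_{A'}$.

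Combining these, $f\in S(A,A',A'')$ if and only if $X=f/f_3$ satisfies $L/X\simeq M$ and $X\simeq N$, which is precisely the defining condition for the submodules counted by the Hall number $h^L_{MN}$. Hence the bijection $f\mapsto X$ restricts to a bijection between $S(A,A',A'')$ and $\{X\subseteq L\mid X\simeq N,\ L/X\simeq M\}$, giving $g_{A,A',A''}=|S(A,A',A'')|=h^L_{MN}$.

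The main obstacle I anticipate is making the two equivalences ``$[f_1,f]=e_A \Leftrightarrow f_1/f\simeq M$'' airtight. This is exactly where Lemma \ref{orbit1} and its nested-pair specialization do the work: for nested pairs the orbit is determined by the top flag together with the quotient, so with $f_1$ fixed the orbit $e_A$ is determined by $M$ and vice versa. I would need to check carefully that $A$ being the matrix of the nested pair $[f_1,f_2]$ with $f_1/f_2\simeq M$ means every competitor flag $f$ yielding quotient $f_1/f\simeq M$ lands in the same orbit $e_A$ — and conversely that $e_A$ forces the quotient type — rather than merely matching dimension vectors. Once that identification is secured, the rest is the standard lattice-correspondence bookkeeping and is routine.
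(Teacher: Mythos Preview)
Your proposal is correct and follows essentially the same approach as the paper: both establish the bijection $f\mapsto f/f_3$ between $S(A,A',A'')$ and submodules $X\subseteq L$ with $X\simeq N$, $L/X\simeq M$ (the paper writes this as $f_2'\mapsto\pi(f_2')$ for the quotient map $\pi:f_1'\to L$, with inverse $X\mapsto\pi^{-1}(X)$). Your version is more explicit than the paper's about why the containments $f_3\subseteq f\subseteq f_1$ are forced and about invoking the nested-pair case of Lemma~\ref{orbit1} to translate between orbit conditions and quotient isomorphism types, but the underlying argument is identical.
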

\begin{proof}
Let $U=\{X\subseteq L\mid X\simeq N, L/X\simeq M\}.$ 
We will define two mutually inverse maps between $U$ and $S(A,A',A'')$.
Given $f'_2\in S(A,A',A'')$, we have 
the following commutative diagram of short exact sequences

$$
\xymatrix{
0\ar[r] &f'_3\ar[r] \ar[d]& f_3'\ar[d] \ar[r]&0
\ar[r]\ar[d]&0\\ 0\ar[r] & f_2'\ar[r]\ar[d] & f_1'
\ar[r]\ar[d]^{\pi}&f_1'/f_2'\ar[r]\ar[d] &0\\
0\ar[r] & f_2'/f_3'\ar[r] &L\ar[r]& f_1'/f_2'\ar[r]&0,
}
$$
with $f_1/f_2\simeq f_1'/
f_2'\simeq M$ and $f_2/f_3\simeq f_2'/f_3'\simeq N$. 

Define maps $S(A,A',A'')\rightarrow U$ by $f_2'\mapsto \pi(f_2')$ and
$U\rightarrow S(A,A',A'')$ by $X\mapsto \pi^{-1}(X)$. It is easy
to check that these two maps are mutually inverse, and so the
equality follows since $h_{MN}^{L}=|U|$.
\end{proof}

Denote the (non-twisted) Ringel-Hall algebra \cite{RingelBanach} by
$H_q(\Lambda)$. That is, $H_q(\Lambda)$ is the free $\mathbb{Z}[q]$-module
with basis isomorphism classes $[M]$ of representations of $\Lambda$ and
multiplication $$[M][N]=\sum_Lh^L_{MN}[L].$$ Mapping modules to
choices of projective resolutions induces an algebra homomorphism
$$\Theta^+: H_q(\Lambda) \ra S^+_q(n, r) \mbox{ defined by } [M]
\mapsto \sum_{\{[f,f']\mid f'\subseteq f, f/f'\simeq M\}}[f, f']$$ with kernel
spanned by those $[M]$ with the number of indecomposable direct
summands bigger than $r$ \cite{GreenR}. There is a similar map $\Theta^-:
H_q(\Lambda) \ra S^-_q(n, r)$.

As a conseqence, we have the following special cases of Corollary 4.5 in
\cite{BLM} (see also Proposition 14.1 in \cite{DDPW}). The assumptions
are as in Lemma \ref{Hallnumbers}.

\begin{corollary}
We have $$g_{A+D,  A'+D, A''+D}=h^L_{M,N},$$
for any diagonal matrix $D=\mathrm{diag}(d_1, \cdots, d_n)$ with $d_i\geq 0$.
\end{corollary}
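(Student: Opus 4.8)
The plan is to reduce the statement directly to Lemma \ref{Hallnumbers} by realising the shifted matrices $A+D$, $A'+D$, $A''+D$ through the \emph{same} chain of flags used for $A$, $A'$, $A''$, enlarged by a common contractible summand. Conceptually, adding a nonnegative diagonal $D$ to a matrix $A$ only enlarges the contractible part of the associated projective resolution, while the Hall number $h^L_{MN}$ depends only on the modules $M$, $N$, $L$ and not on any contractible summands; so the structure constant ought to be unaffected. This is the sense in which the corollary is a consequence of $\Theta^+$ being an algebra homomorphism.

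Concretely, I would start from a chain $f_1\supseteq f_2\supseteq f_3$ realising the hypotheses of Lemma \ref{Hallnumbers}, so that $[f_1,f_2]=e_A$, $[f_2,f_3]=e_{A'}$, $[f_1,f_3]=e_{A''}$, with $f_1/f_2\simeq M$, $f_2/f_3\simeq N$ and $f_1/f_3\simeq L$. Writing $D=\mathrm{diag}(d_1,\dots,d_n)$, set $c=\bigoplus_i P_i^{d_i}$ and define $\tilde f_j=f_j\oplus c$ for $j=1,2,3$. Since the same summand is added to each term, the chain $\tilde f_1\supseteq \tilde f_2\supseteq \tilde f_3$ is retained, and the quotients are unchanged: $\tilde f_1/\tilde f_2\simeq M$, $\tilde f_2/\tilde f_3\simeq N$ and $\tilde f_1/\tilde f_3\simeq L$.

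The key step is to check that adding $c$ shifts each matrix by $D$ on the diagonal and leaves the off-diagonal entries fixed, i.e. $[\tilde f_1,\tilde f_2]=e_{A+D}$, and likewise for $A'$ and $A''$. This follows from Lemma \ref{Lemma2.2}: for a pair $[f,f']$ with $f'\subseteq f$ the entries above the diagonal record multiplicities of summands $M_{i-1,j}$ in $f/f'=\tilde f/\tilde f'$, which are untouched by adding $c$ to both flags; the entries below the diagonal are zero both before and after; and each diagonal entry counts contractible summands $P_i\to P_i$ in the resolution $f'\subseteq f$, so adding the $d_i$ copies of $P_i$ in $c$ to both flags increases the $i$-th diagonal entry by exactly $d_i$.

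Having produced the enlarged chain $\tilde f_1\supseteq \tilde f_2\supseteq \tilde f_3$ with $[\tilde f_1,\tilde f_2]=e_{A+D}$, $[\tilde f_2,\tilde f_3]=e_{A'+D}$, $[\tilde f_1,\tilde f_3]=e_{A''+D}$ and quotients $M$, $N$, $L$, I would apply Lemma \ref{Hallnumbers} to this chain to conclude $g_{A+D,A'+D,A''+D}=h^L_{MN}$. The only point requiring care is the bookkeeping in the previous paragraph, and in particular confirming that $c=\bigoplus_i P_i^{d_i}$ is exactly the contractible module whose addition corresponds to the diagonal shift $D$; once this is settled the result is immediate, so I do not anticipate a genuine obstacle.
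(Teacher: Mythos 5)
Your overall strategy is the right one, and it is exactly how the paper intends this corollary to be read (the paper offers no separate proof, presenting it as an immediate consequence of Lemma \ref{Hallnumbers}): add a common contractible summand $c=\bigoplus_i P_i^{d_i}$ to all flags, use Lemma \ref{Lemma2.2} to see that this shifts each matrix by $D$ (above-diagonal entries depend only on the quotient $f/f'$, below-diagonal entries vanish when $f'\subseteq f$, and the $i$-th diagonal entry counts contractible summands $P_i\rightarrow P_i$, hence increases by exactly $d_i$), and then apply Lemma \ref{Hallnumbers} to the shifted data, whose quotients are still $M$, $N$, $L$. That bookkeeping is correct as you state it.

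There is, however, one concrete slip in your setup. The hypotheses of Lemma \ref{Hallnumbers} do \emph{not} place $e_{A''}$ on the same chain: they take $e_{A''}=[f_1',f_3']$ for a pair with $f_1'\simeq f_1$, $f_3'\simeq f_3$, $f_3'\subseteq f_1'$ and $L=f_1'/f_3'$, where $(f_1',f_3')$ need not be isomorphic to $(f_1,f_3)$. By instead demanding a single chain $f_1\supseteq f_2\supseteq f_3$ with $[f_1,f_3]=e_{A''}$ and $f_1/f_3\simeq L$, you have silently assumed $h^L_{MN}\geq 1$: such a chain exists if and only if $g_{A,A',A''}\neq 0$, which by the lemma is equivalent to $h^L_{MN}\neq 0$. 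When $h^L_{MN}=0$ the corollary still makes a nontrivial assertion, namely $g_{A+D,A'+D,A''+D}=0$, and your construction then has nothing to start from. The repair needs no new idea: keep the chain $f_1\supseteq f_2\supseteq f_3$ only for $A$ and $A'$, take the separate pair $(f_1',f_3')$ for $A''$, form $\tilde f_j=f_j\oplus c$ and $\tilde f_1'=f_1'\oplus c$, $\tilde f_3'=f_3'\oplus c$, and note that $\tilde f_1'\simeq \tilde f_1$, $\tilde f_3'\simeq \tilde f_3$ and $\tilde f_1'/\tilde f_3'\simeq L$; then Lemma \ref{Hallnumbers} applies to the shifted matrices in all cases, including the vanishing one.
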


This yields a different proof of Theorem 14.27 in \cite{DDPW},
which we restate here as follows.

\begin{theorem}
Hall algebra $H_q(\Lambda)$ is isomorphic to the algebra with basis all formal
sums $$\{\sum_{f/g\cong M}[f, g]\mid M \in \mathrm{mod} kQ\},$$ over all
decompositions of $r$ into $n$ parts and for all $r\geq 1$, and the multiplication
as in $S_q(n,r)$.
\end{theorem}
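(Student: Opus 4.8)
The plan is to show that the map
$$\Theta^+:H_q(\Lambda)\longrightarrow \coprod_{r\geq 1}\coprod_{n}S^+_q(n,r),\qquad [M]\mapsto \sum_{\{[f,f']\mid f'\subseteq f,\ f/f'\simeq M\}}[f,f'],$$
becomes an \emph{isomorphism} once we assemble all the $S^+_q(n,r)$ into a single algebra whose basis is the set of formal sums $\sum_{f/g\cong M}[f,g]$ indexed by isomorphism classes $M$. For a fixed $n$ and $r$ the map $\Theta^+$ into $S^+_q(n,r)$ has a kernel (spanned by those $[M]$ whose number of indecomposable summands exceeds $r$), so the point of the theorem is that passing to the limit over \emph{all} $r$ and $n$ removes the kernel and makes the construction faithful. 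Thus the first step is to set up the target algebra precisely: its underlying module is free on the symbols $b_M:=\sum_{f/g\cong M}[f,g]$, one for each isomorphism class $M$ of representations of $\Lambda$, and the product is declared so that each $b_M$ multiplies exactly as the corresponding element of $S_q(n,r)$ does whenever $n$ and $r$ are large enough to accommodate all the modules involved.

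Second, I would verify that this target product is well defined, i.e.\ independent of the choice of sufficiently large $n$ and $r$. This is the crux and the main obstacle. The key tool is Lemma~\ref{Hallnumbers} together with its Corollary: it tells us that for flags $f_1\supseteq f_2\supseteq f_3$ the structure constant $g_{A,A',A''}$ in $S^+_q(n,r)$ equals the Hall number $h^L_{MN}$ whenever $M=f_1/f_2$, $N=f_2/f_3$, $L=f_1/f_3$, and that $g_{A+D,A'+D,A''+D}=h^L_{MN}$ for any non-negative diagonal shift $D$. The diagonal shift is precisely what encodes adding contractible summands $c$ to a projective resolution, and increasing $n$ or $r$. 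So I would argue that multiplying two basis sums $b_M$ and $b_N$ and collecting coefficients reproduces $\sum_L h^L_{MN}\,b_L$, exactly the Hall product, and that this expression is stable under enlarging $n,r$ because the Hall numbers $h^L_{MN}$ are intrinsic to the representation theory of $\Lambda$ and do not depend on the ambient flag data. The careful point is to check that the orbits $[f,f']$ appearing on both sides match up bijectively via the projective-resolution picture of Section~\ref{linearquiver}, so that no cross terms between different $L$ are lost or double counted.

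Third, with the product shown well defined, the homomorphism property of $\Theta^+$ is immediate: by construction $b_M b_N=\sum_L h^L_{MN} b_L$ matches the defining relation $[M][N]=\sum_L h^L_{MN}[L]$ in $H_q(\Lambda)$. It remains to establish bijectivity. Surjectivity is clear since the $b_M$ are by definition the images of the basis $[M]$ and they span the target. For injectivity I would observe that distinct isomorphism classes $M$ give linearly independent formal sums $b_M$: an orbit $[f,g]$ with $f/g\simeq M$ determines $M$ up to isomorphism (this is part of the content of Lemma~\ref{orbit1} and the decomposition $[f,f']=[c\oplus g,c\oplus g']$ recorded before Lemma~\ref{Hallnumbers}), so the supports of $b_M$ for different $M$ are disjoint and no nontrivial linear combination can vanish. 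Hence $\Theta^+$ is a bijective algebra homomorphism, i.e.\ an isomorphism.

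I expect the genuine difficulty to lie entirely in the second step, namely making rigorous that the product on the limit algebra is independent of the stabilisation parameters and that the Hall product is recovered on the nose. Everything hinges on the diagonal-shift invariance of the structure constants supplied by the Corollary to Lemma~\ref{Hallnumbers}, so I would state that invariance carefully and reduce the well-definedness to it rather than recomputing structure constants by hand. The injectivity and the homomorphism property are then routine bookkeeping once the limit is correctly formulated.
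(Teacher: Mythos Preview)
Your proposal is correct and follows essentially the same approach as the paper: the paper gives no detailed proof but simply remarks that the theorem follows from the diagonal-shift invariance $g_{A+D,A'+D,A''+D}=h^L_{MN}$ (the Corollary to Lemma~\ref{Hallnumbers}), which is precisely the key step you isolate and flesh out. Your treatment of well-definedness, the homomorphism property, and bijectivity is a faithful elaboration of what the paper leaves implicit.
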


As before, let $M_{ij}$ be the indecomposable representation of $\Lambda$
supported on the interval $[i,j]$. Let $M_{ij}\leq M_{i'j'}$ if $j<j'$ or
$i\leq i'$ if $j=j'$. This is a total order on the indecomposable representations
of $\Lambda$. Observe that if $M_{ij}\leq M_{i'j'}$ then
$Ext^1(M_{i'j'},M_{ij})=0$, but that the converse is not true in general.
We state a lemma which follows easily from Lemma \ref{Hallnumbers} and the 
corresponding computations in the Ringel-Hall algebra.

\begin{lemma} \label{posmult}
Suppose flags $f\supseteq g$ with $f/g=\oplus_{ij} M_{ij}^{m_{ij}}$. Then
there exists a filtration $f=f_n \supset f_{n-1} \supset \cdots \supset f_1=g \supset 0$
with indecomposable factors $f_i/f_{i-1}\leq
f_{i+1}/f_i$ for all $i$, such that $$[f_n,f_{n-1}]\cdots [f_2,f_1]=[f_n, f_1]\prod
[m_{ij}]!$$ and $m_{ij}$ is the multiplicity of $M_{ij}$ as a subfactor
in the filtration.
\end{lemma}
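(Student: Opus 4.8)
The plan is to reduce the statement to a computation in the Ringel--Hall algebra $H_q(\Lambda)$ and transport it back to $S^+_q(n,r)$ via Lemma \ref{Hallnumbers}. First I would work entirely on the level of the module $f/g=\bigoplus_{ij}M_{ij}^{m_{ij}}$. The key structural fact is the total order $M_{ij}\leq M_{i'j'}$ defined just before the statement, together with the observation that $M_{ij}\leq M_{i'j'}$ forces $\ext^1(M_{i'j'},M_{ij})=0$. This means that if we list the indecomposable summands of $f/g$ in increasing order with respect to $\leq$, say $N_1\leq N_2\leq\cdots\leq N_m$ (with multiplicities), then there are no extensions from a later factor into an earlier one, so every composition-type filtration of $f/g$ with these factors in this prescribed order actually splits compatibly. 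Concretely, $f/g$ admits a filtration $0\subset L_1\subset L_2\subset\cdots\subset L_m=f/g$ with $L_i/L_{i-1}\simeq N_i$, and I would pull this back along the surjection $f\twoheadrightarrow f/g$ to obtain the desired flag filtration $g=f_1\subset f_2\subset\cdots\subset f_n=f$ with $f_i/f_{i-1}\simeq N_i$ and $f_i/f_{i-1}\leq f_{i+1}/f_i$.

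Next I would compute the product $[f_n,f_{n-1}]\cdots[f_2,f_1]$ in $S^+_q(n,r)$ by repeatedly applying Lemma \ref{Hallnumbers}, which identifies each structure constant $g_{A,A',A''}$ with a Hall number $h^L_{MN}$. Iterating, the coefficient of $[f_n,f_1]$ in the product is the Hall number counting filtrations of the fixed module $L=f/g$ whose successive quotients are $N_1,\dots,N_m$ in this order, i.e. it equals the corresponding iterated Hall number $\prod h^{\bullet}_{\bullet\bullet}$ in $H_q(\Lambda)$. Because the factors are ordered increasingly and higher extensions vanish in the relevant directions, this iterated Hall number collapses: counting ordered filtrations of $\bigoplus_{ij}M_{ij}^{m_{ij}}$ by the $M_{ij}$ in increasing order amounts to independently ordering the $m_{ij}$ copies of each isotypic block $M_{ij}^{m_{ij}}$, and for a fixed indecomposable the number of full flags in $M_{ij}^{m_{ij}}$ with all factors $M_{ij}$ is exactly $[m_{ij}]!=[m_{ij}][m_{ij}-1]\cdots[1]$, the Gaussian factorial. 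Multiplying over all $(i,j)$ yields the factor $\prod[m_{ij}]!$ asserted in the statement.

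The main obstacle I anticipate is justifying that the iterated Hall number factors cleanly as $\prod[m_{ij}]!$ with no cross terms between distinct isotypic blocks and no stray coefficients. This requires two inputs: first, that when one passes from an arbitrary summand $N_i=M_{ij}$ to the next one in the order, no ``mixing'' occurs between summands of different isomorphism type because of the vanishing of $\ext^1(N_{i'},N_i)$ for $i'>i$ in the admissible directions, so that the only filtrations contributing are those refining the isotypic decomposition; and second, the standard Hall-algebra identity $[M_{ij}]^{m}=[m]!\,[M_{ij}^{m}]+(\text{lower-order terms})$ for a single indecomposable, where the coefficient $[m]!$ arises as the number of complete $M_{ij}$-flags in $M_{ij}^{m}$ over a field with $q$ elements. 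I would verify this single-block computation directly (it is the $q$-analogue of the fact that a flag in an $m$-dimensional space is counted by the Gaussian factorial), and then assemble the blocks using the extension-vanishing to guarantee the product of the per-block counts is the whole Hall number. Once these two points are in place, applying $\Theta^+$ and Lemma \ref{Hallnumbers} term by term gives the claimed identity in $S^+_q(n,r)$, and hence in $S_q(n,r)$.
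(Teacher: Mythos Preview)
Your proposal is correct and follows exactly the route the paper indicates: reduce via Lemma~\ref{Hallnumbers} to a product in the Ringel--Hall algebra $H_q(\Lambda)$, then use the vanishing $\ext^1(M_{i'j'},M_{ij})=0$ for $M_{ij}\leq M_{i'j'}$ to see that $[N_m]\cdots[N_1]$ is a single term supported on $[\bigoplus N_l]$ with coefficient $\prod[m_{ij}]!$. One small sharpening: in your single-block identity you hedge with ``$+\ (\text{lower-order terms})$'', but since $\ext^1(M_{ij},M_{ij})=0$ for every indecomposable over $\Lambda$, there are none---$[M_{ij}]^{m}=[m]!\,[M_{ij}^{m}]$ on the nose, and likewise (using that $\hom(N,M)=0$ whenever $N<M$, which you should check alongside the $\ext^1$-vanishing) the cross-block Hall numbers are exactly $1$, so the equality in the lemma is exact rather than merely a leading-term statement.
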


\section{Bases of $S_q(n,r)$}

In this section we describe a basis for $S_q(n,r)$ using the non-negative
and non-positive subalgebras defined in the previous section. Let $$\mathcal{B}
=\{[f_1, f_1+f_2][f_1+f_2,f_2]\mid [f_1,f_2]\in \mathcal{F}\times
\mathcal{F}/\gl(V) \}.$$ Lemma \ref{orbit1} shows that the map
$\mathcal{F}\times \mathcal{F}/\gl(V)\ra \mathcal{B}$ given by
$$[f_1, f_2]\mapsto [f_1, f_1+f_2][f_1+f_2, f_2]$$ is a well defined
surjection of sets.
We shall see that $\mathcal{B}$ is a $\mathbb{Z}[q]$-basis
of $S_q(n,r)$. Note that there is a similar basis $\mathcal{B}'$ of $S_q(n,r)$ consisting
of elements of the form $[f_1,f_1\cap f_2][f_1\cap f_2,f_2]$.

\begin{lemma}\label{bases1}
Let $(f_1, f_2)\in \mathcal{F}\times\mathcal{F}$ and let $e_{A}=
[f_1, f_1+f_2]$ and $e_{A'}=[f_1+f_2, f_2]$. Then $$[f_1, f_1+f_2]
[f_1+f_2, f_2]=[f_1, f_2]+\sum_{\{e_{A''}=[f_1',f_2']\mid f_1'+f_2'
\subsetneq f_1+f_2\}}g_{A,A',A''}[f_1', f_2'].$$
\end{lemma}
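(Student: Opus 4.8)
The plan is to compute the product $[f_1,f_1+f_2][f_1+f_2,f_2]$ directly from the structure constants $g_{A,A',A''}$ of $S_q(n,r)$ and to identify $[f_1,f_2]$ as the unique term whose coefficient is $1$ and whose second-flag-sum is maximal. Writing $e_A=[f_1,f_1+f_2]$ and $e_{A'}=[f_1+f_2,f_2]$, by definition the product equals $\sum_{e_{A''}}g_{A,A',A''}\,e_{A''}$, where the sum ranges over all orbits $e_{A''}=[f_1',f_2']$. The first task is to show that a term $e_{A''}$ can occur only when $f_1'+f_2'\subseteq f_1+f_2$, so that the sum splits into the diagonal term $[f_1,f_2]$ plus terms with $f_1'+f_2'\subsetneq f_1+f_2$.

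First I would unwind the meaning of $g_{A,A',A''}$ using the set $S(A,A',A'')$. Recall that for a fixed pair $(f_1',f_2')\in e_{A''}$, the constant $g_{A,A',A''}$ counts flags $f$ with $(f_1',f)\in e_A$ and $(f,f_2')\in e_{A'}$. Because $e_A=[f_1,f_1+f_2]$ has $f_1\subseteq f_1+f_2$ and $e_{A'}=[f_1+f_2,f_2]$ has $f_2\subseteq f_1+f_2$, any such intermediate flag $f$ must satisfy $f_1'\subseteq f$ and $f_2'\subseteq f$, hence $f_1'+f_2'\subseteq f$. The orbit conditions then force $f\simeq f_1+f_2$, and tracking the containments $f_1'+f_2'\subseteq f$ shows that whenever $g_{A,A',A''}\neq 0$ one has $f_1'+f_2'\subseteq f_1+f_2$ up to isomorphism; equality $f_1'+f_2'= f_1+f_2$ singles out the leading orbit. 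This is the step where I would lean on Lemma \ref{orbit1}, which guarantees that a pair $(f_1',f_2')$ is determined by the isomorphism types of $f_1'+f_2'$ together with the two quotients, so that the containment of sums is genuinely the right comparison parameter.

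Next I would verify that the leading orbit, the one with $f_1'+f_2'=f_1+f_2$, is exactly $[f_1,f_2]$ with coefficient $1$. For this I take $f=f_1+f_2$ itself: then $(f_1,f_1+f_2)\in e_A$ and $(f_1+f_2,f_2)\in e_{A'}$ trivially, so $(f_1,f_2)$ is a valid pair in the product and $f_1+f_2$ is its sum of flags, placing it in the leading stratum. To see the coefficient is $1$, I would argue that in the stratum $f_1'+f_2'=f_1+f_2$ the only intermediate flag $f$ available is $f_1+f_2$, since $f\supseteq f_1'+f_2'=f_1+f_2$ and $f\simeq f_1+f_2$ force $f=f_1+f_2$; hence $S(A,A',A'')$ for this orbit is a single point. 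Combined with Lemma \ref{orbit1} identifying the orbit as $[f_1,f_2]$, this gives the isolated term $[f_1,f_2]$.

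The main obstacle I anticipate is the bookkeeping in the first step: rigorously showing that $g_{A,A',A''}\neq 0$ forces $f_1'+f_2'\subseteq f_1+f_2$, and doing so at the level of orbits rather than fixed representatives. The subtlety is that $S(A,A',A'')$ is defined for a single chosen pair $(f_1',f_2')\in e_{A''}$, so I must ensure the containment statement is invariant under the $\gl(V)$-action and phrased correctly up to isomorphism; this is precisely what Lemma \ref{orbit1} is meant to control, by reducing everything to the isomorphism types of the sums and quotients. Once the containment and the uniqueness of the leading point are in hand, the remaining terms automatically satisfy $f_1'+f_2'\subsetneq f_1+f_2$, and the stated formula follows by collecting them into the residual sum.
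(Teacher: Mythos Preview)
Your proposal is correct and follows essentially the same argument as the paper: you unwind the structure constants via $S(A,A',A'')$, observe that any intermediate $f$ satisfies $f_1',f_2'\subseteq f$ with $f\simeq f_1+f_2$, use Lemma~\ref{orbit1} to pin down the maximal-sum term as $[f_1,f_2]$, and check its coefficient is $1$ because $f\supseteq f_1'+f_2'$ with $f\simeq f_1'+f_2'$ forces $f=f_1'+f_2'$. Your explicit justification of the coefficient $1$ is in fact slightly more detailed than the paper's, which simply asserts it.
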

\begin{proof}
Suppose that $[f_1',f_2']$  is one of the terms with a non-zero coefficient
in the sum $$[f_1, f_1+f_2][f_1+f_2, f_2]=\sum g_{A,A',A''}[f_1', f_2'].$$
Then there exists an $f\in \mathcal{F}$ such
that $(f_1', f)\simeq (f_1, f_1+f_2)$ and $(f, f_2')\simeq
(f_1+f_2, f_2)$. Thus $f_1', f_2'\subseteq f$ and so $f_1'+f_2'
\subseteq f$. Note that if $f_1'+f_2'=f$, then $(f_1', f)=(f_1', f_1'+f_2')
\simeq (f_1, f_1+f_2) \mbox{ and }$ $(f, f_2')=(f_1'+f_2', f_2')
\simeq (f_1+f_2, f_2).$ Therefore $(f_1'+f_2')/f_i'\simeq
(f_1+f_2)/f_i$ for $i=1, 2$. By Lemma \ref{orbit1}, $(f_1', f_2')
\simeq (f_1, f_2)$. Moreover $g_{A,A',A''}=1$ for $e_{A}=[f_1, f_2]$.
The lemma follows.
\end{proof}

There is a similar
formula for the product $[f_1, f_1\cap f_2]
[f_1\cap f_2, f_2]$.

\begin{theorem}\label{2basis}
The set $\mathcal{B}$ is a $\mathbb{Z}[q]$-basis of $S_q(n, r)$.
\end{theorem}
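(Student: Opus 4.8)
The plan is to show that the map $[f_1,f_2]\mapsto [f_1,f_1+f_2][f_1+f_2,f_2]$ induces a linear change of basis on $S_q(n,r)$, so that $\mathcal{B}$ is a basis because $\mathcal{F}\times\mathcal{F}/\gl(V)$ is. By Lemma \ref{bases1}, each product $[f_1,f_1+f_2][f_1+f_2,f_2]$ equals $[f_1,f_2]$ plus a $\mathbb{Z}[q]$-combination of orbits $[f_1',f_2']$ with the strictly smaller sum condition $f_1'+f_2'\subsetneq f_1+f_2$. The essential point is that this expresses the transition matrix as unitriangular with respect to a suitable partial order on orbits.

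First I would fix a partial order on the index set $\mathcal{F}\times\mathcal{F}/\gl(V)$ by declaring $[f_1',f_2']\prec[f_1,f_2]$ whenever $f_1'+f_2'\subsetneq f_1+f_2$ (as isomorphism classes of flags, equivalently whenever $\dim(f_1'+f_2')_i\le \dim(f_1+f_2)_i$ for all $i$ with strict inequality somewhere); this is a genuine partial order since containment of sums descends through a finite poset and cannot cycle. Lemma \ref{bases1} then says precisely that
\[
[f_1,f_1+f_2][f_1+f_2,f_2]=[f_1,f_2]+\sum_{[f_1',f_2']\prec[f_1,f_2]}g_{A,A',A''}[f_1',f_2'],
\]
so the matrix expressing the elements of $\mathcal{B}$ in terms of the standard basis $\{e_{A''}\}$ is lower-triangular with $1$'s on the diagonal, once we order the standard basis so that $\prec$-smaller orbits come first. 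A finite unitriangular matrix over $\mathbb{Z}[q]$ is invertible over $\mathbb{Z}[q]$ (its determinant is $1$), so $\mathcal{B}$ spans $S_q(n,r)$ and is $\mathbb{Z}[q]$-linearly independent, hence is a basis.

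For the triangularity argument to be legitimate I must check that the map $\mathcal{F}\times\mathcal{F}/\gl(V)\to\mathcal{B}$ is a bijection, not merely the surjection noted before Lemma \ref{bases1}; here the unitriangular form is itself the proof, since a unitriangular transition matrix forces $|\mathcal{B}|=|\mathcal{F}\times\mathcal{F}/\gl(V)|$ and the spanning set to be a basis, which in turn shows the defining map has no collisions. The main obstacle I anticipate is being careful that $\prec$ is well-founded and that every orbit $[f_1',f_2']$ appearing with nonzero coefficient genuinely satisfies $f_1'+f_2'\subsetneq f_1+f_2$ rather than merely $f_1'+f_2'\subseteq f_1+f_2$; this is exactly the content already extracted in Lemma \ref{bases1}, whose proof shows that equality of the sums forces $[f_1',f_2']=[f_1,f_2]$ with coefficient $1$. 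Given that input, the remaining work is the purely formal invertibility of a unitriangular integral-polynomial matrix, so no further computation of the structure constants $g_{A,A',A''}$ is needed.
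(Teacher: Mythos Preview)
Your proposal is correct and matches the paper's own argument: the paper likewise uses Lemma~\ref{bases1} together with induction on the size of $f_1+f_2$ to show that $\mathcal{B}$ spans, and then concludes by comparing cardinalities with the standard orbit basis. Your formulation via a unitriangular transition matrix is just a repackaging of that same induction, with the bijectivity of $[f_1,f_2]\mapsto[f_1,f_1+f_2][f_1+f_2,f_2]$ falling out exactly as you describe.
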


\begin{proof}
By induction on the size of $f_1+f_2$ and Lemma \ref{bases1},
any basis element $[f_1, f_2]$ can be written as a
$\mathbb{Z}[q]$-linear combination of elements in $\mathcal{B}$.
This proves that $\mathcal{B}$ spans $S_q(n,r)$ as a
$\mathbb{Z}[q]$-module. Then $\mathcal{B}$ is in bijection with
the basis $\mathcal{F}\times\mathcal{F}/\gl(V)$, which proves that
$\mathcal{B}$ is a basis of $S_q(n,r)$.
\end{proof}

Each basis element in $\mathcal{B}$ has a decomposition
$$[c\oplus f'_1 \oplus f_2, c \oplus f_1\oplus f_2][c\oplus f_1 \oplus
f_2,c\oplus f_1\oplus f'_2]$$ where $f_1'\subseteq \rad f_1$ and
$f_2'\subseteq \rad f_2$.

\begin{lemma}
Let $f_1\subseteq g \supseteq f_2$ be flags in $\mathcal{F}$.
Then $(f_1,g)\simeq (h_1,h_1+h_2)$ and $(g,f_2)\simeq (h_1+h_2,h_2)$
for a pair of flags $h_1,h_2\in \mathcal{F}$,  if and only if there is a
surjective map $g \rightarrow g/f_1 \oplus g/f_2$.
\end{lemma}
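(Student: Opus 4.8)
The plan is to deduce the statement from the bijection, recorded just after Lemma \ref{orbit1}, between $\gl(V)$-orbits of pairs and triples $(\underline d,[M],[N])$ admitting a surjection $P(\underline d)\to M\oplus N$. First I would rewrite the two prescribed isomorphisms in terms of quotients. Since $g$ is a flag, it is projective, so $g\simeq P(\underline d)$ where $\underline d$ is its dimension type. Using the remark preceding Lemma \ref{orbit1} (isomorphism of a pair in which one flag contains the other is detected by the larger flag together with the quotient), the conditions $(f_1,g)\simeq(h_1,h_1+h_2)$ and $(g,f_2)\simeq(h_1+h_2,h_2)$ are together equivalent to the three isomorphisms $g\simeq h_1+h_2$, $(h_1+h_2)/h_1\simeq g/f_1$ and $(h_1+h_2)/h_2\simeq g/f_2$. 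Thus the existence of the required $h_1,h_2$ amounts to realising the triple $(\underline d,[g/f_2],[g/f_1])$ by a pair.

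For the direction from the surjection to the pair, I would take a surjection $g\to g/f_1\oplus g/f_2$ and read it, via $g\simeq P(\underline d)$, as a surjection $\phi\colon P(\underline d)\to M\oplus N$ with $M=g/f_2$ and $N=g/f_1$. Feeding $\phi$ into the explicit construction following Lemma \ref{orbit1} produces a pair $(h_1,h_2)$ with $h_1+h_2\simeq P(\underline d)\simeq g$, $(h_1+h_2)/h_2\simeq M=g/f_2$ and $(h_1+h_2)/h_1\simeq N=g/f_1$; by the reformulation above these are exactly the two desired isomorphisms.

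For the converse I would start from $h_1,h_2$ satisfying the two isomorphisms and exhibit the surjection directly on $h_1+h_2$. The natural map $h_1+h_2\to (h_1+h_2)/h_1\oplus(h_1+h_2)/h_2$, $x\mapsto(x+h_1,x+h_2)$, is a homomorphism of representations, and at each vertex $i$ it is surjective because $(h_1)_i+(h_2)_i=(h_1+h_2)_i$ by the definition of the sum of flags. Transporting this map through the isomorphism $g\simeq h_1+h_2$ and the induced quotient isomorphisms $g/f_1\simeq(h_1+h_2)/h_1$, $g/f_2\simeq(h_1+h_2)/h_2$ then yields a surjection $g\to g/f_1\oplus g/f_2$.

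The one point requiring care, and the main obstacle, is that the two pair-isomorphisms are witnessed by possibly different elements of $\gl(V)$, so the single surjection from $g$ cannot be obtained by transporting along one of them alone. I would resolve this by building the surjection on $h_1+h_2$ first, where both quotients appear simultaneously, and only afterwards identifying the first summand with $g/f_1$ using the isomorphism from $(f_1,g)\simeq(h_1,h_1+h_2)$ and the second summand with $g/f_2$ using the isomorphism from $(g,f_2)\simeq(h_1+h_2,h_2)$; post-composing the surjection with this direct sum of isomorphisms keeps it surjective.
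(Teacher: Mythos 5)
Your argument is correct. The converse direction coincides with the paper's: both exhibit the canonical surjection $h_1+h_2\to (h_1+h_2)/h_1\oplus (h_1+h_2)/h_2$ and transport it along the given isomorphisms, and your closing remark --- that the two pair-isomorphisms need not be witnessed by a single group element, but that this is harmless because only the existence of \emph{some} surjection $g\to g/f_1\oplus g/f_2$ is required --- is exactly the right resolution, implicit in the paper's one-line converse. The forward direction is where you genuinely diverge. The paper stays inside $g$: from the surjection $\phi\colon g\to g/f_1\oplus g/f_2$ it defines $h_1=\phi^{-1}(g/f_2)$ and $h_2=\phi^{-1}(g/f_1)$ (preimages of the two summands), so that $g/h_1\simeq g/f_1$, $g/h_2\simeq g/f_2$, and, by surjectivity, $h_1+h_2=\phi^{-1}(g/f_1\oplus g/f_2)=g$ on the nose; the two required pair-isomorphisms then follow from the criterion for pairs in which one flag contains the other. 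You instead translate both pair-isomorphisms into the three quotient conditions $g\simeq h_1+h_2$, $(h_1+h_2)/h_1\simeq g/f_1$, $(h_1+h_2)/h_2\simeq g/f_2$, and realise the triple $(\underline d,[g/f_2],[g/f_1])$ via the projective-cover construction following Lemma \ref{orbit1}. Both routes are valid. The paper's preimage construction is shorter, avoids the projective-cover machinery, and produces honest subflags $h_1,h_2\subseteq g$ with $h_1+h_2=g$ exactly; your route buys a conceptual reduction, making transparent that the lemma is precisely a statement about which triples are realisable by pairs, at the cost of invoking the heavier (though already established) construction of Section 2.
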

\begin{proof}
Assume that $\phi:g \rightarrow g/f_1 \oplus g/f_2$ is surjective.
Let $h_1=\phi^{-1}(g/f_2)$ and $h_2=\phi^{-1}(g/f_1)$. Then
$(h_1, g)\simeq (f_1, g)$ and $(g, h_2) \simeq (g, f_2)$ and
$g=\phi^{-1}(g/f_1\oplus g/f_2)=h_1+h_2$.

The converse follows since the map $\pi:h_1+h_2\rightarrow
(h_1+h_2)/h_1\oplus (h_1+h_2)/h_2$ is surjective.
\end{proof}

In particular the lemma shows that a surjective map $g \rightarrow
g/f_1 \oplus g/f_2$ implies $[f_1,g][g,f_2]\in \mathcal{B}$. The following
example shows that the converse is not true.

\begin{example}
Let $n=2$, $r=2$, $V=span\{x_1,x_2\}$, $f_i:0\subseteq kx_i$ for $i=1,2$, and $g:V\subseteq V$. Then $$[f_1,g][g,f_2]=[f_1,f_2]+[f_1,f_1]=[f_1,f_1+f_2][f_1+f_2,f_2]\in \mathcal{B},$$ with no surjective map $g\rightarrow g/f_1\oplus g/f_2$. In this case $g\not \simeq f_1+f_2$.
\end{example}

\section{Quiver and relations for $q$-Schur algebras}

In this section we present an algebra using quivers and binomial relations,
which will be shown to be the $0$-Schur algebra in Section \ref{real}. This 
will lead to a presentation of the $q$-Schur over a 
base ring where $q$ is not invertible. Also, following from the relations,  
the $0$-Schur algebra has a multiplicative basis of paths which will 
be constructed geometrically in Section \ref{genmult} and \ref{real}. 

As before, let $D(n, r)$ be the set of decompositions of $r$ into $n$
parts. For $\underline d\in D(n,r)$, we define $\underline d+\alpha_i$ by
$$(\underline d+\alpha_i)_j=\left\{\begin{matrix} d_i+1,
\mbox{  if } i=j,  \\  d_j,  \mbox{  if } i\neq j.\end{matrix}\right.$$

Let $\Sigma(n,r)$ be the quiver with vertices $K_{\underline d}$ and
arrows $E_{i,\underline d}$ and $F_{i,\underline d}$,
$$\xymatrix{K_{\underline d+\alpha_i-\alpha_{i+1}} \;
\ar@/^15pt/[rr]^{F_{i,\underline d+\alpha_i-\alpha_{i+1}}} & \;  \;  \; \; 
\;  \;  \;  \;  \; \;  \;  \;  & \ar@/^15pt/[ll]^{E_{i,\underline d} } \;  \; \;
\;  \;  \;   K_{\underline d}}$$ where $\underline d, \underline d+{\alpha_i}-
\alpha_{i+1} \in D(n,r)$. The vertices can be drawn on a simplex,
where the vertices $K_{\underline d}$ with $d_i=0$ for some $i=0$
are on the boundary, and vertices $K_{\underline  d}$ with
$d_i\neq 0$ for all $i$ are in the interior of the simplex.

To simplify our formulas we define $$E_{i,\underline d}=0 \mbox{ if }
\left\{\begin{matrix}\underline d \not \in D(n,r), \\ i=n, \mbox{ or }
\\ d_{i+1}=0 \end{matrix}\right.,$$

$$F_{i,\underline d}=0 \mbox{ if }
\left\{\begin{matrix}\underline d \not \in D(n,r), \\ i=n, \mbox{ or }
\\ d_{i}=0 \end{matrix}\right.,$$ $$K_{\underline d}=0 \mbox{ if }
\underline d\not \in D(n,r),$$ $$E_i =\sum_{\underline d}
E_{i,\underline d}\mbox{ and } F_i=\sum_{\underline d} F_{i,\underline d}.$$

For a commutative ring $R$, denote by $R\Sigma(n,r)$ the path $R$-algebra
of $\Sigma(n,r)$, which is the free $R$-module with basis all paths in
$\Sigma(n,r)$, and multiplication given by composition of paths. The vertices
$K_d$ form an orthogonal set of idempotents in $R\Sigma(n,r)$ and the
composition of two paths $p$ and $q$ is $pq$, if $q$ ends where $p$
starts, and zero otherwise. Recall that a relation in $R\Sigma(n,r)$ is a
$R$-linear combination of paths with common starting and ending vertex
$$\rho = \sum_i r_ip_i, r_i\in R, p_i \mbox{ a path.}$$

Let $I(n,r)\subseteq \mathbb{Z}[q]\Sigma(n,r)$ be the ideal generated
by the relations
$$P_{ij, \underline d}=K_{\underline d+p_{ij}}P_{ij}K_{\underline d},$$
$$N_{ij, \underline d}=K_{\underline d-p_{ij}}N_{ij}K_{\underline d}, \mbox{ and }$$ $$C_{ij,\underline d}=
K_{\underline d+\alpha_{i}+\alpha_{j+1}-\alpha_{i+1}-\alpha_j}
C_{ij}K_{\underline d},$$ where $$p_{ij}=\left\{ \begin{matrix} 2\alpha_i+
\alpha_j-2\alpha_{i+1}-\alpha_{j+1}, \mbox{ if } |i-j| = 1, \\ \alpha_i+
\alpha_j-\alpha_{i+1}-\alpha_{j+1}, \mbox{ if } |i-j| > 1; \end{matrix}\right.$$
$$P_{ij}=\left\{  \begin{matrix} E_i^2E_j-(q+1)
E_iE_jE_i+q E_jE^2_{i} \mbox{ for } i=j-1, \\ qE_i^2E_j-(q+1)
E_iE_jE_i+ E_jE^2_{i} \mbox{ for } i=j+1, \\ E_{i}E_{j}-E_{j} E_{i},
\mbox{ otherwise;}\end{matrix}\right.$$ $$N_{ij}=\left\{  \begin{matrix}
qF_i^2F_j-(q+1)F_iF_jF_i+ F_jF^2_{i} \mbox{ for }  i=j-1, \\ F_i^2F_j-(q+1)
F_iF_jF_i+ qF_jF^2_{i} \mbox{ for } i=j+1, \\ F_{i}F_{j}-F_{j} F_{i},
\mbox{ otherwise; } \end{matrix}\right. $$ and $$C_{ij}=E_{i}F_{j}-
F_{j}E_{i}-\delta_{ij}\sum_{\underline d}\frac{q^{d_i}-q^{d_{i+1}}}{q-1}
K_{\underline d}.$$

Let $$e_{i,\underline d}=[f,f'],
f_{i,\underline d+\alpha_i-\alpha_{i+1}}=[f',f] \mbox{ and }
k_{\underline d}=[h,h]$$ where $(f,f')\in \mathcal{F}\times
\mathcal{F}$ with $f'\subseteq f$, $f/f'\simeq S_i$, and $f',h\in
\mathcal{F}_{\underline d}$.

\begin{lemma}\label{startlemma}
There is a homomorphism of $\mathbb{Z}[q]$-algebras $$\phi:\mathbb{Z}[q]\Sigma(n,r)
/I(n,r)\rightarrow S_q(n, r)$$ defined by $$\phi(E_{i,\underline d})=e_{i,\underline d},
\phi(F_{i,\underline d})=f_{i,\underline d} \mbox{ and }
\phi(K_{\underline d})=k_{\underline d}.$$
\end{lemma}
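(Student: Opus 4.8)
The plan is to construct $\phi$ in two stages: first realise it as an algebra homomorphism out of the \emph{full} path algebra $\mathbb{Z}[q]\Sigma(n,r)$, and then show that this homomorphism annihilates the ideal $I(n,r)$, so that it descends to the quotient $\mathbb{Z}[q]\Sigma(n,r)/I(n,r)$.

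For the first stage I would invoke the universal property of the path algebra of the finite quiver $\Sigma(n,r)$: an algebra homomorphism into $S_q(n,r)$ is determined by sending the vertices to orthogonal idempotents that sum to the identity, and each arrow to an element living in the corner cut out by its source and target idempotents. So I would check two things. (a) The diagonal orbits $k_{\underline d}=[h,h]$, $h\in\mathcal{F}_{\underline d}$, are orthogonal idempotents with $\sum_{\underline d}k_{\underline d}=1$ in $S_q(n,r)$; this is immediate from the definition of the multiplication, since $[h,h][h',h']$ is nonzero only when $h$ and $h'$ have the same type, and the sum of all diagonal orbits is the unit of $S_q(n,r)$. (b) Each arrow lands in the correct corner: one needs $e_{i,\underline d}=k_{\underline d+\alpha_i-\alpha_{i+1}}\,e_{i,\underline d}\,k_{\underline d}$ and $f_{i,\underline d}=k_{\underline d-\alpha_i+\alpha_{i+1}}\,f_{i,\underline d}\,k_{\underline d}$. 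Since $e_{i,\underline d}=[f,f']$ has $f'\subseteq f$ with $f/f'\simeq S_i$, the type of $f'$ is $\underline d$ and the type of $f$ is $\underline d+\alpha_i-\alpha_{i+1}$, which is exactly (b); the condition for $f_{i,\underline d}$ is symmetric.

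For the second stage I must show $\phi(P_{ij,\underline d})=\phi(N_{ij,\underline d})=\phi(C_{ij,\underline d})=0$. As each relation already carries its idempotent cut-offs $K_{\bullet}$ and $\phi$ is multiplicative, it suffices to verify the ``global'' identities $\phi(P_{ij})=\phi(N_{ij})=\phi(C_{ij})=0$, i.e.\ the quantum Serre relations among the elements $e_i=\sum_{\underline d}e_{i,\underline d}$, the analogous relations among the $f_i$, and the commutator relation tying $e_if_j-f_je_i$ to the stated diagonal term. The key tool is Lemma \ref{Lemma3.2}: the matrix of $e_{i,\underline d}$ is $\mathrm{diag}(\underline d)-E_{i+1,i+1}+E_{i,i+1}$, so $e_{i,\underline d}$ is of the type $e_B$ with $B-E_{i,i+1}$ diagonal, and dually $f_{i,\underline d}$ is of the type $e_C$ with $C-E_{i+1,i}$ diagonal. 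Hence the BLM formulas compute $e_ie_A$ and $f_ie_A$ explicitly for every basis element $e_A$. I would therefore evaluate both sides of each relation on an arbitrary $e_A$, expand using Lemma \ref{Lemma3.2}, and verify the cancellations among the resulting coefficients, which are products of quantum integers $[A_{i,p}+1]$ with powers of $q$.

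The main obstacle is this computational verification, and especially the commutator relation $C_{ij}$. For it I must compute $e_if_je_A$ and $f_je_ie_A$ by two successive applications of Lemma \ref{Lemma3.2}, show that for $i\neq j$ the two expansions agree term by term (so $[e_i,f_j]=0$), and that for $i=j$ the difference collapses, after the quantum-integer and $q$-power bookkeeping, to exactly $\sum_{\underline d}\frac{q^{d_i}-q^{d_{i+1}}}{q-1}\,k_{\underline d}$ acting on $e_A$. The Serre relations $P_{ij}$ and $N_{ij}$ are handled the same way but are more routine: when $|i-j|>1$ the row operations at $\{i,i+1\}$ and $\{j,j+1\}$ have disjoint support, so $e_i$ and $e_j$ commute cleanly, while for $|i-j|=1$ the three-term combination is the standard quantum Serre identity, whose vanishing again follows from the explicit coefficients produced by Lemma \ref{Lemma3.2}.
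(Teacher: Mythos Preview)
Your proposal is correct and follows essentially the same approach as the paper: the paper's proof is the single sentence ``By Lemma~\ref{Lemma3.2}, the relations $P_{ij}$, $N_{ij}$ and $C_{ij}$ hold in $S_q(n,r)$, and so $\phi$ is an algebra homomorphism,'' and your plan simply unpacks this---first handling the path-algebra universal property (which the paper takes for granted) and then reducing the verification of the generating relations to the explicit BLM multiplication formulas of Lemma~\ref{Lemma3.2}. The paper also remarks that the Serre-type relations $P_{ij}$ and $N_{ij}$ can alternatively be obtained via the Hall-algebra route (Lemma~\ref{Hallnumbers} together with Ringel's computation in \cite{RingelInv}), which would spare you the direct three-term check for $|i-j|=1$, but your direct computation from Lemma~\ref{Lemma3.2} is equally valid.
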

\begin{proof}
By Lemma \ref{Lemma3.2}, the relations $P_{ij}$, $N_{ij}$
and $C_{ij}$ hold in $S_q(n,r)$, and so $\phi$ is an algebra homomorphism.
\end{proof}

We remark that the relations $P_{ij}$ and $N_{ij}$ hold in $S_q(n,r)$
also follows from Lemma \ref{Hallnumbers} 
and the proposition in Section 2 of \cite{RingelInv}, and
that the lemma can be deduced from Lemma 5.6 in \cite{BLM}.

The homomorphism $\phi$ not surjective in general, and so this
is not a presentation of the $q$-Schur algebra over $\mathbb{Z}[q]$, since 
for instance $[m]$ is not invertible in $\mathbb{Z}[q]$. 

\subsection{Change of rings}

We need the following change of rings lemma for presentations of
algebras using quivers with relations. The proof is similar to an argument
at the end of Chapter 5 in \cite{JZ}. Let $\psi:R\rightarrow S$ be a
homomorphism of commutative rings, which gives $S$ the structure
of an $R$-algebra. Let $\Sigma$ be a quiver, and let $I\subseteq R\Sigma$
be an ideal. There are induced map of $R$-algebras $\psi:R\Sigma
\rightarrow S\Sigma$ and $R\Sigma/I \rightarrow S\Sigma/S\psi(I)$,
where $S\psi(I)\subseteq S\Sigma$ is the ideal generated by $\psi(I)$.

\begin{lemma} \label{changerings}
The induced map $(R\Sigma/I)\otimes_{R}S\rightarrow S\Sigma/S\psi(I)$
is an isomorphism of $R$-algebras.
\end{lemma}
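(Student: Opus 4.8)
The plan is to prove that the natural comparison map
$$\Phi:(R\Sigma/I)\otimes_R S\rightarrow S\Sigma/S\psi(I)$$
is an isomorphism by constructing its inverse, or equivalently by checking it is bijective on the level of free modules and then descending to the quotients. First I would recall that the path algebra construction commutes with base change: since $R\Sigma$ is the free $R$-module on the set $P$ of all paths in $\Sigma$, we have a canonical isomorphism of $S$-algebras $(R\Sigma)\otimes_R S\cong S\Sigma$, because tensoring a free $R$-module $\bigoplus_{p\in P}R$ with $S$ yields $\bigoplus_{p\in P}S$, and the multiplication (composition of paths) is defined combinatorially on the basis $P$ in the same way over either ring. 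Under this identification the map $\psi:R\Sigma\rightarrow S\Sigma$ is exactly $x\mapsto x\otimes 1$, so it is the unit of the base-change functor.

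Next I would analyse what happens to the ideal under base change. The key point is right-exactness of $-\otimes_R S$. Applying $-\otimes_R S$ to the short exact sequence of $R$-modules
$$I\longrightarrow R\Sigma\longrightarrow R\Sigma/I\longrightarrow 0$$
gives an exact sequence
$$I\otimes_R S\longrightarrow (R\Sigma)\otimes_R S\longrightarrow (R\Sigma/I)\otimes_R S\longrightarrow 0.$$
Identifying $(R\Sigma)\otimes_R S$ with $S\Sigma$, the image of $I\otimes_R S$ inside $S\Sigma$ is precisely the $S$-submodule generated by $\psi(I)$. I would then argue that this $S$-submodule is in fact a two-sided ideal equal to $S\psi(I)$: since $I$ is a two-sided ideal of $R\Sigma$ and $\psi$ is a ring homomorphism, $\psi(I)$ is closed under multiplication by $\psi(R\Sigma)$, and multiplying further by elements of $S$ and taking $S$-linear combinations produces exactly the two-sided ideal $S\psi(I)$ generated by $\psi(I)$ (using that $S$ is commutative so left and right $S$-multiples coincide, and that the relations generating $I$, being $R$-combinations of paths, generate the same two-sided ideal after base change). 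Hence the image of $I\otimes_R S$ in $S\Sigma$ is $S\psi(I)$, and the cokernel computation gives
$$(R\Sigma/I)\otimes_R S\;\cong\; S\Sigma/S\psi(I)$$
as $S$-modules, with the isomorphism being $\Phi$. Checking that $\Phi$ respects multiplication is routine, since both products are induced from path composition and $\Phi$ is the identity on the image of the path basis.

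I expect the main obstacle to be the identification of the image of $I\otimes_R S$ with the full two-sided ideal $S\psi(I)$, rather than merely with the $S$-module span of $\psi(I)$. A priori the right-exact sequence only tells me that the image is the $S$-span of $\psi(I)$; I must verify that this $S$-span is already closed under two-sided multiplication by arbitrary elements of $S\Sigma$, so that it coincides with the generated ideal $S\psi(I)$. This is where the two-sided-ideal structure of $I$ in $R\Sigma$ is essential: because $I$ absorbs multiplication by all of $R\Sigma$, its image $\psi(I)$ absorbs multiplication by $\psi(R\Sigma)$, and since every element of $S\Sigma$ is an $S$-linear combination of paths (which lie in $\psi(R\Sigma)$), the $S$-span of $\psi(I)$ does absorb $S\Sigma$-multiplication. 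I would spell this absorption argument out carefully, as it is the crux; everything else is the standard base-change-of-quotients computation referenced from Chapter 5 of \cite{JZ}.
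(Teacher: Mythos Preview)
Your proposal is correct and follows essentially the same approach as the paper: identify $R\Sigma\otimes_R S\cong S\Sigma$, apply right-exactness of $-\otimes_R S$ to the defining short exact sequence, and then identify the image of $I\otimes_R S$ in $S\Sigma$ with $S\psi(I)$. The paper states this last identification without justification, whereas you spell out the absorption argument (using that $I$ is a two-sided ideal and that every element of $S\Sigma$ is an $S$-linear combination of elements of $\psi(R\Sigma)$); this is a welcome elaboration but not a different route.
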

\begin{proof}
The natural isomorphism $R\otimes_R S\rightarrow S$ of $R$-algebras
induces an $R$-algebra isomorphism $$m:R\Sigma\otimes_R
S\rightarrow S\Sigma.$$ Applying the functor $-\otimes_RS$ to the short
exact sequence $$0 \rightarrow I \stackrel{i}{\rightarrow}R\Sigma \rightarrow
R\Sigma/I\rightarrow 0$$ gives us the exact sequence $$I\otimes_R
S \stackrel{j}{\rightarrow} S\Sigma \rightarrow (R\Sigma/I)\otimes_R
S\rightarrow 0$$ where $j=m\circ (i\otimes S)$, which shows that
$$(R\Sigma/I)\otimes_RS\simeq S\Sigma/\im(m\circ (i\otimes S)).$$
As $\im(m\circ (i\otimes S))=S\psi(I)$, the proof is complete.
\end{proof}

\subsection{Coefficients in $\mathbb{Q}(v)$}\label{injective}

Let $v=\sqrt{q}$ and $$S_v(n,r)= S_q(n, r)\otimes_{\mathbb{Z}[q]}\mathbb{Q}(v).$$

\begin{lemma} \label{qviso}
There is an isomorphism of $\mathbb{Q}(v)$-algebras $\mathbb{Q}(v)\Sigma(n,r)/\mathbb{Q}(v)I(n,r)
\rightarrow S_v(n,r),$ where $E_{i,\underline d}\mapsto e_{i, \underline d}$,
$F_{i, \underline d}\mapsto f_{i, \underline d}$ and $K_{\underline d}\mapsto k_{\underline d}$.
\end{lemma}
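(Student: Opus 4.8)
The plan is to build the map by base change and then prove it is bijective, with injectivity being the real content (hence the title of the subsection). Starting from the $\mathbb{Z}[q]$-algebra homomorphism $\phi$ of Lemma \ref{startlemma}, I would apply the change of rings Lemma \ref{changerings} along the ring map $\mathbb{Z}[q]\to\mathbb{Q}(v)$, $q\mapsto v^2$, to obtain a $\mathbb{Q}(v)$-algebra homomorphism $\bar\phi:\mathbb{Q}(v)\Sigma(n,r)/\mathbb{Q}(v)I(n,r)\to S_v(n,r)$ sending $E_{i,\underline d},F_{i,\underline d},K_{\underline d}$ to $e_{i,\underline d},f_{i,\underline d},k_{\underline d}$. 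The whole point of passing to $\mathbb{Q}(v)$ is that each quantum integer $[m]=q^{m-1}+\cdots+1$ is a nonzero polynomial in $q$ and hence a unit in $\mathbb{Q}(v)$; this is exactly what failed over $\mathbb{Z}[q]$ and obstructed surjectivity of $\phi$.

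For surjectivity I would argue that the generators now suffice. By the Hall algebra description of Section 3, $S^+_v(n,r)$ is the image of $H_v(\Lambda)$ under $\Theta^+$, and $H_v(\Lambda)$ is generated by the simples $S_i$; concretely Lemma \ref{posmult} writes any basis element $[f,g]$ of $S^+_v$, up to the scalar $\prod[m_{ij}]!$ which is invertible in $\mathbb{Q}(v)$, as an ordered product of the $e_{i,\underline d}$. Dually the $f_{i,\underline d}$ generate $S^-_v(n,r)$. Since $\mathcal{B}$ is a basis of $S_v(n,r)$ by Theorem \ref{2basis}, and each of its elements $[f_1,f_1+f_2][f_1+f_2,f_2]$ is a product of an element of $S^-_v$ with an element of $S^+_v$, it follows that $\bar\phi$ is onto. (Alternatively, this generation statement is Lemma 5.6 of \cite{BLM}, as remarked after Lemma \ref{startlemma}.)

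For injectivity I would establish the dimension bound $\dim_{\mathbb{Q}(v)}\mathbb{Q}(v)\Sigma(n,r)/\mathbb{Q}(v)I(n,r)\le \dim_{\mathbb{Q}(v)}S_v(n,r)$ by producing a spanning set of standard monomials of the correct cardinality. The claim is that every path in $\Sigma(n,r)$ is, modulo $I(n,r)$, a $\mathbb{Q}(v)$-combination of paths of triangular shape, namely an ordered $F$-monomial followed by an ordered $E$-monomial through a single intermediate vertex $K_{\underline d}$: the commutator relations $C_{ij}$ straighten any occurrence of an $E$ standing to the left of an $F$, at the cost of a diagonal $K$-correction together with strictly lower terms, while the quantum Serre relations $P_{ij}$ and $N_{ij}$ reorder the $E$-part and the $F$-part into the total order on indecomposables used in Lemma \ref{posmult}. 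The resulting standard monomials are indexed by the same data parametrising $\mathcal{B}$, so there are exactly $\dim S_v(n,r)$ of them. Combined with surjectivity and finite dimensionality, this forces $\bar\phi$ to be an isomorphism.

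The hard part is the straightening and counting of the last paragraph. One must check that the three families of relations genuinely suffice to reduce an arbitrary path to standard form, and that the reduction terminates; the natural device is induction on the degree of a monomial together with the degeneration order on orbits of Section 8 (or the total order on indecomposables), so that the corrections produced by $C_{ij}$ are provably lower. One must also verify that the standard monomials are in \emph{bijection} with $\mathcal{B}$, rather than merely surjecting onto its index set, so that no collapse of dimension occurs. Securing this exact count, as opposed to a mere inequality up to the correct constant, is where the argument needs the most care.
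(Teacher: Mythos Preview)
Your strategy is sound in outline but takes a much longer route than the paper. The paper's proof is essentially a one-liner: it invokes the known Doty--Giaquinto presentation of $S_v(n,r)$ from \cite{DG}, and checks by a direct rescaling of generators (setting $\tilde E_i=\sum_{\underline d} v^{-d_i+1}e_{i,\underline d}$, $\tilde F_i=\sum_{\underline d} v^{-d_{i+1}+1}f_{i,\underline d}$) that the relations $P_{ij},N_{ij},C_{ij}$ are equivalent to the defining relations of \cite[Theorem~4']{DG}. The base-change step you begin with is what the paper does in the \emph{next} result, Proposition~\ref{dgrelations}, not here.

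Your surjectivity argument is fine and is essentially the paper's Proposition~\ref{surjQ}. The issue is injectivity: the straightening you propose for the $E$-part and the $F$-part using only the quantum Serre relations $P_{ij},N_{ij}$ is precisely a PBW-type theorem for $U_q^+(\mathfrak{sl}_n)$ (cut down to the Schur quotient), which is exactly the content of \cite{DG} that the paper is citing. So you would be reproving the external input rather than bypassing it. Two further cautions: first, you cannot appeal to the degeneration order of Section~8 to control reductions in the \emph{abstract} algebra $\mathbb{Q}(v)\Sigma(n,r)/\mathbb{Q}(v)I(n,r)$, since that order lives on the geometric side and injectivity is what is at stake; any termination measure must be intrinsic to the path algebra. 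Second, the bijection between your standard monomials and $\mathcal B$ requires not just that the $F$-part and $E$-part individually straighten, but that the pair satisfies the compatibility condition encoded in Lemma~\ref{monbasis} (the inequality on $\underline d_1$); without this the count overshoots. None of this is wrong, but it amounts to an independent proof of the cited theorem, whereas the paper simply quotes it.
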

\begin{proof}
Let $\tilde{E}_i=\sum_{\underline d} v^{-d_i+1}e_{i,\underline d}$,
$\tilde{F}_i=\sum_{\underline d} v^{-d_{i+1}+1}f_{i,\underline d}$
and $\tilde{K}_{\underline d}=k_{\underline d}$, and by abuse of notation,
in this proof we let ${E}_i=\sum_{\underline d}e_{i,\underline d}$,
${F}_i=\sum_{\underline d} f_{i,\underline d}$
and $K_{\underline d}=k_{\underline d}$. Then both
$\{\tilde{E}_i, \tilde{F}_j, \tilde{K}_{\underline d}\}$ and
$\{{E}_i, {F}_j, {K}_{\underline d}\}$ generate $S_v(n, r)$. Moreover, by
a straightforward computation,
$\tilde{E}_i, \tilde{F}_j, \tilde{K}_{\underline d}$ satisfy
the defining relations in Theorem 4' in \cite{DG} if and only if
${E}_i, {F}_j, {K}_{\underline d}$ satisfy the relations $P_{ij}, N_{ij}$ and
$C_{ij}$. Therefore we have the isomorphism as required.
\end{proof}

\begin{proposition}\label{dgrelations}
The induced map $\phi\otimes{\mathbb{Q}(v)}:\mathbb{Z}
[q]\Sigma(n,r)/I(n,r)\otimes_{\mathbb{Z}[q]}\mathbb{Q}(v)
\rightarrow S_q(n, r)\otimes_{\mathbb{Z}[q]}\mathbb{Q}(v)$
is a $\mathbb{Q}(v)$-algebra isomorphism, where $\phi$ is as in Lemma
\ref{startlemma}.
\end{proposition}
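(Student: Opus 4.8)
The plan is to combine the change-of-rings lemma with the coefficient-comparison isomorphism already established over $\mathbb{Q}(v)$. The statement of Proposition \ref{dgrelations} concerns the induced map after tensoring with $\mathbb{Q}(v)$, so the work has essentially been front-loaded into Lemmas \ref{startlemma}, \ref{changerings}, and \ref{qviso}, and the proof will be a short synthesis of these.

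First I would apply Lemma \ref{changerings} with $R=\mathbb{Z}[q]$, $S=\mathbb{Q}(v)$, $\Sigma=\Sigma(n,r)$, $I=I(n,r)$, and $\psi:\mathbb{Z}[q]\rightarrow\mathbb{Q}(v)$ the ring homomorphism sending $q\mapsto v^2$. This gives a canonical isomorphism of $\mathbb{Z}[q]$-algebras
$$\bigl(\mathbb{Z}[q]\Sigma(n,r)/I(n,r)\bigr)\otimes_{\mathbb{Z}[q]}\mathbb{Q}(v)\;\xrightarrow{\;\sim\;}\;\mathbb{Q}(v)\Sigma(n,r)/\mathbb{Q}(v)\,\psi(I(n,r)).$$
Here I need to observe that the ideal $\mathbb{Q}(v)\psi(I(n,r))$ appearing on the right is precisely the ideal $\mathbb{Q}(v)I(n,r)$ named in Lemma \ref{qviso}: the generators $P_{ij,\underline d}$, $N_{ij,\underline d}$, $C_{ij,\underline d}$ of $I(n,r)$ map under $\psi$ to the corresponding generators (with $q$ replaced by $v^2$) of the ideal over $\mathbb{Q}(v)$, and these generate the same ideal. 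So the left-hand side of the proposition is identified, via Lemma \ref{changerings}, with $\mathbb{Q}(v)\Sigma(n,r)/\mathbb{Q}(v)I(n,r)$.

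Next I would invoke Lemma \ref{qviso}, which supplies an isomorphism $\mathbb{Q}(v)\Sigma(n,r)/\mathbb{Q}(v)I(n,r)\xrightarrow{\sim}S_v(n,r)$ sending $E_{i,\underline d}\mapsto e_{i,\underline d}$, $F_{i,\underline d}\mapsto f_{i,\underline d}$, $K_{\underline d}\mapsto k_{\underline d}$. Since $S_v(n,r)=S_q(n,r)\otimes_{\mathbb{Z}[q]}\mathbb{Q}(v)$ by definition, the target is exactly the right-hand side of the proposition. Composing this isomorphism with the identification from the previous paragraph yields an isomorphism between the source and target of $\phi\otimes\mathbb{Q}(v)$. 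The essential point is then to check that this composite agrees with $\phi\otimes\mathbb{Q}(v)$, which follows because both are $\mathbb{Q}(v)$-algebra maps that send each generator $E_{i,\underline d}$ to $e_{i,\underline d}$ (and similarly for $F$ and $K$), and these generators, together with the $K_{\underline d}$, generate the whole quotient algebra.

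The only genuine subtlety, and the step I would treat most carefully, is the compatibility of generators, i.e. verifying that the two maps literally coincide on generators rather than merely being abstractly isomorphic. Lemma \ref{changerings} is natural in the sense that its isomorphism sends the class of a path $p$ to $p\otimes 1$, so it sends $E_{i,\underline d}\otimes 1$ to the generator $E_{i,\underline d}$ of the $\mathbb{Q}(v)$-quotient; Lemma \ref{qviso} then sends this to $e_{i,\underline d}$; and $\phi\otimes\mathbb{Q}(v)$ sends $E_{i,\underline d}\otimes 1$ to $\phi(E_{i,\underline d})\otimes 1=e_{i,\underline d}$ as well. Once this bookkeeping is in place, the proposition follows immediately, since an algebra map determined on generators that agrees with a known isomorphism on those same generators must be that isomorphism.
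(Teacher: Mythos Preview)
Your proposal is correct and follows essentially the same approach as the paper: apply Lemma~\ref{changerings} to identify the tensor product with $\mathbb{Q}(v)\Sigma(n,r)/\mathbb{Q}(v)I(n,r)$, compose with the isomorphism of Lemma~\ref{qviso}, and observe that the composite agrees with $\phi\otimes\mathbb{Q}(v)$ on generators. Your write-up is in fact more careful than the paper's terse version, explicitly checking the generator-level compatibility that the paper leaves implicit.
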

\begin{proof}
By Lemma \ref{changerings}, the natural inclusion
$\psi:\mathbb{Z}[q]\rightarrow \mathbb{Q}(v)$, induces an isomorphism
$$\mathbb{Z}[q]\Sigma(n,r)/I(n,r)\otimes_{Z[q]}\mathbb{Q}(v)\simeq
\mathbb{Q}(v)\Sigma(n,r)/\mathbb{Q}(v)I(n,r),$$
which composed with the isomorphism in Lemma \ref{qviso} is 
$\phi\otimes \mathbb{Q}(v)$. Thus the proposition follows.
\end{proof}

Since $q$ is invertible in $S_v(n,r)$, we cannot obtain the $0$-Schur
algebra by specialising $q=0$. 

\subsection{A presentation of $q$-Schur algebra over $\mathcal{Q}$}

Now we choose an intermediate
ring $\mathbb{Z}[q]\subseteq \mathcal{Q} \subseteq
\mathbb{Q}(v)$ such that $q$ is non-invertible in $\mathcal{Q}$, and we will prove in
Section \ref{real} that $\phi\otimes \mathcal{Q}$ is an isomorphism.

Let $\mathcal{Q}$ be obtained from $\mathbb{Z}[q]$ by inverting 
all polynomials of the form $1+ qf(q)$. In particular, all 
$[m]$ for $m\in \mathbb{N}$ are inverted. Clearly, $\mathbb{Z}[q]\subseteq
\mathcal{Q} \subseteq \mathbb{Q}(q)$. Note that $q$ is not invertible
in $\mathcal{Q}$ and so the specialisation $q=0$ is possible.

\begin{proposition} \label{surjQ}
The induced map $\phi\otimes\mathcal{Q}:\mathbb{Z}
[q]\Sigma(n,r)/I(n,r)\otimes_{\mathbb{Z}[q]}\mathcal{Q}
\rightarrow S_q(n,r)\otimes_{\mathbb{Z}[q]}\mathcal{Q}$ is a surjective
$\mathcal{Q}$-algebra homomorphism.
\end{proposition}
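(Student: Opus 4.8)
The plan is to show that the image of $\phi\otimes\mathcal{Q}$ contains a $\mathcal{Q}$-basis of $S_q(n,r)\otimes_{\mathbb{Z}[q]}\mathcal{Q}$. By Theorem \ref{2basis} the set $\mathcal{B}$ is a $\mathbb{Z}[q]$-basis of $S_q(n,r)$, so $\mathcal{B}\otimes 1$ is a $\mathcal{Q}$-basis of the base change and it suffices to exhibit every element of $\mathcal{B}$ in the image. Each such element factors as $[f_1,f_1+f_2]\,[f_1+f_2,f_2]$, where the left factor lies in the non-positive subalgebra $S^-_q(n,r)$ (since $f_1\subseteq f_1+f_2$) and the right factor in the non-negative subalgebra $S^+_q(n,r)$ (since $f_2\subseteq f_1+f_2$). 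Thus it is enough to prove that both $S^+_q(n,r)\otimes\mathcal{Q}$ and $S^-_q(n,r)\otimes\mathcal{Q}$ are contained in $\im(\phi\otimes\mathcal{Q})$; the two cases are interchanged by the transpose $[f,f']\mapsto[f',f]$, which swaps the $E$'s and $F$'s, so I treat only $S^+_q(n,r)$.

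For the non-negative part, recall the surjection $\Theta^+\colon H_q(\Lambda)\to S^+_q(n,r)$, under which $[S_i]\mapsto\sum_{\underline{d}}e_{i,\underline{d}}=\phi(E_i)$; hence $\phi(E_i)$ and the idempotents $k_{\underline{d}}=\phi(K_{\underline{d}})$ all lie in the image. Base change is right exact, so $\Theta^+\otimes\mathcal{Q}$ is still surjective and $S^+_q(n,r)\otimes\mathcal{Q}=\Theta^+(H_q(\Lambda)\otimes\mathcal{Q})$. It therefore suffices to check that the simple classes $[S_i]$ generate $H_q(\Lambda)\otimes\mathcal{Q}$ as a $\mathcal{Q}$-algebra. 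For this I would first build each indecomposable $[M_{jk}]$ from simples: the ordered product $[S_j][S_{j+1}]\cdots[S_k]$ equals $[M_{jk}]$ plus a $\mathbb{Z}[q]$-combination of classes of strictly more decomposable modules, with leading coefficient the unit $1$ coming from $h^{M_{jk}}_{S_j,M_{j+1,k}}=1$, so induction on the order $\leq$ places every $[M_{jk}]$ in the subalgebra generated by the simples. A general class $[M]=\bigoplus M_{ij}^{m_{ij}}$ is then recovered from the Hall-algebra identity underlying Lemma \ref{posmult}, which expresses the ordered product of the indecomposable classes as $[M]\prod[m_{ij}]!$; since $[m]=1+q[m-1]$ has the form $1+qf(q)$, the factor $\prod[m_{ij}]!$ is a unit in $\mathcal{Q}$ and we may solve for $[M]$.

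Applying $\Theta^+$ then gives $S^+_q(n,r)\otimes\mathcal{Q}\subseteq\im(\phi\otimes\mathcal{Q})$, and sandwiching $\Theta^+([M])$ between the idempotents $k_{\underline{d}}$ isolates each individual basis element $[f,f']$ with $f'\subseteq f$; the transpose argument yields the same for $S^-_q(n,r)$, and the reduction of the first paragraph completes the proof. I expect the generation step in the middle paragraph to be the main obstacle, precisely because $q$ is \emph{not} invertible in $\mathcal{Q}$: the triangular systems solving for $[M_{jk}]$ and for $[M]$ must be driven by leading coefficients that are units, and the only units available are the inverted quantum integers $[m]$ together with $1$. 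The Hall-number computations behind Lemma \ref{posmult} and the multiplication formulae of Lemma \ref{Lemma3.2} are exactly what guarantee that the top term in each relevant product carries coefficient $1$ or a product of $[m]$'s, while every genuinely $q$-weighted contribution is attached only to strictly smaller terms absorbed by the inductive hypothesis.
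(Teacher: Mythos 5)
Your proof follows the paper's strategy in outline: reduce surjectivity via Theorem \ref{2basis} to showing $S^{\pm}_q(n,r)\otimes_{\mathbb{Z}[q]}\mathcal{Q}\subseteq \im(\phi\otimes\mathcal{Q})$, and obtain all nested orbits from the generators by a triangular argument whose pivots are quantum integers $[m]$, which are units in $\mathcal{Q}$. The paper runs this argument entirely inside $S^+_q(n,r)$, quoting Lemma \ref{posmult}; you instead work in the Hall algebra $H_q(\Lambda)$ and push forward along $\Theta^+$. That detour is legitimate, and your two-step generation argument (indecomposables from simples by a unitriangular system with pivot $1$; arbitrary classes from ordered products of indecomposables after dividing by the unit $\prod[m_{ij}]!$) is sound; it even makes explicit a step the paper leaves implicit.

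The one genuine error is your claim that $\Theta^+$ is a \emph{surjection}, and with it the equality $S^+_q(n,r)\otimes\mathcal{Q}=\Theta^+(H_q(\Lambda)\otimes\mathcal{Q})$. The paper never asserts this, and it is false: by definition $\Theta^+([M])$ is the formal sum $\sum[f,f']$ over \emph{all} orbits with $f'\subseteq f$ and $f/f'\simeq M$, so the image of $\Theta^+$ is only the span of these formal sums. Each orbit occurs in exactly one such sum, with coefficient $1$; in particular, for $n\geq 2$ the idempotent $k_{\underline d}$ is not in the image, since its only occurrence is in $\Theta^+([0])=\sum_{\underline e}k_{\underline e}$. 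Consequently the reduction ``it therefore suffices to check that the $[S_i]$ generate $H_q(\Lambda)\otimes\mathcal{Q}$'' does not stand on its own. What saves your proof is the sandwich you append at the end: since a nested pair $(f,f')$ with $f'\subseteq f$ is determined up to isomorphism by the class of $f$ and the class of $f/f'$ (Section \ref{linearquiver}), the product $k_{\underline d}\,\Theta^+([M])\,k_{\underline e}$ is a single orbit or zero, and every orbit $[f,f']$ with $f'\subseteq f$ arises this way. So reorder the logic: the image of $\phi\otimes\mathcal{Q}$ contains every $\Theta^+([M])$ (because the $[S_i]$ generate $H_q(\Lambda)\otimes\mathcal{Q}$ and $\Theta^+$ is an algebra homomorphism sending $[S_i]$ to $\phi(E_i)$) and every $k_{\underline d}$, hence every sandwich, hence all of $S^+_q(n,r)\otimes\mathcal{Q}$. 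With that rearrangement, together with the transpose anti-automorphism (or the mirror argument with $\Theta^-$) for $S^-_q(n,r)$ and the reduction of your first paragraph, the proof is complete and agrees in substance with the paper's.
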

\begin{proof}
The image of $\phi\otimes \mathcal{Q}$ is the
subalgebra of $S_q(n,r)\otimes_{\mathbb{Z}[q]}\mathcal{Q}$ generated
by the set of all $e_{i,\underline d}, f_{i,\underline d}$ and $k_{\underline d}$.
Lemma \ref{posmult} shows that the $\mathbb{Z}[q]$-subalgebra of
$S^+_q(n,r)$ generated by all $e_{i,\underline d}$ and $k_{\underline d}$
contains all $$[f, g]\prod [m_{ij}]!$$ where $g\subseteq f$ and $m_{ij}$
is the multiplicity of $M_{ij}$ as a direct summand in $f/g$. Since $[m]$ is
invertible in $\mathcal{Q}$ for any $m$, the image contains
$S^+_q(n,r)\otimes_{\mathbb{Z}[q]}\mathcal{Q}$. Similarly, the image
contains $S^-_q(n,r)\otimes_{\mathbb{Z}[q]}\mathcal{Q}$. By Theorem
\ref{2basis} the map is surjective. 
\end{proof}

By Lemma \ref{changerings}, $\mathcal{Q}\Sigma(n,r)/\mathcal{Q}I(n,r)\simeq \mathbb{Z}
[q]\Sigma(n,r)/I(n,r)\otimes_{Z[q]}\mathcal{Q}$. We have
the following theorem, which will be proven in Section \ref{real}. 

\begin{theorem} \label{laterproof}
The induced map $\phi\otimes\mathcal{Q}:\mathbb{Z}
[q]\Sigma(n,r)/I(n,r)\otimes_{\mathbb{Z}[q]}\mathcal{Q}
\rightarrow S_q(n,r)\otimes_{\mathbb{Z}[q]}\mathcal{Q}$ is 
a $\mathcal{Q}$-algebra isomorphism.
\end{theorem}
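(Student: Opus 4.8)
The plan is to prove that $\phi\otimes\mathcal{Q}$ is an isomorphism by combining the surjectivity already established in Proposition \ref{surjQ} with an injectivity argument obtained by comparison with the coefficient ring $\mathbb{Q}(v)$. First I would recall that by Proposition \ref{surjQ} the map $\phi\otimes\mathcal{Q}$ is surjective, so it remains to prove injectivity. The key structural observation is that $\mathcal{Q}$ embeds into $\mathbb{Q}(v)$: since $\mathcal{Q}$ is obtained from $\mathbb{Z}[q]$ by inverting polynomials of the form $1+qf(q)$, and none of these vanish identically, $\mathcal{Q}$ is a localisation of $\mathbb{Z}[q]$ sitting inside the fraction field $\mathbb{Q}(q)\subseteq\mathbb{Q}(v)$. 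The strategy is therefore to tensor up from $\mathcal{Q}$ to $\mathbb{Q}(v)$ and use the isomorphism of Proposition \ref{dgrelations} there.

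The main technical point will be to ensure that tensoring the source $\mathcal{Q}$-module $\mathbb{Z}[q]\Sigma(n,r)/I(n,r)\otimes_{\mathbb{Z}[q]}\mathcal{Q}$ up to $\mathbb{Q}(v)$ detects injectivity. The clean way to arrange this is to show that this $\mathcal{Q}$-module is torsion-free, equivalently that it injects into its localisation at the generic point. Concretely, I would consider the commutative square whose horizontal maps are $\phi\otimes\mathcal{Q}$ and $\phi\otimes\mathbb{Q}(v)$ and whose vertical maps are the base-change homomorphisms $-\otimes_{\mathcal{Q}}\mathbb{Q}(v)$. By Proposition \ref{dgrelations} together with Lemma \ref{changerings}, the bottom map $\phi\otimes\mathbb{Q}(v)$ is an isomorphism. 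Since the target $S_q(n,r)\otimes_{\mathbb{Z}[q]}\mathcal{Q}$ is a free, hence torsion-free, $\mathcal{Q}$-module (it has the $\gl(V)$-orbit basis), its right vertical map into $S_v(n,r)$ is injective. A diagram chase then shows that any element of the kernel of $\phi\otimes\mathcal{Q}$ maps to zero under the composite down-then-across, hence lies in the kernel of the left vertical base-change map.

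The hard part will therefore be controlling the left vertical map, i.e. proving that the source module $\mathbb{Z}[q]\Sigma(n,r)/I(n,r)\otimes_{\mathbb{Z}[q]}\mathcal{Q}$ is itself $\mathcal{Q}$-torsion-free, so that its kernel under $-\otimes_{\mathcal{Q}}\mathbb{Q}(v)$ vanishes and injectivity of $\phi\otimes\mathcal{Q}$ follows. This is exactly the kind of flatness or spanning-set control that the surrounding sections are built to provide: I expect the argument to produce an explicit spanning set of $\mathbb{Z}[q]\Sigma(n,r)/I(n,r)$ as a $\mathbb{Z}[q]$-module of cardinality at most $|\mathcal{F}\times\mathcal{F}/\gl(V)|$, using the binomial relations $P_{ij}$, $N_{ij}$ and $C_{ij}$ to rewrite arbitrary paths in a normal form indexed by the orbit basis $\mathcal{B}$ of Theorem \ref{2basis}. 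Once the number of generators of the source over $\mathcal{Q}$ is bounded above by the $\mathcal{Q}$-rank of the target, surjectivity of $\phi\otimes\mathcal{Q}$ between free modules of the same finite rank forces it to be an isomorphism, and the comparison with $\mathbb{Q}(v)$ pins down that no torsion can intervene.

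Since this theorem is stated as \texttt{laterproof} and deferred to Section \ref{real}, I would present the proof assuming the normal-form spanning result proved there; the essential content of the present statement is that, after inverting the quantum integers $[m]$ but not $q$ itself, the quiver-with-relations presentation already matches the $q$-Schur algebra, and the proof reduces to the rank count against the generic fibre isomorphism of Proposition \ref{dgrelations}.
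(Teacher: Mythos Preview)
Your strategy of passing to the generic fibre $\mathbb{Q}(v)$ is genuinely different from the paper's route, which instead passes to the \emph{special} fibre $q=0$. The paper argues as follows: from the surjection of Proposition~\ref{surjQ} one has a short exact sequence $0\to K\to \mathcal{Q}\Sigma(n,r)/\mathcal{Q}I(n,r)\to S_q(n,r)\otimes_{\mathbb{Z}[q]}\mathcal{Q}\to 0$; tensoring with $\mathcal{Q}/q\mathcal{Q}\simeq\mathbb{Z}$ and invoking Theorems~\ref{presgnr} and~\ref{geomreal} shows $K/qK=0$; then Nakayama gives an annihilator $1+qf(q)$ of $K$, which is a unit in $\mathcal{Q}$, so $K=0$.

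Your approach, however, has a real gap at the point you yourself flag as ``the hard part''. You expect Section~\ref{real} to supply a spanning set of $\mathbb{Z}[q]\Sigma(n,r)/I(n,r)$ (or of its $\mathcal{Q}$-form) of cardinality $|\mathcal{F}\times\mathcal{F}/\gl(V)|$, and from this to deduce torsion-freeness of the source. But Section~\ref{real} does \emph{not} do this: the straightening in the proof of Theorem~\ref{presgnr} is carried out only modulo $q$, where the relations $P_{ij}(0)$, $N_{ij}(0)$, $C_{ij}(0)$ are binomial and the rewriting is deterministic. Over $\mathcal{Q}$ the relations have three terms with coefficients $1,\,q+1,\,q$, and producing a PBW-type spanning set there is a separate (and harder) argument that the paper never undertakes. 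Without it, you have no handle on the $\mathcal{Q}$-torsion of the source, and the $\mathbb{Q}(v)$ comparison cannot conclude.

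Note also that if a spanning set of the correct size over $\mathcal{Q}$ \emph{were} available, the detour through $\mathbb{Q}(v)$ would be unnecessary: a surjection from a $\mathcal{Q}$-module generated by $N$ elements onto a free $\mathcal{Q}$-module of rank $N$ is automatically an isomorphism (the surjection splits, and composing a presentation $\mathcal{Q}^N\twoheadrightarrow M$ with the projection to $\mathcal{Q}^N$ gives a surjective, hence bijective, endomorphism). So even on its own terms your outline is slightly roundabout. The paper's use of the special fibre is economical precisely because the $q=0$ normal form is easy, and Nakayama over the local-at-$q$ ring $\mathcal{Q}$ then lifts the conclusion.
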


\section{A generic algebra}\label{genmult}

In this section let $k$ be algebraically closed.  We define a generic
multiplication of orbits in $\mathcal{F}\times \mathcal{F}$ and obtain
a $\mathbb{Z}$-algebra $G(n,r)$, which we call a generic algebra.
This multiplication generalises the one 
for positive $0$-Schur algebras in \cite{Su} and is
similar to the product defined by Reineke \cite{Reineke}
for Hall algebras. We also give generators for $G(n,r)$.

Let $\Delta:\mathcal{F}\times\mathcal{F}\times \mathcal{F} \rightarrow
(\mathcal{F}\times \mathcal{F})\times (\mathcal{F} \times \mathcal{F})$
be the morphism given by $$\Delta(p_1,p_2,p_3)=((p_1,p_2),(p_2,p_3)).$$
Let $$\pi:\mathcal{F}\times\mathcal{F}\times \mathcal{F} \rightarrow
\mathcal{F}\times\mathcal{F}$$ be the projection onto the left and right
component. The map $\pi$ is open, and $\Delta$ is a closed embedding.

Given two orbits $e_{A}$ and $e_{A'}$, define $$S(A,A')=\pi\Delta^{-1}
(e_{A}\times e_{A'})$$ That is, $S(A,A')$ is the union of the orbits
with non-zero coefficient in the product $e_{A}\cdot e_{A'}$ in $S_q(n,r)$.

\begin{lemma}\label{irred}
The closure of $S(A,A')$ in $\mathcal{F}\times\mathcal{F}$ is irreducible.
\end{lemma}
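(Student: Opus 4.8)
The plan is to exhibit $S(A,A')$ as the image of an irreducible variety under a morphism, since the closure of the image of an irreducible variety under a morphism is irreducible. The natural candidate is the incidence variety
$$
Z = \Delta^{-1}(e_A \times e_{A'}) = \{(p_1,p_2,p_3) \in \mathcal{F}\times\mathcal{F}\times\mathcal{F} \mid (p_1,p_2)\in e_A,\ (p_2,p_3)\in e_{A'}\},
$$
because $S(A,A') = \pi(Z)$ by definition, and $\pi$ is a morphism of varieties. So it suffices to show that $Z$ is irreducible; the image and its closure then inherit irreducibility automatically.

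The key step is therefore to prove that $Z$ is irreducible, and the natural route is to fibre $Z$ over the middle flag. Consider the projection $p_2 : Z \to \mathcal{F}$ sending $(p_1,p_2,p_3)$ to $p_2$. Its image lands in a single orbit $\mathcal{F}_{\underline e}$ (the common middle type forced by $e_A \subseteq \mathcal{F}_{\underline e}\times\mathcal{F}_{\underline f}$ and $e_{A'}\subseteq \mathcal{F}_{\underline d}\times\mathcal{F}_{\underline e}$), which is irreducible since it is a homogeneous space $\gl(V)/\mathrm{Stab}$. Over a fixed $p_2$, the fibre is $\{p_1 \mid (p_1,p_2)\in e_A\}\times\{p_3\mid (p_2,p_3)\in e_{A'}\}$, a product of two orbit-slices. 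First I would argue each factor is irreducible: the set of $p_1$ with $(p_1,p_2)\in e_A$ is a single orbit under $\mathrm{Stab}_{\gl(V)}(p_2)$, hence irreducible, and likewise for $p_3$. Thus every fibre is irreducible of constant dimension, and the base $\mathcal{F}_{\underline e}$ is irreducible, so $Z$ is irreducible by the standard fibration criterion (a surjective morphism with irreducible base and irreducible equidimensional fibres has irreducible total space).

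The main obstacle will be verifying the constancy and irreducibility of the fibres cleanly, i.e. making precise that the transitivity on each factor genuinely holds. This is where Lemma \ref{orbit1} and the representation-theoretic description of orbits become useful: fixing $p_2$ and requiring $(p_1,p_2)\in e_A$ pins down the isomorphism type of the relevant subquotients, and the stabiliser of $p_2$ acts transitively on the configurations realising that fixed type. I would justify constant fibre dimension by noting that $p_2 : Z \to \mathcal{F}_{\underline e}$ is $\gl(V)$-equivariant for the diagonal action, so all fibres are translates of one another and hence isomorphic. Once irreducibility of $Z$ is established, the conclusion is immediate: $S(A,A') = \pi(Z)$ is the continuous image of an irreducible set, so $S(A,A')$ and its closure $\overline{S(A,A')}$ are irreducible.
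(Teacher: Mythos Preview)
Your proof is correct and follows the same overall strategy as the paper: show that $Z=\Delta^{-1}(e_A\times e_{A'})$ is irreducible, then conclude that $S(A,A')=\pi(Z)$ has irreducible closure. The paper packages the irreducibility of $Z$ slightly more directly: fixing $f_2=f_3$ and $(f_3,f_4)\in e_{A'}$, it observes that any $(p_1,p_2,p_3)\in Z$ can be written as $(gaf_1,gf_3,gf_4)$ for some $g\in\gl(V)$ and $a\in\mathrm{Aut}(f_3)$, so $Z$ is the image of the irreducible variety $\mathrm{Aut}(f_3)\times\gl(V)$ under a single morphism. Your fibration argument unpacks exactly the same transitivity facts---your claim that $\{p_1:(p_1,p_2)\in e_A\}$ is a single $\mathrm{Stab}(p_2)$-orbit is precisely the existence of such an $a$---and your equivariance remark (all fibres are $\gl(V)$-translates of one fixed fibre) is what makes the ``fibration criterion'' rigorous here, since it exhibits $Z$ as the image of $\gl(V)\times(\text{one fibre})$. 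So the two arguments are the same in substance, differently packaged.
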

\begin{proof}
Let $[f_1,f_2]=e_{A}$, $[f_3,f_4]=e_{A'}$, and $S=\Delta^{-1}
(e_{A}\times e_{A'})$. We first show that  $S$ is irreducible. If
$f_2\not\simeq f_3$ then $S$ is empty, and we are done. So we
may assume that $f_2=f_3$. Let $(p_1,p_2,p_3)\in S$ then
there exists $g\in \gl(V)$ such that $(p_2,p_3)=g(f_3,f_4)$ and
$g(g^{-1}p_1,f_3,f_4)=(p_1,p_2,p_3)$, where $\gl(V)$ acts diagonally.
Since $(g^{-1}p_1,f_3)\simeq (f_1,f_3)$, there is an $a\in Autf_3$
such that $g^{-1}p_1=af_1$. Hence $S$ is the image of the morphism
$$Autf_3\times \gl(V) \rightarrow \mathcal{F}\times \mathcal{F}\times
\mathcal{F}$$ given by $$(a,g)\mapsto (gaf_1,gf_3,gf_4)$$
and is therefore irreducible. Now $S(A,A')=\pi(S)$, and so its closure is
irreducible.
\end{proof}

Since there are only finitely many orbits in $S(A,A')$, as a consequence
of Lemma \ref{irred}, we have the following corollary.

\begin{corollary} \label{uniqueorbit}
There is a unique open $\gl(V)$-orbit in $S(A,A')$.
\end{corollary}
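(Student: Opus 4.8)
The plan is to deduce the corollary formally from Lemma~\ref{irred}, using two standard facts about algebraic group actions: each $\gl(V)$-orbit is irreducible and locally closed (hence open in its own closure), and $\mathcal{F}\times\mathcal{F}$ carries only finitely many orbits. Write $S=S(A,A')$ and list the finitely many orbits it contains as $O_1,\dots,O_m$. Each $O_j$ is the image of the irreducible group $\gl(V)$ under $g\mapsto g\cdot(p_j,p_j')$, so $O_j$ is irreducible and $\overline{O_j}$ is a closed irreducible subset of $\mathcal{F}\times\mathcal{F}$.

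First I would pass to closures. Since $\overline S=\overline{O_1}\cup\cdots\cup\overline{O_m}$ is irreducible by Lemma~\ref{irred}, a finite union of closed irreducible sets can be irreducible only if one of them contains all the others (induct on $m$: irreducibility forbids writing $\overline S$ as a union of two proper closed subsets, so either $\overline{O_1}=\overline S$ or $\bigcup_{j\ge 2}\overline{O_j}=\overline S$, and the latter case recurses). Hence $\overline{O_i}=\overline S$ for some index $i$. This $O_i$ is the sought open orbit: being locally closed, $O_i$ is open in $\overline{O_i}=\overline S$, and since $S\subseteq\overline S$, the set $O_i=O_i\cap S$ is therefore open in $S$.

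For uniqueness I would use that a subset is irreducible exactly when its closure is, so $S$ itself is irreducible. In an irreducible space every nonempty open subset is dense and any two nonempty open subsets meet; since distinct $\gl(V)$-orbits are disjoint, at most one orbit can be open in $S$. Together with the existence step this shows $O_i$ is the unique open orbit. The same density remark also confirms that the maximal orbit is unambiguous: if two distinct orbits had closure $\overline S$, each would be open and dense in $\overline S$ and they would have to intersect, a contradiction.

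The argument is essentially topological once Lemma~\ref{irred} is available, so there is no substantial obstacle. The only points needing care are to invoke local closedness of orbits so that $O_i$ is genuinely \emph{open} in $S$ rather than merely dense, and to keep the distinction between being open in $\overline S$ and being open in $S$ straight when transferring the openness of $O_i$ down to the constructible set $S$.
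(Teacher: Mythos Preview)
Your proof is correct and follows exactly the approach the paper intends: it is the detailed unpacking of the one-line justification ``Since there are only finitely many orbits in $S(A,A')$, as a consequence of Lemma~\ref{irred}\ldots''. The paper states no more than this, so your argument simply fills in the standard topology (finite orbit decomposition, local closedness of orbits, and irreducibility forcing a unique maximal orbit closure) that the authors leave implicit.
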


We we define a new multiplication
$$e_{A}\star e_{A'}=e_{A''},$$ if $S(A,A')$ is non-empty and $e_{A''}$
is the open orbit in $S(A,A')$, and $$e_{A}\star e_{A'}=0$$ if $S(A,A')$
is empty.

\begin{proposition}
The free $\mathbb{Z}$-module $G(n,r)$ with the product
$\star$ is a $\mathbb{Z}$-algebra.
\end{proposition}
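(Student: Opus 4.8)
The module structure is free by fiat, and $\star$ extends $\mathbb{Z}$-bilinearly from the basis, so the only substantive point is associativity: $(e_A\star e_{A'})\star e_{A''}=e_A\star(e_{A'}\star e_{A''})$ for all orbits $e_A,e_{A'},e_{A''}$. (Unitality then holds with identity $\sum_{\underline d}k_{\underline d}$, the sum of the diagonal orbits: $\star$-ing with such an orbit of matching dimension vector realises $S$ as a single orbit, left unchanged.) My plan is to realise both bracketings as the unique open orbit in the image of one irreducible fourfold configuration variety.

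First I would introduce
$$T=\{(p_1,p_2,p_3,p_4)\in\mathcal{F}^4\mid (p_1,p_2)\in e_A,\ (p_2,p_3)\in e_{A'},\ (p_3,p_4)\in e_{A''}\}$$
together with the projection $\rho:\mathcal{F}^4\rightarrow\mathcal{F}\times\mathcal{F}$, $\rho(p_1,p_2,p_3,p_4)=(p_1,p_4)$. The first step is to show $T$ is irreducible, generalising Lemma \ref{irred}: after discarding the empty case one may fix representatives with common middle flags, and $T$ fibres over the irreducible variety $\Delta^{-1}(e_A\times e_{A'})$ of Lemma \ref{irred} with fibre over $(p_1,p_2,p_3)$ the homogeneous, hence irreducible and equidimensional, set $\{p_4\mid (p_3,p_4)\in e_{A''}\}$; a locally trivial fibration with irreducible base and irreducible fibres is irreducible. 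Consequently $\rho(T)$ is irreducible and, having only finitely many orbits, contains a unique open orbit $e_C$ by the argument of Corollary \ref{uniqueorbit}.

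It then remains to identify $e_C$ with each bracketing; I treat $(e_A\star e_{A'})\star e_{A''}$, the other case being symmetric. Write $e_B=e_A\star e_{A'}$, the open orbit of $S(A,A')$. Project $T$ to the coordinates $(p_1,p_3,p_4)$, with image $T_{134}=\{(p_1,p_3,p_4)\mid (p_1,p_3)\in S(A,A'),\ (p_3,p_4)\in e_{A''}\}$, again irreducible. The condition $(p_1,p_3)\in e_B$ is open, and I claim it is dense in $T_{134}$: the map $T_{134}\rightarrow S(A,A')$ is dominant, since given any $(p_1,p_3)\in S(A,A')$ a valid $p_4$ exists whenever the dimension vectors match, and $e_B$ is open dense in the irreducible $S(A,A')$ because $\pi$ is open. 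Hence the open locus $V_{134}=\{(p_1,p_3,p_4)\mid (p_1,p_3)\in e_B,\ (p_3,p_4)\in e_{A''}\}$ is dense in $T_{134}$. Since the further projection to $(p_1,p_4)$ is continuous, $\rho(T)$ and the image of $V_{134}$ have the same closure; but the latter image is exactly $S(B,A'')$, whose open orbit is $e_B\star e_{A''}=(e_A\star e_{A'})\star e_{A''}$. Therefore $e_C=(e_A\star e_{A'})\star e_{A''}$, and projecting instead to $(p_1,p_2,p_4)$ gives symmetrically $e_C=e_A\star(e_{A'}\star e_{A''})$.

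The main obstacle is the density interchange in the last paragraph: one must ensure that imposing the open-orbit condition $(p_1,p_3)\in e_B$, rather than merely $(p_1,p_3)\in S(A,A')$, does not shrink the generic fibre of $\rho$ and so does not alter the resulting open orbit. This rests on the openness of $\pi$, which forces $e_B$ to be open dense in $S(A,A')$, together with the irreducibility of $T$ and of its coordinate projections; the finiteness of orbits then upgrades \emph{dense} to \emph{contains the unique open orbit}, which is precisely what makes both bracketings single out the same $e_C$.
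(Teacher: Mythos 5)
Your proof is correct and takes essentially the same route as the paper: both rest on the irreducibility of the fourfold configuration variety of flags $(p_1,p_2,p_3,p_4)$, the unique open orbit in its projection to $(p_1,p_4)$, and the identification of each bracketing with that orbit via the intermediate projections to triples. The only cosmetic differences are that you establish irreducibility by fibring over $\Delta^{-1}(e_A\times e_{A'})$ rather than realising the variety as the image of $\mathrm{Aut}(f_3)\times \mathrm{Aut}(f_5)\times \gl(V)$, and that you pin down the open orbit by comparing closures of images where the paper intersects two dense open subsets of the triple variety.
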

\begin{proof}
We need only to show that $\star$ is associative, that is, for any 
$[f_1, f_2], [f_3, f_4], [f_5, f_6]\in \mathcal{F}
\times \mathcal{F}/\gl(V)$, $$ ([f_1, f_2]\star[f_3, f_4])
\star [f_5, f_6] = [f_1, f_2]\star ([f_3, f_4] \star[f_5, f_6])$$

Following the definition, we see that if one side of the equality is zero, then
so is the other side. We now suppose that both sides are not zero, that is,
$f_2\simeq f_3$ and $f_4\simeq f_5$. By change of basis we may assume
that $f_2=f_3$ and $f_4=f_5$. Let \\

\begin{tabular} {ll}
& $T_1=\{(p_1, p_2, p_3, p_4)\mid (p_1, p_2)\simeq (f_1, f_2), (p_2, p_3)
\simeq (f_3, f_4), (p_3, p_4)\simeq (f_5, f_6)\},$ \\ &  $T_2=
\{(p_1, p_3, p_4)\mid \exists \; p \mbox{ such that } (p_1, p)\simeq (f_1, f_2),
(p, p_3)\simeq (f_3, f_4), (p_3, p_4)\simeq (f_5, f_6)\},$ \\ &
$T_3=\{(p_1, p_2, p_4)\mid  \exists \; p \mbox{ such that }(p_1, p_2)\simeq
(f_1, f_2), (p_2, p)\simeq (f_3, f_4), (p, p_4)\simeq (f_5, f_6)\}, $ \\ &
$T_4=\{(p_1, p_4)\mid \exists \; p, p' \mbox{ such that }(p_1, p)\simeq (f_1,
f_2), (p, p')\simeq (f_3, f_4), (p', p_4)\simeq (f_5, f_6)\}.
$\end{tabular} \\

We have natural surjections $$\pi_{ij}: T_i\ra T_j$$ for $(i, j)=(1, 2), (1, 3),
(2, 4), (3, 4)$. Similar to the proof of Lemma \ref{irred}, we see that
$T_1$ is irreducible, and so the closures of all the $T_i$ are irreducible.
In particular, there is a unique open orbit $\mathcal{O}$ in $T_4$. Then
$\pi_{24}^{-1}(\mathcal{O})$ intersects with the open subset of $T_2$,
consisting of triples $(p_1, p_3, p_4)$ with $[p_1, p_3]$ open in $S(A,A')$.
That is, $([f_1, f_2]\star[f_3, f_4])\star [f_5, f_6] $ is the open orbit
$\mathcal{O}$ in $T_4$. Similarly, $[f_1, f_2] \star ([f_3, f_4] \star[f_5, f_6])$
is also the open orbit  $\mathcal{O}$. Therefore the equality holds and so
$\star$ is associative.
\end{proof}

The following is a direct consequence of the definition of the product in
$G(n,r)$.

\begin{corollary}
The set $\mathcal{F}\times \mathcal{F}/\gl(V)$ is a multiplicative basis of
$G(n,r)$.
\end{corollary}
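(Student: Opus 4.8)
The plan is to unwind the definitions, since being a multiplicative basis is precisely the condition that $\mathcal{F}\times\mathcal{F}/\gl(V)$ is a basis of $G(n,r)$ and that the product of any two of its elements lies in $(\mathcal{F}\times\mathcal{F}/\gl(V)) \cup \{0\}$; equivalently, every structure constant is $0$ or $1$ and each product is supported on at most one basis vector. So the claim reduces to two observations, neither of which requires any computation beyond what the definition of $\star$ already supplies.

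First I would note that $\mathcal{F}\times\mathcal{F}/\gl(V)$ is a $\mathbb{Z}$-basis of $G(n,r)$ by construction: the algebra $G(n,r)$ was defined as the free $\mathbb{Z}$-module on this very set of orbits, so there is nothing to verify on the module side. Second, I would read off the product directly from its definition. For any two orbits $e_A, e_{A'}$, the product $e_A \star e_{A'}$ is set equal to the single orbit $e_{A''}$ when $S(A,A')$ is non-empty — where $e_{A''}$ is the unique open orbit in $S(A,A')$ furnished by Corollary \ref{uniqueorbit} — and equal to $0$ when $S(A,A')$ is empty. In either case the product of two basis elements is again a basis element or zero, and never a nontrivial $\mathbb{Z}$-linear combination of distinct orbits. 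That is exactly the defining property of a multiplicative basis, and the corollary follows.

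I do not anticipate any genuine obstacle: the only point that leans on earlier work is the well-definedness of $\star$ as returning a single orbit, and this is precisely the content of Corollary \ref{uniqueorbit}, whose uniqueness of the open orbit guarantees that the support of each product has size at most one. Once that corollary is in hand, the whole statement is an immediate consequence of reading the definition of the $\star$-product, which is why it appears here as a corollary rather than a theorem.
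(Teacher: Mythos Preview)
Your proposal is correct and matches the paper's treatment exactly: the paper states the corollary as ``a direct consequence of the definition of the product in $G(n,r)$'' and gives no further argument. Your write-up simply makes explicit what that one-line justification means, namely that $G(n,r)$ is by definition free on the orbit set and that $\star$ returns either a single orbit (via Corollary \ref{uniqueorbit}) or zero.
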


In addition to the basis of $G(n,r)$ consisting of orbits $[f_1,f_2]$ we can 
also consider bases analogous to the bases $\mathcal{B}$ and $\mathcal{B}'$ 
defined in Section 4 for the $q$-Schur algebra.
We conclude this section by showing that these three bases of $G(n,r)$
coincide.

\begin{lemma}\label{genericmult}
Let $(f_1, f_2)\in \mathcal{F}\times \mathcal{F}$. Then $[f_1, f_1+f_2]
\star [f_1+f_2, f_2]=[f_1, f_2]=[f_1,f_1\cap f_2]\star [f_1\cap f_2,f_2]$.
\end{lemma}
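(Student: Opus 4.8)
The plan is to show that the product $[f_1, f_1+f_2] \star [f_1+f_2, f_2]$ equals $[f_1, f_2]$ by identifying $[f_1, f_2]$ as the open orbit in $S(A, A')$, where $e_A = [f_1, f_1+f_2]$ and $e_{A'} = [f_1+f_2, f_2]$. Recall from Lemma \ref{bases1} that in the $q$-Schur algebra the product expands as
$$[f_1, f_1+f_2][f_1+f_2, f_2] = [f_1, f_2] + \sum_{\{[f_1',f_2'] \mid f_1'+f_2' \subsetneq f_1+f_2\}} g_{A,A',A''}[f_1', f_2'],$$
so the orbits appearing in $S(A, A')$ are exactly $[f_1, f_2]$ together with orbits $[f_1', f_2']$ for which $f_1' + f_2' \subsetneq f_1 + f_2$. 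The key observation is that $S(A, A')$ consists of pairs $(p_1, p_4)$ admitting an intermediate $p$ with $(p_1, p) \simeq (f_1, f_1+f_2)$ and $(p, p_4) \simeq (f_1+f_2, f_2)$; for such a pair, both $p_1$ and $p_4$ are contained in $p$, so $p_1 + p_4 \subseteq p$, and $\dim(p_1 + p_4) \leq \dim p = \dim(f_1 + f_2)$, with equality precisely for the term $[f_1, f_2]$.

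First I would make this dimension count into an openness statement. The function $(p_1, p_4) \mapsto \dim(p_1 + p_4)$ is upper semicontinuous, so the locus where it attains its maximal value $\dim(f_1 + f_2)$ is open in the (irreducible, by Lemma \ref{irred}) set $S(A, A')$; and by the expansion above this maximal locus is a single orbit, namely $[f_1, f_2]$. Being a nonempty open subset of an irreducible set, it is dense, hence is the unique open orbit guaranteed by Corollary \ref{uniqueorbit}. Therefore $[f_1, f_1+f_2] \star [f_1+f_2, f_2] = [f_1, f_2]$ by the definition of $\star$.

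For the second equality $[f_1, f_1 \cap f_2] \star [f_1 \cap f_2, f_2] = [f_1, f_2]$, I would argue dually, using the remark following Lemma \ref{bases1} that there is a similar product formula with $f_1 \cap f_2$ in place of $f_1 + f_2$, whose leading term is again $[f_1, f_2]$ and whose remaining terms $[f_1', f_2']$ satisfy $f_1' \cap f_2' \supsetneq f_1 \cap f_2$. Here the relevant semicontinuous function is $(p_1, p_4) \mapsto \dim(p_1 \cap p_4)$, which is lower semicontinuous (equivalently, use $\dim p_1 + \dim p_4 - \dim(p_1 \cap p_4) = \dim(p_1+p_4)$ and the fact that the outer dimensions are fixed by the orbit data); the locus where $\dim(p_1 \cap p_4)$ is \emph{minimal}, equal to $\dim(f_1 \cap f_2)$, is the open orbit $[f_1, f_2]$. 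The same irreducibility and density argument then identifies it as the unique open orbit.

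The main obstacle I anticipate is justifying cleanly that $[f_1, f_2]$ genuinely is the maximal-$\dim(p_1+p_4)$ orbit rather than merely \emph{an} orbit of that dimension — that is, ruling out a second orbit tying for the maximum. This is where I would lean on Lemma \ref{bases1}: the product formula shows every other orbit in $S(A,A')$ has $f_1' + f_2' \subsetneq f_1 + f_2$, hence strictly smaller dimension of the sum, so the tie cannot occur and the maximal locus is a single orbit. One should also confirm the intermediate flag $p$ can be recovered or bounded appropriately so that the constraint $p_1 + p_4 \subseteq p$ is the right one; this follows directly from the containments in the definitions of $S(A,A')$ via the sets $T_i$ in the associativity proof. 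With these two inputs the argument is essentially a semicontinuity-plus-irreducibility routine.
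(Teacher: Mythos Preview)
Your approach is essentially the paper's: use Lemma~\ref{bases1} to see that $[f_1,f_2]$ is the unique orbit in $S(A,A')$ with $\dim(p_1+p_4)$ maximal, observe that this maximality is an open condition, and invoke irreducibility (Lemma~\ref{irred}) to conclude it is the open orbit. One terminological slip: the function $(p_1,p_4)\mapsto \dim(p_1+p_4)$ is \emph{lower} semicontinuous (high rank is open), not upper semicontinuous, and dually $\dim(p_1\cap p_4)$ is upper semicontinuous --- your conclusions about which loci are open are correct, but the labels are reversed.
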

\begin{proof}
We prove the first equality. Let $e_{A}=
[f_1, f_1+f_2]$ and $e_{A'}=[f_1+f_2, f_2]$. We prove that the orbit
$[f_1,f_2]$ is open in $S(A,A')$. For any $(f'_1,f'_2)\in S(A,A')$, 
$f'_1+f'_2$ is isomorphic to a subflag of $f_1+f_2$. By Lemma \ref{bases1}, for
$(f '_1, f '_2) \in S(A, A')$, we have $(f '_1, f '_2)\simeq (f_1, f_2)$
if and only if $f '_1+f '\simeq f_1+f_2$. That the dimension of
$f'_1+f'_2$ is maximal is an open condition, and therefore
$e_{A}\star e_{A'}=[f_1,f_2]$.

Similarly,  $[f_1, f_1\cap f_2] \star [f_1\cap f_2, f_2]=[f_1, f_2]$.
\end{proof}

We now prove that the $\mathbb{Z}$-algebra
$G(n,r)$ is generated by the orbits $e_{i,\underline d}$, $f_{i,\underline d}$
and $k_{\underline d}$.
Recall that a representation $X$ is said to be a generic extension of $N$
by $M$, if the stabiliser of $X$ is minimal among all representations that
are extensions of $N$ by $M$. 

\begin{lemma} \cite{Su} \label{sulemma}
Let $f\supseteq g\supseteq h$ be flags. Then $[f,h]=[f,g]\star[g,h]$
if and only if $f/h$ is a generic extension of $f/g$ by $g/h$.
\end{lemma}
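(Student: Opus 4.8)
The plan is to make the product $\star$ explicit in this non-negative situation and then identify the unique open orbit of $S(A,A')$ with the generic extension by a dimension count. First I would describe the orbits occurring in $S(A,A')=\pi\Delta^{-1}([f,g]\times[g,h])$. Every contributing triple $(p_1,p_2,p_3)$ satisfies $p_3\subseteq p_2\subseteq p_1$ with $p_1\simeq f$, $p_2\simeq g$, $p_3\simeq h$, so after projecting we obtain pairs $(p_1,p_3)$ with $p_3\subseteq p_1\simeq f$; since $p_3$ is a submodule of the projective representation $f$ it is again projective, and having the dimension vector of $h$ it is isomorphic to $h$. Conversely, for any $p_3\subseteq f$ with $L:=f/p_3$ an extension $0\to N\to L\to M\to 0$ of $M=f/g$ by $N=g/h$, the preimage $p_2$ of $N\subseteq L$ has the dimension vector of $g$ and is projective, hence $p_2\simeq g$, so $(f,p_3)\in S(A,A')$. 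By Lemma \ref{orbit1} the pair $(f,p_3)$ is determined up to the diagonal $\gl(V)$-action by $p_1\simeq f$ together with the isomorphism class of $L=f/p_3$. Thus the orbits in $S(A,A')$ are in bijection with such extensions $L$, and $[f,h]$ corresponds to the given extension $L_0=f/h$.

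Next I would compute orbit dimensions after normalising $p_1=f$. For $p_3\subseteq f$ put $L=f/p_3$. Since $f$ is projective, any automorphism of $L$ lifts to an automorphism of $f$ preserving $\ker(f\to L)=p_3$, giving a surjection $\mathrm{Stab}_{\gl(V)}(f,p_3)\to \mathrm{Aut}(L)$ whose kernel consists of the invertible maps $\mathrm{id}+\psi$ with $\psi\in\hom_\Lambda(f,p_3)$; this kernel is an open subset of the vector space $\hom_\Lambda(f,p_3)$. Hence
$$\dim\mathrm{Stab}_{\gl(V)}(f,p_3)=\dim\smorphism_\Lambda(L)+\dim\hom_\Lambda(f,p_3).$$
Because $f$ is fixed and $p_3\simeq h$ throughout $S(A,A')$, the term $\dim\hom_\Lambda(f,p_3)=\dim\hom_\Lambda(f,h)$ is a constant $c$, so
$$\dim[f,p_3]=\dim\gl(V)-\dim\smorphism_\Lambda(L)-c.$$
Therefore the dimension of the orbit is strictly decreasing in $\dim\smorphism_\Lambda(L)$.

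Finally I would conclude. By Lemma \ref{irred} and Corollary \ref{uniqueorbit}, $S(A,A')$ is irreducible with a unique open, hence maximal-dimensional, orbit; by the displayed formula this is the orbit attached to the extension $L$ with $\dim\smorphism_\Lambda(L)$ minimal, equivalently with minimal stabiliser in the representation variety, which is exactly the generic extension of $M$ by $N$ in the sense of \cite{Reineke}. One checks that the generic extension actually occurs as some $(f,p_3)\in S(A,A')$: its minimal projective cover is a direct summand of that of $L_0$ (tops only grow under degeneration), so $f$ is large enough to surject onto it, and the kernel, being a submodule of a projective with the dimension vector of $h$, is $\simeq h$. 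Consequently the minimum of $\dim\smorphism_\Lambda$ over the extensions present in $S(A,A')$ coincides with the global minimum and is uniquely attained. Hence $[f,g]\star[g,h]=[f,h]$ iff $[f,h]$ is this open orbit iff $L_0=f/h$ is the generic extension of $f/g$ by $g/h$. The main obstacle is the middle step: verifying that the only non-constant contribution to the stabiliser dimension is $\dim\smorphism_\Lambda(L)$, and in particular that $\dim\hom_\Lambda(f,p_3)$ stays constant — this rests on $p_3$ being forced into the isomorphism class of $h$, which in turn uses that submodules of projectives over $\Lambda$ are projective.
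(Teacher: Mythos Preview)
The paper does not prove this lemma; it is quoted from \cite{Su} without argument. So there is no in-paper proof to compare against. Your argument is essentially a correct self-contained proof, and it is the natural one: reduce via Lemma~\ref{orbit1} to showing that orbits in $S(A,A')$ are parametrised by isomorphism classes of quotients $L=f/p_3$ that are extensions of $M=f/g$ by $N=g/h$, and then identify the open orbit by minimising $\dim\smorphism_\Lambda(L)$.

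Two small points are worth tightening. First, the line ``any automorphism of $L$ lifts to an automorphism of $f$'' is asserted too quickly: a lift of an automorphism along a non-minimal projective cover need not be invertible. What is true, and is all you need, is that the linear map $\smorphism_\Lambda(f,p_3)\to\smorphism_\Lambda(L)$ (endomorphisms of $f$ preserving $p_3$, mapped to the induced endomorphism of $L$) is surjective by projectivity of $f$, with kernel $\hom_\Lambda(f,p_3)$; since the stabiliser is Zariski-open in the endomorphism algebra, your dimension formula $\dim\mathrm{Stab}_{\gl(V)}(f,p_3)=\dim\smorphism_\Lambda(L)+\dim\hom_\Lambda(f,p_3)$ follows without ever lifting automorphisms. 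Second, the bijection between orbits in $S(A,A')$ and extensions $L$ is only with those extensions admitting a surjection from $f$; you correctly handle this by checking separately that the global generic extension $L_{\mathrm{gen}}$ does appear, via $L_{\mathrm{gen}}\leq_{deg} f/h$ and upper semicontinuity of $\dim\hom(S_i,-)$ (so that the projective cover of $L_{\mathrm{gen}}$ is a summand of $f$). With these two clarifications the proof is complete.
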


For an interval $[i,j]$ in $\{1,\cdots,n\}$ and $\underline d\in D(n,r)$
with $\underline d-\alpha_{j+1}$ non-negative, let 
$$e(i,j,\underline d)=e_{i,\underline d+
\alpha_{i+1}-\alpha_{j+1}}\star\cdots \star e_{j,\underline d}.$$
Similaly, let $f(i,j,\underline d)= f_{j,\underline d-
\alpha_{i+1}+\alpha_{j+1}}\star \cdots 
\star f_{i,\underline d}$ for $\underline d-\alpha_{i+1}$ non-negative.

\begin{lemma} \label{genindec}
Let $f\supseteq h$ be flags with $h\in \mathcal{F}_{\underline d}$ and
$f/h\simeq M_{ij}$. Then $[f,h]=e(i,j,\underline d)$ and $[h,f]=f(i,j,\underline d+\alpha_{i}-\alpha_{j+1})$.
\end{lemma}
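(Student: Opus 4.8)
The plan is to prove the statement by induction on the length $j-i$ of the interval, using Lemma \ref{sulemma} to reduce the computation of $\star$-products to a statement about generic extensions, and Lemma \ref{genindec} applied to smaller intervals together with the elementary generators $e_{j,\underline d}$ (the case $i=j$). First I would treat the base case $i=j$: here $f/h\simeq M_{ii}=S_i$ is simple, and $e(i,i,\underline d)=e_{i,\underline d}$ by definition, so $[f,h]=e_{i,\underline d}$ holds directly from the definition of $e_{i,\underline d}$ as the orbit $[f,f']$ with $f/f'\simeq S_i$.

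For the inductive step I would write $e(i,j,\underline d)=e(i,j-1,\underline d')\star e_{j,\underline d}$ (after peeling off the rightmost factor $e_{j,\underline d}$ and regrouping, using associativity established earlier), where $\underline d'=\underline d+\alpha_j-\alpha_{j+1}$ is the decomposition reached after the first $\star$-multiplication by $e_{j,\underline d}$. Geometrically, multiplying by $e_{j,\underline d}$ corresponds to choosing an intermediate flag $g$ with $f\supseteq g\supseteq h$, $g/h\simeq S_j$ and $f/g\simeq M_{i,j-1}$. By Lemma \ref{genindec} for the shorter interval $[i,j-1]$ we have $[f,g]=e(i,j-1,\underline d')$, and by the base case $[g,h]=e_{j,\underline d}$. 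It then remains to verify, via Lemma \ref{sulemma}, that $[f,h]=[f,g]\star[g,h]$, i.e. that $M_{ij}=f/h$ is the generic extension of $M_{i,j-1}=f/g$ by $S_j=g/h$.

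The main obstacle I expect is exactly this last generic-extension computation: one must check that the uniserial module $M_{ij}$ is the generic (minimal-stabiliser) extension of $M_{i,j-1}$ by $S_j$. For the linear quiver $\Lambda$ this is a short computation in representation theory: the extensions of $S_j$ by $M_{i,j-1}$ are parametrised by $\ext^1(M_{i,j-1},S_j)\cong k$, and the non-split extension is the indecomposable $M_{ij}$, whose automorphism group (hence stabiliser under $\prod_l\gl(X_l)$) is strictly smaller than that of the split extension $M_{i,j-1}\oplus S_j$; thus $M_{ij}$ is the generic extension. One should also record that the interval $[i,j-1]$ computation feeds the correct decomposition: the shift $\alpha_i-\alpha_{j+1}$ in the indexing matches the dimension-vector bookkeeping, since adding $M_{i,j-1}$ on top of $h$ raises the multiplicities along vertices $i,\dots,j-1$ appropriately. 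The argument for the dual statement $[h,f]=f(i,j,\underline d+\alpha_i-\alpha_{j+1})$ is entirely symmetric, replacing submodules by quotients and the $e$'s by $f$'s, and I would simply note this symmetry rather than repeat the computation.
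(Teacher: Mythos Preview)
Your proposal is correct and follows essentially the same argument as the paper: induction on $j-i$, base case $i=j$ immediate from the definition of $e_{i,\underline d}$, inductive step by inserting an intermediate flag $g$ with $g/h\simeq S_j$ and $f/g\simeq M_{i,j-1}$, and then invoking Lemma~\ref{sulemma} together with the fact that $M_{ij}$ is the generic extension of $M_{i,j-1}$ by $S_j$. Your write-up is in fact more explicit than the paper's, which asserts the generic-extension step without further comment; your $\ext^1$ computation fills that in.
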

\begin{proof}
If $i=j$, then $[f,h]=e_{i,\underline d}$. Now assume $j>i$.
Then there is $f\supseteq g\supseteq h$ with
$f/g\simeq M_{i,j-1}$ and $g/h\simeq M_{jj}$. Since $f/h$ is a generic
extension of $f/g$ by the simple $g/h$, the
lemma follows from Lemma \ref{sulemma} by induction.
\end{proof}

Using the order $\leq$ on representations defined in Section 3, we can write
each orbit $[f,g]$ with $f\supseteq g$ as a product over indecomposable 
summands of $f/g$.

\begin{lemma} \label{genallmod} Let $f\supseteq g$  be flags with $f/g\simeq 
\bigoplus_{i=1}^t M_i$ and $M_{i}\leq M_{i+1}$. Then there is a
filtration $f=f_t \supset f_{t-1} \supset \cdots \supset f_0 =
g \supset 0$ with indecomposable factors $M_i=f_i/f_{i-1}$
and $[f,g]=[f_t,f_{t-1}]\star\cdots\star[f_1,f_0]$.
\end{lemma}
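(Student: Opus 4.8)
The plan is to prove the statement by induction on $t$, the number of indecomposable summands of $f/g$, using the generic extension characterisation in Lemma \ref{sulemma} together with the ordering properties of the indecomposables $M_{ij}$. The base case $t=1$ is immediate: $f/g$ is already indecomposable, so $[f,g]=[f_1,f_0]$ with $f_1=f$, $f_0=g$, and there is nothing to prove.

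For the inductive step, suppose $f/g\simeq\bigoplus_{i=1}^t M_i$ with $M_i\leq M_{i+1}$. First I would produce the filtration. Since the $M_i$ are ordered so that $M_1$ is smallest, I would choose the bottom step $f_1\supseteq f_0=g$ so that $f_1/g\simeq M_1$; concretely, one can decompose $f/g$ along the ordered summands and take $f_{i}/g$ to be the sum of the first $i$ summands, giving a filtration $f=f_t\supset\cdots\supset f_0=g$ with $f_i/f_{i-1}\simeq M_i$. The point of the ordering $M_i\leq M_{i+1}$ is that, by the observation in Section 3 that $M_{ij}\leq M_{i'j'}$ implies $\ext^1(M_{i'j'},M_{ij})=0$, the larger summand $M_t$ has no self-extensions downward, so the direct sum $f/g$ is itself a generic extension of $f_{t}/f_{t-1}\simeq M_t$ by $f_{t-1}/g$. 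This is the key ingredient.

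Next I would apply Lemma \ref{sulemma} to the chain $f\supseteq f_{t-1}\supseteq g$. Because $f/g\simeq M_t\oplus(f_{t-1}/g)$ is a generic extension of $M_t=f/f_{t-1}$ by $f_{t-1}/g$ — the extension being split, hence automatically generic once $\ext^1$ vanishes — Lemma \ref{sulemma} gives
$$[f,g]=[f,f_{t-1}]\star[f_{t-1},g].$$
The factor $f_{t-1}/g\simeq\bigoplus_{i=1}^{t-1}M_i$ is a flag quotient with $t-1$ ordered indecomposable summands, so by the induction hypothesis
$$[f_{t-1},g]=[f_{t-1},f_{t-2}]\star\cdots\star[f_1,f_0].$$
Substituting and using $[f,f_{t-1}]=[f_t,f_{t-1}]$ yields the claimed factorisation $[f,g]=[f_t,f_{t-1}]\star\cdots\star[f_1,f_0]$.

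The main obstacle I anticipate is verifying carefully that the ordering $M_i\leq M_{i+1}$ really guarantees that the split extension $f/g$ is \emph{generic} as an extension of the top factor by the rest, i.e.\ that the split sequence minimises the stabiliser dimension among all extensions of $M_t$ by $f_{t-1}/g$. The vanishing $\ext^1(M_t,M_i)=0$ for all $i<t$ forces every extension of $M_t$ by $f_{t-1}/g$ to split, so there is only one extension up to isomorphism and it is trivially generic; the delicate point is simply to invoke the correct direction of the $\ext^1$ vanishing (extensions \emph{of} the large module \emph{by} smaller ones) and to match it to the convention for generic extensions used in Lemma \ref{sulemma}. Once this compatibility is pinned down, the induction runs cleanly and the rest is routine bookkeeping on the filtration.
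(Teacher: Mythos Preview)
Your proof is correct and follows exactly the approach indicated in the paper: the paper's proof reads in its entirety ``The lemma follows from the vanishing of extension groups along the filtration and Lemma \ref{sulemma}'', and your induction peeling off the top factor $M_t$ and invoking $\ext^1(M_t,M_i)=0$ for $i<t$ to force genericity of the split extension is precisely a detailed unpacking of that sentence. Your care about matching the direction of $\ext^1$-vanishing to the convention in Lemma \ref{sulemma} is well placed and correctly resolved.
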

\begin{proof}
The lemma follows from the vanishing of extension groups along the filtration and Lemma
\ref{sulemma}.
\end{proof}

\begin{lemma} \label{generatorsGNR}
The $\mathbb{Z}$-algebra $G(n,r)$ is generated by the orbits $e_{i,\underline d}$,
$f_{i,\underline d}$ and $k_{\underline d}$.
\end{lemma}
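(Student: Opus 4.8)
The plan is to show that every basis element $[f_1,f_2]$ of $G(n,r)$ lies in the subalgebra generated by the orbits $e_{i,\underline d}$, $f_{i,\underline d}$ and $k_{\underline d}$. The starting point is Lemma \ref{genericmult}, which expresses an arbitrary orbit as a $\star$-product of two orbits coming from the non-negative and non-positive parts, namely
$$[f_1,f_2]=[f_1,f_1+f_2]\star[f_1+f_2,f_2],$$
where $f_1+f_2\supseteq f_1$ and $f_1+f_2\supseteq f_2$. This reduces the problem to the two cases $f\supseteq g$ (a non-negative orbit $[f,g]$) and its mirror image $[g,f]$ (a non-positive orbit), since a product of generators is again a product of generators.

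Next I would treat the non-negative case. By Lemma \ref{genallmod}, any orbit $[f,g]$ with $f\supseteq g$ factors as a $\star$-product
$$[f,g]=[f_t,f_{t-1}]\star\cdots\star[f_1,f_0]$$
over the indecomposable summands $M_1\leq\cdots\leq M_t$ of $f/g$, where each factor $[f_i,f_{i-1}]$ has indecomposable quotient $f_i/f_{i-1}\simeq M_i$. Each such factor is, by Lemma \ref{genindec}, equal to some $e(i,j,\underline d)$, which by definition is already a $\star$-product of the generators $e_{k,\underline d'}$. Hence $[f,g]$ lies in the subalgebra generated by the $e$'s and $k$'s. The non-positive case $[g,f]$ is handled identically, using the second halves of Lemmas \ref{genindec} and \ref{genallmod} to write it as a $\star$-product of the $f(i,j,\underline d)$, and therefore of the generators $f_{k,\underline d'}$.

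Combining the three pieces via Lemma \ref{genericmult} expresses $[f_1,f_2]$ as a $\star$-product of elements $e_{i,\underline d}$, $f_{i,\underline d}$ and $k_{\underline d}$, which proves the lemma. Since all the structural work has been front-loaded into Lemmas \ref{genericmult}, \ref{genindec} and \ref{genallmod}, there is no serious obstacle here; the only point requiring a little care is the bookkeeping of dimension vectors, i.e.\ checking that the indices $\underline d$ appearing in the factorisations land in $D(n,r)$ so that the generators invoked are genuinely nonzero. This is automatic because each intermediate flag in the filtrations supplied by Lemma \ref{genallmod} is an honest member of $\mathcal{F}$, so its decomposition lies in $D(n,r)$, and the decompositions along the filtration change by exactly the root increments built into the definitions of $e(i,j,\underline d)$ and $f(i,j,\underline d)$.
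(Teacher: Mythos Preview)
Your proposal is correct and follows the same approach as the paper, which likewise invokes Lemmas \ref{genindec} and \ref{genallmod} to handle orbits $[f,g]$ with $f\supseteq g$ and then concludes via Lemma \ref{genericmult}. One minor remark: Lemma \ref{genallmod} as stated only treats the non-negative case $[f,g]$, so the ``second half'' you cite for $[g,f]$ is not literally there, though the dual statement follows by the same argument (or via the anti-involution $[f,f']\mapsto[f',f]$), and the paper leaves this symmetry implicit as well.
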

\begin{proof}
Lemma \ref{genindec} and Lemma \ref{genallmod} imply that 
any orbit $[f,g]$ with
$f\supseteq g$ is in the subalgebra of $G(n,r)$ generated by $e_{i,\underline d}$ and $k_{\underline d}$. The lemma now follows from Lemma \ref{genericmult}.

\end{proof}

Following Lemma \ref{genericmult}, \ref{genindec} and \ref{genallmod}, we obtain
the following basis of $G(n,r)$ in terms 
the generators $e_{i,\underline d}$ and $f_{i,\underline d}$.

\begin{lemma} \label{monbasis}
The $\mathbb{Z}$-algebra $G(n,r)$ has a basis consisting of all $k_{\underline d}$ and all non-zero monomials
$$e(i_s,j_s,\underline d_s)\star \cdots \star e(i_1,j_1,\underline d_1)
\star f(i'_1,j'_1,\underline d'_1) \star \cdots \star f(i'_t,j'_t,\underline d'_t),$$
where $M_{i_lj_l}\leq M_{i_{l+1}j_{l+1}}$, $M_{i'_lj'_l}\leq M_{i'_{l+1}j'_{l+1}}$
and $\underline  d_1\geq \sum_{l}\alpha_{j_l+1}+\sum_l\alpha_{j'_l+1}.$ 
\end{lemma}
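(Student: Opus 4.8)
The plan is to establish that the monomials listed, together with the idempotents $k_{\underline d}$, form a basis by exhibiting a bijection between these monomials and the natural orbit basis $\mathcal{F}\times\mathcal{F}/\gl(V)$ of $G(n,r)$, and then showing the monomials are nonzero and distinct. The starting point is Lemma \ref{monbasis}'s companion results: by Lemma \ref{genericmult} every orbit $[f_1,f_2]$ factors as $[f_1,f_1+f_2]\star[f_1+f_2,f_2]$, where the first factor is a ``positive'' orbit ($f_1\subseteq f_1+f_2$) and the second a ``negative'' one. So the first step is to reduce to understanding positive orbits $[f,g]$ with $g\subseteq f$ and negative orbits $[g,f]$ separately, since the decomposition cleanly separates the $E$- and $F$-type generators.

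For a positive orbit $[f,g]$ with $f/g\simeq\bigoplus_{i=1}^t M_i$ and the summands ordered so that $M_i\leq M_{i+1}$, Lemma \ref{genallmod} gives a filtration realising $[f,g]=[f_t,f_{t-1}]\star\cdots\star[f_1,f_0]$, and Lemma \ref{genindec} identifies each indecomposable factor $[f_i,f_{i-1}]$ (with $f_i/f_{i-1}\simeq M_{i_lj_l}$) with a monomial $e(i_l,j_l,\underline d_l)$ in the $E$-generators. Composing these gives the positive part of the claimed monomial; the dimension-vector constraint $\underline d_l=\dvector f_{l-1}$ at each stage forces the starting decomposition of the whole monomial. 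The negative orbits are handled symmetrically using the $f(i',j',\underline d')$ factors and the analogous filtration, so the second step is simply to assemble these two halves and read off that every orbit basis element equals exactly one such monomial, with the inequalities $M_{i_lj_l}\leq M_{i_{l+1}j_{l+1}}$ and $M_{i'_lj'_l}\leq M_{i'_{l+1}j'_{l+1}}$ and the combined dimension condition $\underline d_1\geq\sum_l\alpha_{j_l+1}+\sum_l\alpha_{j'_l+1}$ recording precisely the data needed for the monomial to be defined and to land in $G(n,r)$.

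The crux is injectivity, i.e. that distinct monomials of the prescribed form give distinct (and nonzero) orbits, so that the surjection from monomials onto the orbit basis is in fact a bijection. Here I would argue that the ordered multiset of indecomposable summands of $f/g$ is an isomorphism invariant, so the ordering $M_i\leq M_{i+1}$ makes the factorisation in Lemma \ref{genallmod} canonical: the sequence $(i_l,j_l)$ is exactly the list of intervals appearing in the unique decomposition of $f/g$, written in increasing order. Likewise the decompositions $\underline d_l$ are forced by the dimension vectors along the canonical filtration. Thus the data $(i_l,j_l,\underline d_l)$ and $(i'_l,j'_l,\underline d'_l)$ are recovered from the orbit $[f_1,f_2]$, which gives injectivity. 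The nonzero monomials correspond bijectively to triples $(\underline d,[M],[N])$ as in the discussion following Lemma \ref{orbit1}, and these classify the orbits, so the count matches and we conclude that the listed elements form a basis.

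The main obstacle I expect is verifying that each $\star$-product along the filtration genuinely computes the intended orbit rather than a proper degeneration --- that is, checking the generic-extension hypothesis of Lemma \ref{sulemma} holds at every stage. This is guaranteed by the ordering condition $M_{i_lj_l}\leq M_{i_{l+1}j_{l+1}}$ precisely because, as noted after Theorem \ref{2basis} in Section 3, $M_{ij}\leq M_{i'j'}$ forces $\ext^1(M_{i'j'},M_{ij})=0$; when the relevant extension group vanishes the extension is split and hence generic, so each $\star$ behaves as claimed. The careful bookkeeping of the dimension vectors $\underline d_l$ (ensuring each generator $e(i_l,j_l,\underline d_l)$ is nonzero and that the product is composable as a path) is the remaining routine but essential check.
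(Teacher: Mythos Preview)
Your overall strategy is sound and close to the paper's, but there is one genuine slip that breaks the argument as written. You factor $[f_1,f_2]=[f_1,f_1+f_2]\star[f_1+f_2,f_2]$ and call the first factor ``positive''. In the paper's convention (Section~3), $[f,f']$ is positive when $f'\subseteq f$; since $f_1\subseteq f_1+f_2$, the factor $[f_1,f_1+f_2]$ is \emph{negative} and decomposes into $f$-generators, while $[f_1+f_2,f_2]$ is positive and decomposes into $e$-generators. Thus the $f_1+f_2$ factorisation produces monomials of the shape $F\star E$, not the $E\star F$ shape asserted in the lemma. (Your second paragraph then applies Lemma~\ref{genallmod} and Lemma~\ref{genindec}, which are stated for $f\supseteq g$, to an orbit where the containment goes the other way, so the inconsistency shows up there too.) The fix is immediate: use the other half of Lemma~\ref{genericmult}, namely $[f_1,f_2]=[f_1,f_1\cap f_2]\star[f_1\cap f_2,f_2]$. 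Then $[f_1,f_1\cap f_2]$ is genuinely positive and yields the $e$-part, $[f_1\cap f_2,f_2]$ yields the $f$-part, and $\underline d_1$ is the decomposition of $f_1\cap f_2$. With this correction your injectivity argument goes through: the isomorphism classes of $f_1/(f_1\cap f_2)$ and $f_2/(f_1\cap f_2)$ together with $\dvector(f_1\cap f_2)$ determine the orbit by Lemma~\ref{orbit1}, and conversely they are read off from it, so the ordered monomial data is recovered.

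For comparison, the paper argues in the reverse direction: it takes an arbitrary monomial satisfying the stated conditions and explicitly constructs flags $f,g$ with $[f,f\cap g]$ equal to the $e$-part and $[f\cap g,g]$ equal to the $f$-part, by splitting $P(\underline d_1)$ according to the decomposition $\underline d_1=\underline c+\sum_l\alpha_{j_l+1}+\sum_l\alpha_{j'_l+1}$ guaranteed by the inequality. This has the advantage of showing directly that the inequality on $\underline d_1$ is exactly what is needed for the monomial to be nonzero; in your approach you should still check that every orbit produces a monomial satisfying that inequality, which follows because $f_1\cap f_2\hookrightarrow f_1+f_2$ is a (not necessarily minimal) projective resolution of $(f_1+f_2)/(f_1\cap f_2)\simeq f_1/(f_1\cap f_2)\oplus f_2/(f_1\cap f_2)$, so $f_1\cap f_2$ contains the syzygy $\bigoplus_l P_{j_l+1}\oplus\bigoplus_l P_{j'_l+1}$ as a summand.
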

\begin{proof}
We need only show that for any such monomial $$e(i_s,j_s,\underline d_s)\star \cdots \star e(i_1,j_1,\underline d_1)
\star f(i'_1,j'_1,\underline d'_1) \star \cdots \star f(i'_t,j'_t,\underline d'_t)$$
there is an orbit $[f,g]$ such that 
$$[f,f\cap g]=e(i_s,j_s,\underline d_s)\star \cdots \star e(i_1,j_1,\underline d_1)
\mbox{ and } [f\cap g,g]= f(i'_1,j'_1,\underline d'_1)\star \cdots \star f(i'_t,j'_t,\underline d'_t).$$
Write $\underline d = \underline c + \underline d' + \underline d''$, where $\underline d'=\sum_{l}\alpha_{j_l+1}$ and $\underline d''=\sum_l\alpha_{j'_l+1}$. Consider
$P(d)$ as a flag in $V$, and decompose as
$P(d)=P(c)\oplus P(d')\oplus P(d'')$. Let $Q(d')$ and $Q(d'')$ be flags containing
$P(d')$ and $P(d'')$, respectively, such that 
$$Q(d')/P(d')\simeq \bigoplus M_{i_lj_l} \mbox{ and  } Q(d'')/P(d'')\simeq 
\bigoplus M_{i'_lj'_l}.$$  Let $f=P(c)\oplus Q(d')\oplus P(d'')$ and $g=P(c)\oplus P(d')\oplus Q(d'')$, then $[f,g]$ is an orbit as required.
\end{proof}

We compute the multiplication in $G(n,r)$ of an arbitrary element with
a generator.

\begin{lemma} \label{Lemma3.20Schur}
Let $e_A\subseteq \mathcal{F}_{\underline d} \times \mathcal{F}$.
\begin{itemize}
\item[i)] If $d_{i+1}>0$, then $e_{i,d}\star e_A=e_X$ where
$X=A+E_{i,p}-E_{i+1,p}$ and $p=max\{j\mid A_{i+1,j}>0\}$.
\item[ii)] If $d_{i}>0$, then $f_{i,d}\star e_A=e_Y$ where
$Y=A-E_{i,p}+E_{i+1,p}$ and $p=min\{j\mid A_{i,j}>0\}$.
\end{itemize}
\end{lemma}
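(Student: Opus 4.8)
The plan is to combine the known Beilinson--Lusztig--MacPherson multiplication formula with a direct genericity computation. First I would identify the matrix $B$ of $e_{i,\underline d}$: writing $e_{i,\underline d}=[f_1,f_2]$ with $f_2\subseteq f_1$ and $f_1/f_2\simeq S_i$, Lemma \ref{Lemma2.2} shows that $B-E_{i,i+1}$ is diagonal, so Lemma \ref{Lemma3.2} applies with $h=i$. Since $S(B,A)$ is by definition the union of the orbits occurring with non-zero coefficient in the product $e_Be_A$ in $S_q(n,r)$, that lemma tells us the orbits in $S(B,A)$ are exactly the $e_X$ with $X=A+E_{i,p}-E_{i+1,p}$ for $p\in\{j\mid A_{i+1,j}>0\}$. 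By Corollary \ref{uniqueorbit} precisely one of these is open, and the whole problem is to show that it is the one with $p=\max\{j\mid A_{i+1,j}>0\}$.

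Second, I would set up an explicit parametrisation of $S(B,A)$. Fixing representatives $e_A=[f_2,f_3]$ with $f_2:U_0\subseteq\cdots\subseteq U_n$ and $f_3:W_0\subseteq\cdots\subseteq W_n$, the flag $f_1$ differs from $f_2$ only in degree $i$, where $(f_1)_i=U_i+\tilde\ell$ for a line $\tilde\ell\subseteq U_{i+1}$ with image $\ell$ in $U_{i+1}/U_i$. One checks that every orbit in $S(B,A)$ is hit by a pair $(p_1,f_3)$, where $p_1$ ranges over the flags obtained from $f_2$ by such a line $\ell\in\mathbb{P}(U_{i+1}/U_i)$: the middle flag in $\Delta^{-1}(e_B\times e_A)$ may be fixed to $f_2$, and the residual $\mathrm{Aut}(f_2)$-freedom on $f_3$ can be absorbed into the diagonal $\gl(V)$-action and into the choice of $\ell$. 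This realises $S(B,A)$ as the surjective image of the irreducible family indexed by $\ell\in\mathbb{P}(U_{i+1}/U_i)$ together with $\gl(V)$.

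Third comes the computation itself. By Lemma \ref{Lemma2.2} the entries $A_{i+1,b}=\dim\big((U_i+U_{i+1}\cap W_b)/(U_i+U_{i+1}\cap W_{b-1})\big)$ are the graded dimensions of the filtration that $f_3$ induces on $U_{i+1}/U_i$; let $p$ be the largest index with $A_{i+1,p}>0$, the top nonzero piece. For a line $\ell$ lying outside the previous filtration step, an open dense condition, I would compute the dimensions of the subspaces $(U_i+\tilde\ell)\cap W_b$ and $(U_i+\tilde\ell)+U_{i+1}\cap W_b$ and show that $A(p_1,f_3)$ differs from $A$ only in column $p$, where row $i$ increases by one and row $i+1$ decreases by one, that is $A(p_1,f_3)=A+E_{i,p}-E_{i+1,p}=X$. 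Since this holds on a dense open set of $\ell$ and $S(B,A)$ is irreducible (Lemma \ref{irred}) with a unique open orbit, $e_X$ is that open orbit, proving (i). Part (ii) is dual: writing $f_{i,\underline d}=[g,f_2]$ with $g\subseteq f_2$ and $f_2/g\simeq S_i$, the moving flag $p_1\subseteq f_2$ now satisfies $f_2/p_1\simeq S_i$, so $(p_1)_i\subseteq U_i$ is a hyperplane containing $U_{i-1}$, parametrised by a hyperplane in $U_i/U_{i-1}$; a generic such hyperplane loses its dimension in the bottom nonzero graded piece of the induced filtration, i.e. in column $p=\min\{j\mid A_{i,j}>0\}$, yielding $Y=A-E_{i,p}+E_{i+1,p}$.

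The main obstacle is the dimension bookkeeping in the third step: one must verify that a generic line (respectively hyperplane) changes exactly one column of the matrix, and by exactly $\pm 1$ in rows $i$ and $i+1$, which requires tracking several sum-and-intersection dimensions and arguing that genericity of $\ell$ forces it to avoid all the relevant proper subspaces simultaneously. A secondary point requiring care is the reduction in the second step, namely that varying $\ell$ alone with the middle flag fixed already sweeps out a dense subset of $S(B,A)$, so that genericity of $\ell$ indeed detects the open orbit rather than some smaller stratum.
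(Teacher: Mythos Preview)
Your approach is correct but takes a genuinely different route from the paper. The paper's proof is much shorter: after using Lemma~\ref{Lemma3.2} to list the orbits appearing in $e_{i,\underline d}\cdot e_A$ (exactly as you do), it simply cites Lemma~2.2 of \cite{BLM}, which gives a closed formula for the dimension of the stabiliser of a pair of flags in terms of the matrix $A$, and reads off that among the matrices $A+E_{i,j}-E_{i+1,j}$ with $A_{i+1,j}>0$ the one with $j$ maximal yields the smallest stabiliser, hence the open orbit. Your argument instead parametrises $S(B,A)$ concretely by lines $\ell\in\mathbb{P}(U_{i+1}/U_i)$ and computes the matrix of the resulting pair for a generic $\ell$. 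This is more hands-on and entirely self-contained (no appeal to the BLM stabiliser formula), and it makes the geometry of the generic product transparent; the trade-off is the bookkeeping you flag in step three. That bookkeeping is manageable: once you show that row $i{+}1$ of $A(p_1,f_3)$ drops by one in column $p$ (which follows from the single open condition $\ell\not\subseteq F_{p-1}$), the change in row $i$ is forced, since the column sums of $A(p_1,f_3)$ depend only on $f_3$ and all rows other than $i,i{+}1$ are visibly unchanged. Your secondary worry about step two is also fine: fixing $(f_2,f_3)$ and letting $p_1$ vary does hit every orbit in $S(B,A)$, because any triple in $\Delta^{-1}(e_B\times e_A)$ can be moved by $\gl(V)$ so that its middle and right flags equal $(f_2,f_3)$.
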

\begin{proof}
We prove i). By Lemma \ref{Lemma3.2}, the orbit $e_X$ has a non-zero
coefficient in the product $e_{i,\underline d}\cdot e_A$ in $S_q(n,r)$. Now, by
Lemma 2.2 in \cite{BLM}, among all terms $A+E_{i,j}-E_{i+1,j}$ with
$A_{i+1,j}>0$, the elements in the orbit $e_X$ has the smallest stabiliser, and
so $e_{i,d}\star e_A=e_X$.

The proof of ii) is similar.
\end{proof}

\section{A geometric realisation of the $0$-Schur algebra} \label{real}

In this section we first give a presentation of $G(n,r)$ using quivers and
relations. Then we show that $S_0(n,r)$ and $G(n,r)$ are
isomorphic as $\mathbb{Z}$-algebras by an isomorphism which is the identity
on the closed orbits $e_{i,\underline d}$, $f_{i,\underline d}$ and
$k_{\underline d}$. Finally, we prove Theorem \ref{laterproof}.

\subsection{A presentation of $G(n,r)$}

Let $\Sigma(n,r), E_i$ and $F_i$ be as in Section 5. 
Let $$P_{ij}(0)=\left\{  \begin{matrix}E_i^2E_j-E_iE_jE_i
\mbox{ for } i=j-1, \\ -E_iE_jE_i+ E_jE^2_{i} \mbox{ for }
i=j+1, \\ E_{i}E_{j}-E_{j}E_{i}, \mbox{ otherwise;}\end{matrix}\right.$$
$$N_{ij}(0)=\left\{  \begin{matrix} -F_iF_jF_i+ F_jF^2_{i} \mbox{ for }
i=j-1, \\ F_i^2F_j-F_iF_jF_i \mbox{ for }  i=j+1, \\ F_{i}F_{j}-F_{j} F_{i},
\mbox{ otherwise; } \end{matrix}\right. $$ and $$C_{ij}(0)=E_{i}F_{j}-
F_{i}E_{j}-\delta_{ij} \sum_{\underline d} \lambda_{ij}(\underline d)
\cdot K_{\underline d},$$ where $$\lambda_{ij}(\underline d)=\left\{
\begin{matrix} 1 \mbox{ if } d_i>d_{i+1}=0,\\ -1  \mbox{  if }
 d_{i+1}>d_i=0, \\ 0 \mbox{ otherwise. } \end{matrix}\right.$$
That is, $P_{ij}(0), N_{ij}(0)$ and $C_{ij}(0)$
are obtained by evaluating $P_{ij}, N_{ij}$ and $C_{ij}$ at $q=0$.

Let $I_0(n,r)\subseteq \mathbb{Z}\Sigma(n,r)$ be the
ideal generated by $P_{ij,d}(0)$, $N_{ij,d}(0)$, and $C_{ij,d}(0)$.

\begin{lemma} 
$\mathbb{Z}\Sigma(n,r)/I_0(n,r)$ has a multiplicative basis of paths in $\Sigma(n,r)$.
\end{lemma}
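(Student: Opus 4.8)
The plan is to prove the statement by a noncommutative Gröbner basis argument, that is, by producing a confluent rewriting system from the generators of $I_0(n,r)$ and invoking the Diamond Lemma of Bergman. The essential point, which I would isolate first, is that after localising at a vertex every defining relation is a \emph{binomial}: a difference of at most two paths with coefficients $\pm 1$, where one of the two paths may be a vertex $K_{\underline d}$ (a path of length zero). For the Serre-type relations $P_{ij}(0)$ and $N_{ij}(0)$, and for the commutation relations (the case $|i-j|>1$, and $C_{ij}(0)$ for $i\neq j$) this is immediate from their definition. The delicate case is $C_{ii}(0)$, and here I would use the degeneration at $q=0$ explicitly.

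Concretely, consider $C_{ii,\underline d}(0)=E_iF_iK_{\underline d}-F_iE_iK_{\underline d}-\lambda_{ii}(\underline d)K_{\underline d}$. When $d_i$ and $d_{i+1}$ are both positive or both zero we have $\lambda_{ii}(\underline d)=0$ and the relation is the binomial $E_iF_iK_{\underline d}=F_iE_iK_{\underline d}$. When exactly one of them vanishes we have $\lambda_{ii}(\underline d)=\pm1$, but then the vanishing conventions $E_{i,\underline d}=0$ for $d_{i+1}=0$ and $F_{i,\underline d}=0$ for $d_i=0$ force one of the two quadratic monomials $E_iF_iK_{\underline d}$, $F_iE_iK_{\underline d}$ to be zero, so the relation reads $E_iF_iK_{\underline d}=K_{\underline d}$ (if $d_i>d_{i+1}=0$) or $F_iE_iK_{\underline d}=K_{\underline d}$ (if $d_{i+1}>d_i=0$). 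In every case $C_{ii,\underline d}(0)$ is a binomial equating a single path to a single path or to a vertex. This collapse of the would-be trinomial is exactly the feature that $q=0$ provides and that the generic presentation does not have.

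With binomiality established, I would fix an admissible order on paths (length first, then a lexicographic order induced by a chosen total order on the arrows $E_i,F_i$), oriented so that each relation rewrites its leading monomial to the other term, moving the $E$'s and $F$'s towards the normal form suggested by Lemma \ref{monbasis}. Each relation then becomes a rewriting rule sending one path to a single monomial, so the normal-form (irreducible) paths span $\mathbb{Z}\Sigma(n,r)/I_0(n,r)$, and by the Diamond Lemma they form a $\mathbb{Z}$-basis once every overlap ambiguity resolves. Checking confluence is the main obstacle: the overlaps to resolve are those among consecutive $E$-relations (the $q=0$ Serre overlaps such as $E_i^3E_j$ and $E_i^2E_jE_k$), the symmetric $F$-overlaps, and the mixed overlaps in which a commutation or $C$ rule meets an adjacent arrow or a Serre rule, including the configurations where a $C_{ii}$ rewrite produces a vertex that must then interact consistently with a neighbouring $E$ or $F$. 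I expect each of these finitely many types to resolve, with the $q=0$ binomiality keeping every branch monomial.

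Finally, multiplicativity is automatic from the shape of the rules. Since every rewriting step sends a single path to $\pm$ a single path or to a vertex, the normal form of an arbitrary path is $\pm$ a single normal-form path or $0$; in particular the product of two normal-form paths, being their concatenation, reduces to $\pm$ a single normal-form path or to $0$. Hence the irreducible paths constitute a multiplicative $\mathbb{Z}$-basis of $\mathbb{Z}\Sigma(n,r)/I_0(n,r)$, as claimed.
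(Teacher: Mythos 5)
Your key observation is exactly the paper's: every generator of $I_0(n,r)$, localised at vertices, is a binomial, and the only delicate case is $C_{ii,\underline d}(0)$, where the vanishing conventions ($E_{i,\underline d}=0$ when $d_{i+1}=0$, $F_{i,\underline d}=0$ when $d_i=0$) kill one of the two quadratic monomials precisely when $\lambda_{ii}(\underline d)\neq 0$, so the relation collapses to $E_iF_iK_{\underline d}-K_{\underline d}$ or $F_iE_iK_{\underline d}-K_{\underline d}$. Up to this point your argument coincides with the paper's proof.

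The gap is in what you do next. You route the conclusion through Bergman's Diamond Lemma, and that route makes the whole statement hinge on confluence of your rewriting system; but you never verify it, writing only that you ``expect'' the overlap ambiguities (the $q=0$ Serre overlaps, the mixed $C$/Serre overlaps, the $C_{ii}$-produces-a-vertex configurations) to resolve. This is not a formality: without confluence, the irreducible paths still span, but two distinct irreducible paths can be equal in the quotient, so they need not be linearly independent, and in general one must run Buchberger/Mora completion and possibly enlarge the set of rules before the Diamond Lemma applies. (There are also unaddressed side conditions: termination of your order, since the Serre rules preserve length and the orientation must be handled by the lex part consistently.) So as written the proof is incomplete at its decisive step. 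Moreover, the machinery is unnecessary, which is what the paper's one-line proof exploits: if an ideal in a path algebra is generated by elements each of which is a difference of two parallel paths (or a single path), then the quotient is spanned by images of paths, the kernel of the map onto the free module on equivalence classes of paths (generated by $upv\sim uqv$, with paths through a generator-path sent to zero) is exactly the ideal, and hence the nonzero classes form a basis; multiplicativity is automatic because $[p]\cdot[q]=[pq]$ is again a class or zero. No admissible order, normal forms, or overlap analysis is needed. I would either carry out the overlap computations in full or, better, replace the Gröbner framework by this equivalence-class argument, keeping your (correct) binomiality analysis as the substance of the proof.
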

\begin{proof}
The lemma holds since each relation $P_{ij,\underline d}(0)$,
$N_{ij,\underline d}(0)$, and  $C_{ij,\underline d}(0)$ is a 
binomial in $E_{i,\underline d}$, $F_{i,\underline d}$ and $K_{\underline d}$.
This is obvious for $P_{ij,\underline d}(0)$, $N_{ij,\underline d}(0)$. For  $C_{ij,\underline d}(0)$, if the coefficient of 
$K_{\underline d}$ is nonzero then either $C_{ij,\underline d}(0)=K_{\underline d}
E_{i}F_{j}K_{\underline d} - K_{\underline d}$ or $C_{ij,\underline d}(0)=
K_{\underline d} F_{i}E_{j}K_{\underline d}-K_{\underline d}$. 
\end{proof}

For an interval $[i,j]$ in $\{1,\cdots,n\}$ and $\underline d\in D(n,r)$
with $\underline d-\alpha_{j+1}$ non-negative, let 
$$E(i,j,\underline d)=E_{i,\underline d+
\alpha_{i+1}-\alpha_{j+1}}\cdots E_{j,\underline d}$$ 
and $F(i,j,\underline d)=F_{j,\underline d-\alpha_{i+1}+\alpha_{j+1}}\cdots F_{i,\underline d}$ for $\underline d-\alpha_{i+1}$ non-negative.

\begin{theorem} \label{presgnr}
The map $\eta:\mathbb{Z}\Sigma(n,r)/I_0(n,r)\rightarrow G(n,r)$ given by
$\eta(E_{i,\underline d})=e_{i,\underline d}$,  $\eta(F_{i,\underline d})=
f_{i,\underline d}$ and $\eta(K_{\underline d})=k_{\underline d}$ is an 
isomorphism of $\mathbb{Z}$-algebras.
\end{theorem}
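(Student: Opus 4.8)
The plan is to show that $\eta$ is a well-defined surjective algebra homomorphism and then prove injectivity by a dimension/basis count. First I would verify that $\eta$ is well defined, i.e. that the defining relations $P_{ij,\underline d}(0)$, $N_{ij,\underline d}(0)$ and $C_{ij,\underline d}(0)$ actually hold in $G(n,r)$ under the assignment $E_{i,\underline d}\mapsto e_{i,\underline d}$, $F_{i,\underline d}\mapsto f_{i,\underline d}$, $K_{\underline d}\mapsto k_{\underline d}$. Since these relations are obtained by specialising the $q$-Schur relations at $q=0$, and the generic product $\star$ computes open orbits, I would check each relation directly using Lemma \ref{Lemma3.20Schur}, which gives an explicit formula for $e_{i,\underline d}\star e_A$ and $f_{i,\underline d}\star e_A$. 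For instance the commutator relation $C_{ij}(0)$ comes down to comparing $e_{i}\star f_{j}$ with $f_{i}\star e_{j}$ using the positions $p=\max\{j\mid A_{i+1,j}>0\}$ and $p=\min\{j\mid A_{i,j}>0\}$ from that lemma, and the extra diagonal term $\lambda_{ij}(\underline d)k_{\underline d}$ should appear exactly in the boundary cases $d_i>d_{i+1}=0$ and $d_{i+1}>d_i=0$. The Serre-type relations $P_{ij}(0)$ and $N_{ij}(0)$ are handled similarly by iterating the same formula.

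Surjectivity is immediate: Lemma \ref{generatorsGNR} already states that $G(n,r)$ is generated as a $\mathbb{Z}$-algebra by the $e_{i,\underline d}$, $f_{i,\underline d}$ and $k_{\underline d}$, so $\eta$ is onto. For injectivity the strategy is a basis comparison. On the target side, Lemma \ref{monbasis} exhibits an explicit $\mathbb{Z}$-basis of $G(n,r)$ consisting of the $k_{\underline d}$ together with the monomials
$$e(i_s,j_s,\underline d_s)\star\cdots\star e(i_1,j_1,\underline d_1)\star f(i'_1,j'_1,\underline d'_1)\star\cdots\star f(i'_t,j'_t,\underline d'_t)$$
subject to the ordering constraints $M_{i_lj_l}\leq M_{i_{l+1}j_{l+1}}$, $M_{i'_lj'_l}\leq M_{i'_{l+1}j'_{l+1}}$ and the support condition on $\underline d_1$. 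These monomials are precisely the images under $\eta$ of the corresponding products of $E(i,j,\underline d)$ and $F(i,j,\underline d)$ in $\mathbb{Z}\Sigma(n,r)/I_0(n,r)$.

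The main work, and the step I expect to be the real obstacle, is to show that these ordered monomials already span $\mathbb{Z}\Sigma(n,r)/I_0(n,r)$ as a $\mathbb{Z}$-module. The preceding lemma guarantees a multiplicative basis of \emph{all} paths, but to match the basis of $G(n,r)$ I must use the relations to rewrite an arbitrary path into the ordered normal form. The Serre relations $P_{ij}(0)$ and $N_{ij}(0)$ let me reorder adjacent $E$'s (resp. $F$'s) into the prescribed order on the $M_{ij}$, and the commutation relations $C_{ij}(0)$ let me move every $F$-letter to the right of every $E$-letter, producing the shape $E\cdots E\,F\cdots F$; the binomial nature of all relations (each is a difference of two paths, possibly minus an idempotent) ensures that this rewriting terminates and introduces no new basis elements, so the spanning set is exactly the set of ordered monomials. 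Once spanning is established, the $\mathbb{Z}$-linear surjection $\eta$ sends a spanning set of size at most $|\mathcal{F}\times\mathcal{F}/\gl(V)|$ onto a genuine basis of $G(n,r)$ of exactly that cardinality, forcing the spanning monomials to be linearly independent and $\eta$ to be a bijection. Hence $\eta$ is an isomorphism of $\mathbb{Z}$-algebras. A subtlety to watch is confluence of the rewriting system — I would either invoke a diamond-lemma argument or, more cleanly, simply note that since $\eta$ maps the ordered monomials to a \emph{known} basis, any such monomial is nonzero and distinct in the image, so no collapsing can occur and the count is tight.
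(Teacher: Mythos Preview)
Your proposal is correct and follows essentially the same strategy as the paper: well-definedness via Lemma~\ref{Lemma3.20Schur}, surjectivity via Lemma~\ref{generatorsGNR}, and injectivity by rewriting every path into the ordered normal form of Lemma~\ref{monbasis} and then comparing with the known basis of $G(n,r)$. The paper carries out the spanning step you flag as the obstacle by an explicit induction on path length, the delicate point being to verify after each use of $C_{ij,\underline d}(0)$ that the resulting monomial still satisfies the support inequality $\underline d_1 \geq \sum_l\alpha_{j_l+1}+\sum_l\alpha_{j'_l+1}$; this is where a naive ``binomial rewriting terminates'' claim needs to be made precise.
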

\begin{proof}
By Lemma \ref{Lemma3.20Schur}, it is straightforward to check that $e_{i,\underline d}$, 
$f_{i,\underline d}$, and $k_{\underline d}$ satisfy the relations 
$P_{ij,d}(0)$, $N_{ij,d}(0)$, and $C_{ij,d}(0)$. Thus $\eta$ is well-defined.
Also, Lemma \ref{generatorsGNR} implies that the map is surjective.  It remains
to prove that $\eta$ is injective.

We claim that,  modulo the relations in $I_0(n,r)$, any path 
$p$ in $\Sigma(n,r)$ is either equal to $k_{\underline d}$ or a path of the form
$$E(i_s,j_s,\underline d_s) \cdots E(i_1,j_1,\underline d_1)
F(i'_1,j'_1,\underline d'_1)\cdots F(i'_t,j'_t,\underline d'_t),$$
satisfying the conditions in Lemma \ref{monbasis}. Note that
such a path is mapped onto one of monomial basis elements of Lemma \ref{monbasis},
and so $\eta$ is injective. 

We prove the claim by induction on the length of $p$. 
If $p$ has length less than or equal to one, it is equal to $k_{\underline d}$
or one of the arrows $F_{i,\underline d}$ and $E_{i,\underline d}$, and so the
claim follows.
Assume that $p$ has length greater than one. Then we have 
$$p=p'F_{i,\underline c} \mbox{ or } p=p'E_{i,\underline c}$$ where $p'$ is a non-trivial
path of smaller length, and so by induction has the required form $$p'=EF=E(i_s,j_s,\underline d_s) \cdots E(i_1,j_1,\underline d_1)
 F(i'_1,j'_1,\underline d'_1)\cdots F(i'_t,j'_t,\underline d'_t),$$ where $E$ and $F$ are
products of $E_{j,\underline d}$ and $F_{j,\underline d}$, respectively.

We first consider $p=p'E_{i,\underline c}$. If $p'$ contains no $F_{j,\underline d}$,
then the claim follows using the relations $P_{ab,\underline d}(0)$. Otherwise,
by the relations
$C_{ab,\underline d}(0)$, either 
$p=EF'$ with the length of $F'$ smaller than that of $F$ or 
$p=E E_{i,\underline d_1-\alpha_i+\alpha_{i+1}}F'$
with each factor $F(i'_l,j'_l,\underline d'_l)$ in $F$ 
replaced with a factor $F(i'_l,j'_l,\underline c'_l)$.
In the first case, the claim follows by induction.  Otherwise, by 
the relations $P_{ab,\underline d}(0)$, there are two possibilities. First, there exists a minimal $m$ with $j_m=i-1$. Then 
$EE_{i,\underline d_1-\alpha_i+\alpha_{i+1}}F'$ is equal to $$E(i_s,j_s,d_s)\cdots E(i_{m-1},j_{m-1},\underline d_{m-1})E(i_m,j_m+1,\underline c_{m})E(i_{m+1},j_{m+1},\underline c_{m+1})
\cdots E(i_1,j_1,\underline c_1)F'.$$
We have $$c_1=d_1-\alpha_i+\alpha_{i+1}\geq \sum_{l}\alpha_{j_l+1} -\alpha_{i} + \alpha_{i+1} + \sum_l\alpha_{j'_l+1}=\sum_{l\neq m}\alpha_{j_l+1} + \alpha_{j_m+1} + \sum_l\alpha_{j'_l+1}.$$ Moreover, 
again using the relations $P_{ab,\underline d}(0)$, the factors
can be reordered  (up to change of $d_l,c_m$) to obtain a path of the required form. 

Second, there is no such $m$ with $j_m=i-1$. Then 
$$EE_{i,\underline d_1-\alpha_i+\alpha_{i+1}}F'=E(i_s,j_s,\underline d_s)\cdots E(i_{m},j_{m},\underline d_m)E(i,i,\underline c_m)E(i_{m-1},j_{m-1},
\underline c_{m-1})
\cdots E(i_1,j_1,\underline c_1)F',$$ with $j_{m-1}\leq i$ and $j_{m}>i$. 
In order to show that this path is of the required form, we need only to prove the 
inequality 
$$c_1=d_1-\alpha_i+\alpha_{i+1}\geq \sum_l\alpha_{j_l+1}+\alpha_{i+1}+
\sum_l\alpha_{j'_l+1}.$$ 
Clearly, the inequality holds for each component different
from $i$.  Since there are no $m$ with $j_m=i-1$, the sum $\sum_l\alpha_{j_l+1}$ 
contain no $\alpha_i$. Since $FE_{i,c}=E_{i,c_1}F'$ with the length of $F'$ equal 
to that of $F$, we must have $(d_1-\alpha_i)_i \geq (\sum_l\alpha_{j'_l+1})_i$ 
and so the inequality follows.

Finally, we consider $p=p'F_{i,\underline c}$, where $p'$ is a path
of the required form $p'=EF$ as above. If there are no factor 
$E_{j,\underline d}$ in $p'$, then
the claim follows from the relations $N_{ab,\underline d}(0)$. 
Otherwise $p=E'E_{j,\underline d_1}F$, which following $C_{ab,\underline d}(0)$
is either $p=E'F'$ with $F'$ shorter than $F$, or $p=E'F'E_{j,\underline c}$
which is then the case showed above. So the claim holds.
\end{proof}

\subsection{A geometric realisation of $S_0(n,r)$}

We now prove the main result of this section.

\begin{theorem} \label{geomreal}
The map $$\psi:G(n,r)\rightarrow S_0(n,r)$$ defined by $\psi(e_{i,\underline d})
=e_{i,\underline d}$, $\psi(f_{i,\underline d})=f_{i,\underline d}$ and
$\psi(k_{\underline d})=k_{\underline d}$ is an isomorphism of
$\mathbb{Z}$-algebras.
\end{theorem}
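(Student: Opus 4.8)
The plan is to build the isomorphism $\psi$ by composing the presentation of $G(n,r)$ established in Theorem \ref{presgnr} with a presentation of $S_0(n,r)$, and then match them on generators. Concretely, I would first show that the same generators $e_{i,\underline d}, f_{i,\underline d}, k_{\underline d}$, now viewed as elements of $S_0(n,r)=S_q(n,r)\otimes_{\mathbb{Z}[q]}\mathbb{Z}[q]/(q)$, satisfy the relations $P_{ij,\underline d}(0)$, $N_{ij,\underline d}(0)$ and $C_{ij,\underline d}(0)$ generating $I_0(n,r)$. This is exactly the $q=0$ specialisation of Lemma \ref{startlemma}: since the relations $P_{ij}, N_{ij}, C_{ij}$ hold in $S_q(n,r)$ over $\mathbb{Z}[q]$, reducing modulo $(q)$ shows their images $P_{ij}(0), N_{ij}(0), C_{ij}(0)$ hold in $S_0(n,r)$. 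Hence there is a well-defined $\mathbb{Z}$-algebra homomorphism $\bar\phi:\mathbb{Z}\Sigma(n,r)/I_0(n,r)\rightarrow S_0(n,r)$ sending each generator to the like-named element. Composing with the inverse of the isomorphism $\eta$ from Theorem \ref{presgnr} then produces the map $\psi=\bar\phi\circ\eta^{-1}$, which is exactly the identity on the three families of generators as required.

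It remains to prove $\psi$ (equivalently $\bar\phi$) is bijective. For surjectivity I would invoke the fact that $e_{i,\underline d}, f_{i,\underline d}, k_{\underline d}$ generate $S_0(n,r)$; this follows from Proposition \ref{surjQ} specialised at $q=0$, or more directly because these generators generate $S_q(n,r)\otimes_{\mathbb{Z}[q]}\mathcal{Q}$ and the quotient map $\mathcal{Q}\to\mathbb{Z}[q]/(q)$ sends this to $S_0(n,r)$. For injectivity the cleanest route is a dimension/rank count: both $G(n,r)$ and $S_0(n,r)$ are free $\mathbb{Z}$-modules with basis indexed by $\mathcal{F}\times\mathcal{F}/\gl(V)$, hence have the same rank. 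A surjective homomorphism of free $\mathbb{Z}$-modules of equal finite rank is an isomorphism, so $\psi$ is injective once surjectivity is known. Thus the heart of the argument reduces to confirming that $\bar\phi$ is well defined and surjective, with injectivity falling out from rank-counting.

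I expect the main obstacle to lie in the well-definedness of $\bar\phi$, specifically in checking that the relation $C_{ij}(0)$ holds in $S_0(n,r)$. The subtlety is that $C_{ij}$ over $\mathbb{Z}[q]$ involves the coefficient $\tfrac{q^{d_i}-q^{d_{i+1}}}{q-1}$, and its $q=0$ specialisation is the piecewise function $\lambda_{ij}(\underline d)$ taking values in $\{-1,0,1\}$ according to whether $d_i>d_{i+1}=0$, $d_{i+1}>d_i=0$, or otherwise. I would verify carefully that evaluating $\tfrac{q^{d_i}-q^{d_{i+1}}}{q-1}=q^{d_i-1}+\cdots+q^{d_{i+1}}$ (when $d_i>d_{i+1}$) at $q=0$ indeed yields $1$ exactly when $d_{i+1}=0$ and $0$ otherwise, and symmetrically for the $d_{i+1}>d_i$ case, matching $\lambda_{ij}(\underline d)$. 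Once this specialisation is confirmed, the commutator relation $C_{ij,\underline d}(0)=e_iF_j-f_iE_j-\delta_{ij}\lambda_{ij}(\underline d)k_{\underline d}$ follows from Lemma \ref{Lemma3.2} reduced modulo $(q)$, and the argument closes.
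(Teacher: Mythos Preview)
Your proposal is correct and follows essentially the same path as the paper's proof: both obtain a surjection $\mathbb{Z}\Sigma(n,r)/I_0(n,r)\to S_0(n,r)$ (the paper by tensoring the surjection of Proposition~\ref{surjQ} with $\mathcal{Q}/q\mathcal{Q}$ and invoking Lemma~\ref{changerings}, you by reducing Lemma~\ref{startlemma} modulo $q$ for well-definedness and the same proposition for surjectivity), then compose with the isomorphism $\eta$ of Theorem~\ref{presgnr} and conclude by equality of $\mathbb{Z}$-ranks. Your anticipated ``main obstacle''---the specialisation of the coefficient in $C_{ij}$---is not a genuine difficulty, since the relation already holds over $\mathbb{Z}[q]$ and passes to the quotient $S_0(n,r)$ automatically; the paper spends no time on it.
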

\begin{proof}
From Proposition \ref{surjQ},
we have the surjective $\mathcal{Q}$-algebra homomorphism
$$\mathcal{Q}\Sigma(n,r)/\mathcal{Q}I(n,r))
\rightarrow S_q(n,r)\otimes_{\mathbb{Z}[q]}\mathcal{Q},$$
which, since $\mathcal{Q}/q\mathcal{Q}\simeq \mathbb{Z}$, 
induces a surjective $\mathbb{Z}$-algebra homomorphism
$$\mathcal{Q}\Sigma(n,r)/\mathcal{Q}I(n,r))\otimes_{\mathcal{Q}}
\mathcal{Q}/q\mathcal{Q}\rightarrow 
S_q(n,r)\otimes_{\mathbb{Z}[q]}\mathcal{Q}\otimes_{\mathcal{Q}}
\mathcal{Q}/q\mathcal{Q}.$$

Following the definition of $S_0(n,r)$ and
the isomorphisms $\mathcal{Q}/q\mathcal{Q}
\simeq \mathbb{Z}[q]/q\mathbb{Z}[q]\simeq \mathbb{Z}$, we have
$$S_q(n,r)\otimes_{\mathbb{Z}[q]}\mathcal{Q}\otimes_{\mathcal{Q}} \mathcal{Q}/q\mathcal{Q}=S_0(n,r)$$  
and by Lemma \ref{changerings}
$$(\mathcal{Q}\Sigma(n,r)/\mathcal{Q}I(n,r))
\otimes_{\mathcal{Q}}\mathcal{Q}/q\mathcal{Q}
\simeq (\mathbb{Z}\Sigma(n,r)/I_0(n,r)).$$ 
So there is a surjective $\mathbb{Z}$-algebra homomorphism 
$\mathbb{Z}\Sigma(n,r)/I_0(n,r)) \rightarrow S_0(n,r)$ given by
$E_{i,\underline d}\mapsto e_{i,\underline d}$, 
$F_{i,\underline d}\mapsto f_{i,\underline d}$, 
$K_{i,\underline d}\mapsto k_{i,\underline d}$. The theorem now follows
from Theorem \ref{presgnr}, since $G(n,r)=S_0(n,r)$ as $\mathbb{Z}$-modules.
\end{proof}

\begin{corollary} \label{multbasisG}
The set $\psi(\mathcal{F}\times \mathcal{F}/\gl(V))$ is a
multiplicative basis for $S_0(n,r)$.
\end{corollary}

\subsection{Proof of Theorem \ref{laterproof}}

By Proposition \ref{surjQ}, 
the map $\phi\otimes \mathcal{Q}$ induces a short exact sequence 
$$0 \rightarrow K \rightarrow \mathcal{Q}\Sigma(n,r)/\mathcal{Q}I(n,r)\rightarrow S_q(n,r)\otimes_{\mathbb{Z}[q]}\mathcal{Q}\rightarrow 0.$$ Since $S_q(n,r)
\otimes_{\mathbb{Z}[q]}\mathcal{Q}$ is
a free $\mathcal{Q}$-module, applying $-\otimes_{\mathcal{Q}}\mathcal{Q}/q\mathcal{Q}$ gives the exact
sequence $$0 \rightarrow K\otimes_{\mathcal{Q}}\mathcal{Q}/q\mathcal{Q} \rightarrow \mathcal{Q}\Sigma(n,r)/\mathcal{Q}I(n,r)\otimes_{\mathcal{Q}}\mathcal{Q}/q\mathcal{Q}\rightarrow S_q(n,r)\otimes_{\mathbb{Z}[q]}\mathcal{Q}\otimes_{\mathcal{Q}}\mathcal{Q}/q\mathcal{Q}\rightarrow 0.$$

As in the proof of Theorem \ref{geomreal}, we have isomorphisms
$S_q(n,r)\otimes_{\mathbb{Z}[q]}\mathcal{Q}\otimes_{\mathcal{Q}} \mathcal{Q}/q\mathcal{Q}=S_0(n,r)$  and
$(\mathcal{Q}\Sigma(n,r)/\mathcal{Q}I(n,r))\otimes_{\mathcal{Q}}\mathcal{Q}/q\mathcal{Q}
\simeq (\mathbb{Z}\Sigma(n,r)/I_0(n,r)).$
Furthermore, via these two isomorphisms the map $\phi \otimes \mathcal{Q} \otimes \mathcal{Q}/q\mathcal{Q}$ is the composition of the isomorphism $\mathbb{Z}\Sigma(n,r)/I_0(n,r)\simeq G(n,r)$ in
Theorem \ref{presgnr} and the isomorphism  $G(n,r)\simeq S_0(n,r)$ in 
Theorem \ref{geomreal}. 
Therefore $K\otimes_{\mathcal{Q}}\mathcal{Q}/q\mathcal{Q}=K/qK=0$. Now
by Nakayama's lemma there is an element $r=1+qf(q)\in \mathcal{Q}$ such that
$rK=0$. Since $r$ is invertible in $\mathcal{Q}$, we have $K=0$. Thus $\phi\otimes \mathcal{Q}$ is an isomorphism.

\section{The degeneration order on pairs of flags}

In this section let $k$ be algebraically closed. We describe the
degeneration order on $\gl(V)$-orbits in $\mathcal{F}\times
\mathcal{F}$ using quivers and the symmetric group $S_r$.

Let $\Gamma=\Gamma(n)$ be the quiver of type $\mathbb{A}_{2n-1}$,
$$\Gamma: \xymatrix{1_L \ar[r] & 2_L \ar[r] & \cdots \ar[r] & n & \ar[l]
\cdots & \ar[l] 2_R & 1_R \ar[l]}$$  constructed by joining two linear
quivers $\Lambda_L=\Lambda_L(n)$ and $\Lambda_R=\Lambda_R(n)$
at the vertex $n$. Often it will be clear from the context which side of
$\Gamma$ we are considering, and then we drop the subscripts on the
vertices.

A pair $(f,f')\in \mathcal{F}\times \mathcal{F}$ is a representation of
$\Gamma$, where $f$ is supported on $\Lambda_L$, $f'$ is supported on
$\Lambda_R$. Conversely, any representation $M$ of $\Gamma$ with
$\mathrm{dim} M_n=r$, which is projective when restricted to both
$\Lambda_L$ and $\Lambda_R$ determines uniquely an orbit of pair of
flags $[f, f']\in \mathcal{F}\times \mathcal{F}/\gl(V)$. Moreover,
two pairs of flags are isomorphic if and only if the corresponding
representations are isomorphic.

For integers $i,j\in \{1,\cdots,n\}$, let $N_{ij}$ be the indecomposable
representation of $\Gamma$ which is equal to the indecomposable
projective representations $M_{in}$ and $M_{jn}$ when restricted to
$\Lambda_L$ and $\Lambda_R$, respectively. A representation $N$ of
$\Gamma$ which is projective when restricted to $\Lambda_L$ and
$\Lambda_R$, and $\dim N_n=r$, decomposes up to isomorphism as
$$\label{eq1}N \simeq \bigoplus^r_{l=1} N_{i_lj_l}.$$ In this section
we always assume that $j_1\leq j_2 \leq \cdots \leq j_r$. The variety
of representations that has such a decomposition is an open subset of
the variety of representations of $\Gamma$ with dimension vector 
the same as $N$. We shall view representations
in this subvariety as pairs of flags in $V$.

Let $\leq_{deg}$ denote the degeneration order on isomorphism classes
of representations of $\Gamma$. That is, $M\leq_{deg}N$ for two
representations $M$ and $N$, if $N$ is contained in the
closure of the orbit of $M$ in the space of all representations.
The degeneration order on pairs of flags is also denoted by
$\leq_{deg}$, since there is a degeneration between two pair of flags
if and only if there is a degeneration between the corresponding
representations of $\Gamma$. 

Since $\Gamma$ is a Dynkin quiver,
by a result of Bongartz \cite{bongartz}, the degeneration $\leq_{deg}$
is the same as the degeneration $\leq_{ext}$ given by a sequence of 
extensions. That is, if there  is an extension
$$\xymatrix{0 \ar[r] & N' \ar[r] & M \ar[r] & N'' \ar[r] & 0 },$$
then $M\leq_{ext}N'\oplus N''$, and more generally $\leq_{ext}$ is the
transitive closure.

The symmetric group $S_r$ of permutations of the set $\{1,\cdots,r\}$ acts
on representations with a decomposition $N=\bigoplus^r_{l=1} N_{i_lj_l}$
by $$\sigma N=\bigoplus^r_{l=1}N_{i_{\sigma l}j_l}$$ for $\sigma\in S_r$.

The following facts are the key lemmas on degenerations in
$\mathcal{F}\times \mathcal{F}$. For the sake of completeness we include
a brief sketch of the proofs.

\begin{lemma} \label{deglemma}
Let $N=\bigoplus^r_{l=1}N_{i_lj_l}$ be a decomposition as above, and let $(t,s)$
with $t<s$ be a transposition. Then  $N<_{deg}(t,s)N$ if and only if
$i_t>i_s$.
\end{lemma}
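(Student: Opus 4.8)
The plan is to work entirely with representations of the quiver $\Gamma$ and to use the equivalence $\leq_{deg}\,=\,\leq_{ext}$ coming from Bongartz's theorem, so that degenerations can be detected by the existence of short exact sequences between the indecomposable summands $N_{ij}$. The two decompositions $N=\bigoplus_l N_{i_lj_l}$ and $(t,s)N$ differ only in the two summands $N_{i_tj_t}\oplus N_{i_sj_s}$ versus $N_{i_sj_t}\oplus N_{i_tj_s}$ (recall $S_r$ permutes only the left-hand indices $i_l$, keeping the $j_l$ fixed), with $j_t\leq j_s$ since $t<s$ and the $j$'s are non-decreasing. Hence the whole problem reduces to a rank-two computation: I would first show that everything away from these two summands is a common direct summand and can be cancelled, and then analyse degenerations of the two-summand representation $N_{i_tj_t}\oplus N_{i_sj_s}$.

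First I would compute $\hom$ and $\ext^1$ between the indecomposables $N_{ab}$ and $N_{cd}$ of $\Gamma$. Since each $N_{ab}$ is, on the left arm $\Lambda_L$, the projective $M_{an}$ supported on $[a,n]$ and, on the right arm $\Lambda_R$, the projective $M_{bn}$ supported on $[b,n]$, with both meeting at the common vertex $n$ where the space is one-dimensional, a homomorphism $N_{ab}\to N_{cd}$ is determined by a scalar that must be compatible on both arms; this forces a nonzero map to exist precisely when $a\geq c$ and $b\geq d$ (the support containments on each arm). Computing $\ext^1$ then, either directly from the Euler form of the Dynkin quiver $\Gamma$ or via the Auslander--Reiten translate, I expect to find $\ext^1(N_{i_sj_s},N_{i_tj_t})\neq 0$ exactly when $i_t>i_s$ (using $j_t\leq j_s$). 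This is the technical heart of the argument and the step I expect to be the main obstacle: I must pin down the direction and the precise index inequality for nonvanishing of $\ext^1$, being careful about which summand sits as sub and which as quotient, since the asymmetry between the left and right arms is what produces the one-sided condition $i_t>i_s$ rather than a symmetric one.

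Once the Ext-computation is in hand, the forward direction is immediate: if $i_t>i_s$, a nonsplit extension
$$\xymatrix{0 \ar[r] & N_{i_tj_t} \ar[r] & E \ar[r] & N_{i_sj_s} \ar[r] & 0}$$
has middle term $E\simeq N_{i_sj_t}\oplus N_{i_tj_s}$, because the middle term must be projective on both arms with the correct left-arm data $\{i_t,i_s\}$ and right-arm data $\{j_t,j_s\}$, and the only non-trivial such gluing is the one pairing the smaller left index with the smaller right index. Adding back the common summands gives $N<_{ext}(t,s)N$, hence $N<_{deg}(t,s)N$. For the converse I would argue contrapositively: if $i_t\leq i_s$ then, combined with $j_t\leq j_s$, the pair $(i_t,j_t),(i_s,j_s)$ is ``nested'' so that $\ext^1$ between the two relevant summands vanishes in both directions, the two representations $N$ and $(t,s)N$ are then incomparable (indeed the orbit dimensions, computed via the Euler form, coincide or the wrong inequality holds), and no sequence of extensions can pass from one to the other. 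Thus $N<_{deg}(t,s)N$ forces $i_t>i_s$, completing the equivalence.

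Throughout I would lean on the stated reduction that pairs of flags correspond to $\Gamma$-representations projective on each arm, on Bongartz's equality $\leq_{deg}\,=\,\leq_{ext}$ quoted just before the lemma, and on the elementary fact that for two indecomposables over a representation-finite hereditary algebra a minimal degeneration is realised by a single short exact sequence. The only genuinely new input is the rank-two Ext and middle-term calculation, and if the index bookkeeping there is handled cleanly the rest of the proof is formal.
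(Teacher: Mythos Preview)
Your reduction to the two summands at positions $t,s$ and the appeal to $\leq_{deg}=\leq_{ext}$ is exactly the paper's approach, but the short exact sequence you write down is backwards, and this is a genuine error rather than a slip of notation.

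Recall the convention quoted just before the lemma: from $0\to A\to M\to B\to 0$ one obtains $M\leq_{ext}A\oplus B$, so the \emph{middle} term is the more generic one. Since you want $N<_{deg}(t,s)N$, the summands $N_{i_tj_t}\oplus N_{i_sj_s}$ of $N$ must sit in the middle and the summands $N_{i_sj_t},\,N_{i_tj_s}$ of $(t,s)N$ must be the outer terms. Your sequence $0\to N_{i_tj_t}\to E\to N_{i_sj_s}\to 0$ has these roles reversed; worse, a direct check (for instance from the projective resolution $0\to N_{n,n}\to N_{i_s,n}\oplus N_{n,j_s}\to N_{i_sj_s}\to 0$ in $\Gamma$) shows that $\ext^1(N_{i_sj_s},N_{i_tj_t})=0$ when $i_t>i_s$ and $j_t\le j_s$, so the nonsplit extension you posit does not exist. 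The correct sequence, and the one the paper writes down, is
\[
0\longrightarrow N_{i_tj_s}\longrightarrow N_{i_tj_t}\oplus N_{i_sj_s}\longrightarrow N_{i_sj_t}\longrightarrow 0,
\]
with the left map the diagonal of the two obvious inclusions; the nonvanishing extension group is $\ext^1(N_{i_sj_t},N_{i_tj_s})$, between the summands of $(t,s)N$, not of $N$. Once you interchange the roles of middle and ends your forward argument goes through.

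For the converse the paper simply compares stabiliser (equivalently orbit) dimensions, which is precisely what your parenthetical remark about orbit dimensions via the Euler form would give; the separate claim that ``$\ext^1$ between the two relevant summands vanishes in both directions'' again concerns the wrong pair of indecomposables and in any case does not by itself preclude a strict degeneration.
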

\begin{proof}
Assume that $i_t>i_s$.
There is a short exact sequence $$\xymatrix{0 \ar[r] & N_{i_tj_s} \ar[r] & 
N_{i_tj_t} \oplus N_{i_sj_s} \ar[r] & N_{i_sj_t} \ar[r] & 0 }.$$ Since
every extension degenerates to the trivial extension we have
$$N_{i_tj_t} \oplus N_{i_sj_s}\leq_{deg} N_{i_tj_s}\oplus N_{i_sj_t}$$
and therefore $N<_{deg}(t,s)N$.

Conversely, assume that $i_t\leq i_s$. By comparing the dimensions of the
stabilisers of $N$ and $(t,s)N$ we see that $N\not<_{deg}(t,s)N$.
\end{proof}

We say that a degeneration $M\leq_{deg}N$ is minimal if $M\not\simeq N$ and 
$M\leq_{deg} X \leq_{deg} N$ implies $X\simeq M$ or $X\simeq N$.

\begin{lemma} \label{deglemma2}
Let $N=\bigoplus^r_{l=1}N_{i_lj_l}$ and $M\leq_{deg}N$ be minimal. Then there exists
a transposition $(t,s)$ such that $M\simeq (t,s)N$. 
\end{lemma}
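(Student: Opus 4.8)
The statement asserts that every minimal degeneration between these particular representations of $\Gamma$ is realised by a single transposition of the top indices. The plan is to combine the Bongartz result (already invoked in this section) identifying $\leq_{deg}$ with $\leq_{ext}$ with the dimension/stabiliser bookkeeping from Lemma \ref{deglemma}. First I would observe that since $\leq_{deg}=\leq_{ext}$ is generated by single extensions, any degeneration $M\leq_{deg}N$ with $M\not\simeq N$ is obtained from a chain of extensions, and for a \emph{minimal} degeneration it suffices to analyse what a single nontrivial extension step can do to a representation of the form $N=\bigoplus_{l=1}^r N_{i_lj_l}$. The key structural fact I would exploit is that the indecomposables $N_{ij}$ of $\Gamma$ are exactly the ones appearing in these pairs-of-flags representations, so any short exact sequence $0\to N'\to E\to N''\to 0$ with all terms in this class forces $N'$ and $N''$ to be (sums of) such $N_{ij}$, and the possible nonsplit extensions between two indecomposables $N_{i_tj_t}$ and $N_{i_sj_s}$ are severely constrained.

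\textbf{Main steps.} The heart of the argument is to classify the nonsplit extensions among the $N_{ij}$. I would show that for indecomposables $N_{a b}$ and $N_{c d}$ a nonsplit extension exists (in the relevant direction) precisely when the indices interleave in the way that produces, after degenerating to the split extension, a swap of top indices; concretely the only middle terms that can occur are $N_{a d}\oplus N_{c b}$ as in the exact sequence displayed in the proof of Lemma \ref{deglemma}. Thus a single extension step applied to two summands $N_{i_tj_t}\oplus N_{i_sj_s}$ of $N$ yields $N_{i_tj_s}\oplus N_{i_sj_t}$, i.e. exactly $(t,s)N$ for a transposition $(t,s)$, with the remaining summands untouched. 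Because a minimal degeneration cannot be factored through an intermediate nonisomorphic representation, it must consist of a single such step: if it required two or more nontrivial extension steps one could insert the intermediate representation and contradict minimality, and if a single step acted on more than two summands simultaneously one could typically refine it into a shorter chain, again contradicting minimality. Finally, by Lemma \ref{deglemma} the direction of the degeneration ($N<_{deg}(t,s)N$ rather than the reverse) pins down that the transposition is the one with $i_t>i_s$, so $M\simeq(t,s)N$ as claimed.

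\textbf{Expected obstacle.} The delicate point, and the place I expect the real work to lie, is justifying that a minimal degeneration cannot involve an extension whose outer terms are \emph{decomposable}, or a single extension step that simultaneously rearranges three or more indices in a way not expressible as one transposition. In principle a nonsplit extension $0\to N'\to E\to N''\to 0$ could have $N'$ or $N''$ a direct sum, producing a coarser degeneration; I would need to argue that any such extension degenerates through an intermediate point and hence is not minimal, so that minimality forces the outer terms to be indecomposable and the step to be a single index swap. Controlling this reduction is where the Dynkin/combinatorial structure of $\Gamma$ and the explicit form of the $N_{ij}$ must be used carefully; once the extensions are classified as above, the matching with transpositions and the appeal to Lemma \ref{deglemma} for the direction are routine.
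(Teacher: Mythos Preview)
Your overall strategy matches the paper's: use Bongartz's $\leq_{deg}=\leq_{ext}$ to get, for a minimal degeneration, a single nonsplit extension $0\to N'\to M\to N''\to 0$ with $N\simeq N'\oplus N''$, then reduce to the case of indecomposable outer terms and invoke Lemma~\ref{deglemma}. You have also correctly located the one genuine difficulty, namely that $N'$ and $N''$ may be decomposable.

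What you are missing is the concrete mechanism that dissolves this obstacle. The paper does not classify all extensions among the $N_{ij}$; instead it picks indecomposable summands $N_{i_sj_s}\subseteq N'$ and $N_{i_tj_t}$ of $N''$ so that the pushout along $N'\twoheadrightarrow N_{i_sj_s}$ followed by the pullback along $N_{i_tj_t}\hookrightarrow N''$ is still nonsplit (such a pair exists precisely because the original sequence is nonsplit). Call the resulting middle term $M'$. The standard fact about pushouts/pullbacks gives $M\leq_{deg} M'\oplus(N'/N_{i_sj_s})\oplus(N''/N_{i_tj_t})$, while $M'<_{deg}N_{i_sj_s}\oplus N_{i_tj_t}$ forces this intermediate representation to lie strictly below $N$. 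Minimality then yields $M\simeq M'\oplus(\text{rest})$, and since the nonsplit extension of two indecomposables $N_{i_sj_s}$, $N_{i_tj_t}$ is exactly the one displayed in Lemma~\ref{deglemma}, this is $(t,s)N$. Your phrase ``one could typically refine it'' is where the pushout/pullback construction does the actual work; without it the reduction is only heuristic.
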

\begin{proof}
Since $M\leq_{deg}N$ is minimal, there is a non-split extension 
$$\xymatrix{0 \ar[r] & N' \ar[r] & M \ar[r] & N'' \ar[r] & 0, }$$
where $N\simeq N'\oplus N''$. We may choose summands $N_{i_sj_s}$ and $N_{i_tj_t}$ 
of  $N'$ and $N''$, respectively, such that taking pushout along the projection $N'\ra N_{i_sj_s}$
and then pullback along the inclusion $N_{i_tj_t}\ra N''$ gives us a non-split extension
$$\xymatrix{0 \ar[r] & N_{i_sj_s} \ar[r] & M' \ar[r] & N_{i_tj_t} \ar[r] & 0. }$$
This extension is of the form of the extension in the proof of Lemma \ref{deglemma}.
Hence $$M'<_{deg} N_{i_sj_s} \oplus N_{i_tj_t},$$ and so 
$$M'\oplus (N'/N_{i_sj_s}) \oplus (N''/N_{i_tj_t})<_{deg} (N'/N_{i_sj_s}) \oplus (N''/N_{i_tj_t})\oplus N_{i_sj_s} \oplus N_{i_tj_t}\simeq N.$$
By the construction of $M'$, $$M\leq_{deg} M'\oplus (N'/N_{i_sj_s}) \oplus (N''/N_{i_tj_t}),$$
so by the minimality of the degeneration, $$M\simeq M'\oplus (N'/N_{i_sj_s}) \oplus (N''/N_{i_tj_t}),$$
and so the lemma follows.
\end{proof}

There is a unique closed orbit in
$\mathcal{F}_{\underline d}\times \mathcal{F}_{\underline e}$. We
describe a corresponding representation.

\begin{lemma} \label{closedorbit}
The orbit of a pair of flags corresponding to a representation $N$ is
closed, if and only if $N\simeq \bigoplus^r_{l=1} N_{i_lj_l}$ with
$i_l \leq i_{l+1}$ for all $l=1,\cdots,r-1$.
\end{lemma}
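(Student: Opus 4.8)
The plan is to characterise the closed orbit in $\mathcal{F}_{\underline d}\times \mathcal{F}_{\underline e}$ via the degeneration order established in Lemmas \ref{deglemma} and \ref{deglemma2}, using the fact that an orbit is closed precisely when it is a maximal element in $\leq_{deg}$ (the closed orbit sits in the closure of every other orbit). Since $\Gamma$ is Dynkin and all the relevant representations decompose as $N\simeq\bigoplus_{l=1}^r N_{i_lj_l}$ with the convention $j_1\leq\cdots\leq j_r$ fixed throughout the section, the action of $S_r$ on the subscripts $i_l$ gives a transitive family of orbits on each fixed $\mathcal{F}_{\underline d}\times\mathcal{F}_{\underline e}$, and the degeneration order on this family is controlled entirely by the combinatorics of Lemma \ref{deglemma}.

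First I would observe that, because $j_1\leq\cdots\leq j_r$ is fixed, the only freedom in writing $N$ is the assignment of the left-hand indices $i_l$, and all representations in a given $\mathcal{F}_{\underline d}\times\mathcal{F}_{\underline e}$ are obtained from one another by permuting these $i_l$ via $S_r$. Next I would invoke Lemma \ref{deglemma}: for a transposition $(t,s)$ with $t<s$ one has $N<_{deg}(t,s)N$ exactly when $i_t>i_s$. Thus the orbit of $N$ admits a proper degeneration (to a strictly larger element in $\leq_{deg}$) precisely when there is some pair $t<s$ with $i_t>i_s$ but $j_t\leq j_s$; since the $j_l$ are already sorted nondecreasingly, such a pair exists whenever the sequence $(i_l)$ fails to be nondecreasing. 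Conversely, if $i_l\leq i_{l+1}$ for all $l$, then no transposition $(t,s)$ with $t<s$ satisfies $i_t>i_s$ among adjacent positions, and one must argue that no transposition at all produces a degeneration upward.

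The key step is therefore to show the equivalence: the orbit of $N$ is closed (maximal in $\leq_{deg}$) if and only if $(i_l)$ is nondecreasing. For the forward direction I would argue the contrapositive: if $(i_l)$ is not nondecreasing, pick an adjacent descent $i_t>i_{t+1}$, apply Lemma \ref{deglemma} with the transposition $(t,t+1)$ to obtain a strict degeneration $N<_{deg}(t,t+1)N$, so $N$ is not closed. For the reverse direction, suppose $i_l\leq i_{l+1}$ for all $l$; by Lemma \ref{deglemma2} any minimal degeneration $M\leq_{deg}N'$ with $N'\simeq N$ would arise from some transposition $(t,s)N$ with $t<s$ and $i_t>i_s$, which is impossible under the sorted hypothesis (a nondecreasing sequence has no inversion $i_t>i_s$ with $t<s$), so $N$ is maximal and hence its orbit is closed.

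The main obstacle will be the reverse direction, specifically confirming that no \emph{non-adjacent} transposition can produce an upward degeneration when $(i_l)$ is sorted, and that maximality in $\leq_{deg}$ really does force the orbit to be closed. The first point follows because a nondecreasing sequence has no inversions at all, so the hypothesis $i_t>i_s$ of Lemma \ref{deglemma} never holds for any $t<s$; combined with Lemma \ref{deglemma2}, which reduces every minimal degeneration to a single transposition, this rules out all degenerations. The second point rests on the general fact (for an algebraic group acting on a variety with finitely many orbits) that each orbit closure contains a unique closed orbit, which is the unique minimal one in the orbit-dimension sense and maximal in $\leq_{deg}$; together with the uniqueness of the closed orbit in each $\mathcal{F}_{\underline d}\times\mathcal{F}_{\underline e}$ asserted just before the lemma, this pins down the closed orbit as exactly the one with $(i_l)$ nondecreasing.
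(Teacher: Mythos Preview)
Your proposal is correct and follows essentially the same route as the paper: both directions are handled via Lemma~\ref{deglemma}, using that a nondecreasing sequence $(i_l)$ has no inversions so no transposition produces a further degeneration, while any failure of monotonicity yields an adjacent descent and hence a strict degeneration. Your explicit appeal to Lemma~\ref{deglemma2} to rule out non-transposition degenerations is a small refinement over the paper's terser citation of Lemma~\ref{deglemma} alone; note that in that step the roles should read $N\leq_{deg} N''$ minimal (so $N\simeq (t,s)N''$, hence $N''\simeq (t,s)N$), not $M\leq_{deg} N'$ with $N'\simeq N$ as you wrote.
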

\begin{proof}
A representation $N=\bigoplus^r_{l=1} N_{i_lj_l}$ with
$ i_l\leq i_{l+1}$ does not have
degenerations according to Lemma \ref{deglemma}. Hence $N$,
and therefore also the corresponding pair of flags, has a closed orbit.

Conversely, if $i_l>i_{l+1}$, then $N$ has a degeneration again by 
Lemma \ref{deglemma}, and so the orbit of $N$ is not closed.
\end{proof}

Alternatively, we may prove the lemma by observing that among
all representations of the form $N=\bigoplus^r_{l=1} N_{i_lj_l}$
the representation with $i_l\leq i_{l+1}$
has a stabiliser of maximal dimension, and so this representation
has a closed orbit. The stabiliser in this case is a parabolic in $\gl(V)$.

There is a unique open orbit in $\mathcal{F}_{\underline d}\times
\mathcal{F}_{\underline e}$ with a corresponding representation
given as follows. The proof is similar to the proof of the previous
lemma.

\begin{lemma} \label{openorbit}
The orbit of a pair of flags corresponding to a representation $N$ is
open, if and only if $N\simeq \bigoplus^r_{l=1} N_{i_lj_l}$ with
$i_l\geq i_{l+1}$ for all $l=1,\cdots,r-1$.
\end{lemma}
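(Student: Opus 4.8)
The plan is to mirror the proof of Lemma \ref{closedorbit} but with the inequalities reversed, using the degeneration criterion of Lemma \ref{deglemma}. The orbit of $N$ is open precisely when $N$ admits no proper generisation, i.e. when there is no representation $M$ with $M<_{deg}N$ and $M\not\simeq N$; equivalently, $N$ is maximal in the degeneration order on its dimension-vector stratum.

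First I would prove the ``if'' direction. Suppose $N\simeq\bigoplus_{l=1}^r N_{i_lj_l}$ with $i_l\geq i_{l+1}$ for all $l$ (recall we always order so that $j_1\leq\cdots\leq j_r$). I claim $N$ has no proper generisation. By Lemma \ref{deglemma}, for any transposition $(t,s)$ with $t<s$ one has $(t,s)N<_{deg}N$ exactly when $i_t>i_s$, so the generisations of $N$ obtained by a single transposition are those swaps that strictly increase the disorder of the $i$-sequence; since our sequence is already non-increasing, every such swap with $t<s$ either leaves $N$ in the same orbit or satisfies $i_t\geq i_s$, giving $N\not<_{deg}(t,s)N$, hence no transposition produces $M$ with $M<_{deg}N$. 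To upgrade this from single transpositions to all degenerations, I invoke Lemma \ref{deglemma2}: any minimal degeneration $M<_{deg}N'$ is realised by a single transposition. If $N$ had a proper generisation $M<_{deg}N$, then refining along the degeneration order we could find a minimal degeneration $M'<_{deg}N$ with $N$ on the larger side, contradicting that no transposition generises $N$. Hence the orbit of $N$ has no points in its boundary strata above it, so it is open.

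Conversely, suppose some $i_l<i_{l+1}$. Then applying Lemma \ref{deglemma} to the transposition $(l,l+1)$ (noting $j_l\leq j_{l+1}$, so the ordering convention is respected) with $i_l<i_{l+1}$ gives $(l,l+1)N<_{deg}N$, exhibiting $N$ as a proper degeneration of $(l,l+1)N$. Thus $N$ lies in the closure of a strictly larger orbit, so the orbit of $N$ is not open. This establishes both directions.

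The main obstacle I anticipate is the passage from the single-transposition criterion to the statement that \emph{no} degeneration generises $N$, i.e. correctly wielding the minimality reduction of Lemma \ref{deglemma2} in the contrapositive direction. Lemma \ref{deglemma2} is stated for a minimal degeneration $M\leq_{deg}N'$ and produces a transposition; to conclude openness I need that maximality of $N$ with respect to single transpositions forces maximality in the full order, which requires that the degeneration order on this stratum is the transitive closure of the minimal degenerations (guaranteed here since $\Gamma$ is Dynkin, via Bongartz's result identifying $\leq_{deg}$ with $\leq_{ext}$). Alternatively, and perhaps more cleanly, I would follow the remark after Lemma \ref{closedorbit} and compare stabiliser dimensions: among all rearrangements $\sigma N$ the non-increasing arrangement $i_l\geq i_{l+1}$ has the \emph{smallest} stabiliser, hence the largest orbit, which must therefore be the open one. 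This dimension-count argument sidesteps the inductive transitivity issue entirely and is the shortcut the authors flag as ``similar to the proof of the previous lemma''.
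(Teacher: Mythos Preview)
Your strategy matches the paper's (the authors only say the proof is ``similar to the proof of the previous lemma'' and then note the stabiliser-dimension shortcut you describe at the end), and your converse direction is correct. However, the ``if'' paragraph is internally inconsistent because you have misread the direction in Lemma~\ref{deglemma}. Under the paper's convention $M\leq_{deg}N$ means $N\in\overline{O(M)}$, so openness of the orbit of $N$ is $\leq_{deg}$-\emph{minimality} (not maximality): one must rule out any $M$ with $M<_{deg}N$. Lemma~\ref{deglemma} says $N<_{deg}(t,s)N$ iff $i_t>i_s$; applying it with the roles of $N$ and $(t,s)N$ swapped gives $(t,s)N<_{deg}N$ iff $i_t<i_s$, the opposite of what you wrote. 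With the non-increasing hypothesis $i_t\geq i_s$ for all $t<s$, the correct criterion immediately yields $(t,s)N\not<_{deg}N$ for every transposition, after which Lemma~\ref{deglemma2} finishes the argument as you intended. (Your clause ``giving $N\not<_{deg}(t,s)N$'' is also false whenever $i_t>i_s$, but that relation is irrelevant here: it only says $N$ has proper degenerations, which an open orbit certainly does.) Note too that your converse paragraph already uses the correct direction, deriving $(l,l+1)N<_{deg}N$ from $i_l<i_{l+1}$, so the two paragraphs contradict one another as written.

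The stabiliser-dimension argument you sketch at the end is correct, sidesteps this bookkeeping entirely, and is exactly the route the paper's remark after Lemma~\ref{openorbit} points to.
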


Similar to the closed orbit, a representation of the form $N\simeq
\bigoplus^r_{l=1} N_{i_lj_l}$ with $i_l\geq i_{l+1}$
has a stabiliser of minimal dimension, and so the orbit is
open. The stabiliser in this case is the intersection of two opposite
parabolics in $\gl(V)$. Such stabilisers are called biparabolic
or seaweeds \cite{derg}. The stabiliser of an arbitrary pair of flags
is equal to the intersection of two parabolics in $\gl(V)$.

Let $o_{\underline d,\underline e}$ denote the unique open orbit and
$k_{\underline d,\underline e}$ the unique closed orbit in
$\mathcal{F}_{\underline d}\times \mathcal{F}_{\underline e}$. Then
$k_{\underline d}=k_{\underline d,\underline d}$ and we let
$o_{\underline d}=o_{\underline d,\underline d}$. Let $\tau\in S_r$,
then $\tau o_{\underline d,\underline e}\in \mathcal{F}\times
\mathcal{F}/\gl(V)$ denotes the orbit of pairs of flag corresponding
to the representation $\tau N$, where $N=\bigoplus^r_{l=1} N_{i_lj_l}$
with $i_{l+1}\leq i_l$, $\underline d=i_1+\cdots+
i_n$ and $\underline e=j_1+\cdots+j_n$. Similarly, we let $\tau
k_{\underline d,\underline e}$ be defined as the orbit corresponding to $\tau N$,
where $N=\bigoplus^r_{l=1} N_{i_lj_l}$ with $i_{l+1}\geq i_l$.

\section{Idempotents from open orbits} \label{open}

Let $M(n,r)$ be the submodule of $G(n,r)$ with basis the
open orbits in $\mathcal{F}\times \mathcal{F}$. In this section
we prove that $M(n,r)$ is a subalgebra $G(n,r)$ which is also a
direct factor. We also show that $M(n,r)$ is isomorphic to
the $\mathbb{Z}$-algebra of $|D(n,r)|\times |D(n,r)|$-matrices,
where $|D(n,r)|$ is the number of decompositions of $r$
into $n$ parts.

We start with two lemmas relating degeneration and multiplication
in $G(n,r)$. Let $\leq_{deg}$ be the degeneration
order on orbits in $(\mathcal{F}\times \mathcal{F})\times
(\mathcal{F}\times \mathcal{F})$ with the action
of $\gl(V)\times \gl(V)$.

\begin{lemma} \label{proddeglemma}
If $e_B\times e_{B'}\leq _{deg} e_A\times e_{A'}$, then
$e_B\star e_{B'}\leq _{deg} e_A\star e_{A'}$
\end{lemma}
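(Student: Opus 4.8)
The plan is to first reformulate the hypothesis. Since the closure of a product of orbits is the product of the closures, $e_B\times e_{B'}\leq_{deg}e_A\times e_{A'}$ is equivalent to the two separate degenerations $e_B\leq_{deg}e_A$ and $e_{B'}\leq_{deg}e_{A'}$ in $\mathcal{F}\times\mathcal{F}$. As $\leq_{deg}$ is a partial order (orbit-closure containment), it then suffices to change one factor at a time: I would prove that $e_A\subseteq\overline{e_B}$ forces $e_B\star e_C\leq_{deg}e_A\star e_C$ for every $e_C$ (the left case), prove the symmetric right case, and compose $e_B\star e_{B'}\leq_{deg}e_A\star e_{B'}\leq_{deg}e_A\star e_{A'}$ by transitivity. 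I should also note at the outset that $\leq_{deg}$ preserves dimension vectors, so $e_A,e_B$ share the types of both their flags, and likewise for $e_{A'},e_{B'}$; hence $S(A,C)$ is empty exactly when $S(B,C)$ is, and the case where both products vanish is trivial. So I may assume the products are nonzero.

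For the left case, recall that $e_A\star e_C$ is by definition the open, and hence by Lemma \ref{irred} and Corollary \ref{uniqueorbit} dense, orbit of $S(A,C)$, so $\overline{e_A\star e_C}=\overline{S(A,C)}$ and likewise $\overline{e_B\star e_C}=\overline{S(B,C)}$. Thus the goal $e_A\star e_C\subseteq\overline{e_B\star e_C}$ reduces to $S(A,C)\subseteq\overline{S(B,C)}$. Since $\mathcal{F}$ is a projective (complete) variety, the projection $\pi$ is a closed map, giving $\overline{S(B,C)}=\pi(\overline{\Delta^{-1}(e_B\times e_C)})$. Therefore it is enough to establish the containment $\Delta^{-1}(e_A\times e_C)\subseteq\overline{\Delta^{-1}(e_B\times e_C)}$ of subsets of $\mathcal{F}\times\mathcal{F}\times\mathcal{F}$; applying $\pi$ then yields $S(A,C)\subseteq\overline{S(B,C)}$.

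To prove this containment I would argue fibrewise in the first coordinate, keeping the other two fixed. Take a triple $(p_1,p_2,p_3)$ with $(p_1,p_2)\in e_A$ and $(p_2,p_3)\in e_C$, and set $F_B(p_2)=\{x\in\mathcal{F}\mid (x,p_2)\in e_B\}$, the fibre over $p_2$ of the second projection $\mathrm{pr}_2\colon e_B\to\mathcal{F}_{\underline e}$. The key geometric input is the slice identity $\overline{e_B}\cap(\mathcal{F}\times\{p_2\})=\overline{F_B(p_2)}\times\{p_2\}$; intersecting $e_A\subseteq\overline{e_B}$ with this slice gives $p_1\in\overline{F_B(p_2)}$. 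Since every $x\in F_B(p_2)$ gives a triple $(x,p_2,p_3)\in\Delta^{-1}(e_B\times e_C)$, the third condition $(p_2,p_3)\in e_C$ being untouched, the set $\overline{F_B(p_2)}\times\{(p_2,p_3)\}$ lies in $\overline{\Delta^{-1}(e_B\times e_C)}$, and in particular so does $(p_1,p_2,p_3)$. The symmetric right case is identical after fibring $e_{B'}$ over its first coordinate and perturbing the third entry of the triple.

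The main obstacle is the slice identity $\overline{e_B}\cap(\mathcal{F}\times\{p_2\})=\overline{F_B(p_2)}\times\{p_2\}$. The inclusion $\supseteq$ is immediate, but the reverse requires moving a point of the orbit closure onto the fixed fibre. I would obtain it from the homogeneous fibre bundle structure of $\mathrm{pr}_2\colon\overline{e_B}\to\mathcal{F}_{\underline e}$: this map is $\gl(V)$-equivariant onto the complete homogeneous space $\mathcal{F}_{\underline e}=\gl(V)/P$ with $P=\mathrm{Stab}(p_2)$, so it is a Zariski-locally trivial bundle whose fibre over $p_2$ is the closure in $\mathcal{F}$ of the corresponding fibre of $e_B$. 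Concretely, given $(x,p_2)=\lim(a_i,b_i)$ with $(a_i,b_i)\in e_B$ and $b_i\to p_2$, one lifts the curve $b_i$ through a local section of the smooth orbit map $\gl(V)\to\gl(V)/P$ to obtain $g_i\to 1$ with $g_ib_i=p_2$; then $(g_ia_i,p_2)\in e_B$ and $g_ia_i\to x$, so $x\in\overline{F_B(p_2)}$. This standard homogeneity argument is the only nontrivial step; the remainder is formal manipulation of closures together with the density of the open orbit and the properness of $\pi$.
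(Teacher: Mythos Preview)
Your argument is correct and follows the paper's overall strategy: show that one $S(\cdot,\cdot)$ lies in the closure of the other and then invoke the density of the open orbit (Corollary~\ref{uniqueorbit}). The paper's proof does this in a single line, passing directly from the orbit-closure hypothesis to $S(B,B')\subseteq\overline{S(A,A')}$ without further comment; that step tacitly uses $\Delta^{-1}(\overline{e_A\times e_{A'}})\subseteq\overline{\Delta^{-1}(e_A\times e_{A'})}$, which is not automatic for a closed embedding $\Delta$. Your slice identity $\overline{e_B}\cap(\mathcal{F}\times\{p_2\})=\overline{F_B(p_2)}\times\{p_2\}$, obtained from the Zariski-local triviality of $\gl(V)\to\gl(V)/P$ for parabolic $P$, is precisely what justifies this step, and the one-factor-at-a-time reduction makes the argument transparent. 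Two minor remarks: properness of $\pi$ is not actually needed, since continuity alone gives $\pi(\overline{X})\subseteq\overline{\pi(X)}$, which is all you use; and the limit language in your final paragraph is better replaced by the explicit algebraic trivialisation $(y,q)\mapsto(s(q)^{-1}y,q)$ on $\mathcal{F}\times U$, which carries $e_B\cap(\mathcal{F}\times U)$ isomorphically onto $F_B(p_2)\times U$ and hence yields the slice identity upon taking closures and restricting to $q=p_2$.
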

\begin{proof}
Since $e_B\times e_{B'}\subseteq \overline{e_A \times e_{A'}}$
we have $S(B,B')\subseteq \overline{S(A,A')}$. By Corollary
\ref{uniqueorbit}, we have that $\overline{S(A,A')}$ is the orbit
closure of $e_A\star e_{A'}$ and $\overline{S(B,B')}$ is the orbit
closure of $e_B\star e_{B'}$, the lemma follows.
\end{proof}

We have the following key lemma on degeneration and multiplication
in $G(n,r)$.

\begin{lemma} \label{ideallemma}
Let $\sigma \in S_r$,
$e_{B'}\subseteq \mathcal{F}_{\underline d} \times
\mathcal{F}_{\underline e}$ and $e_{B}\subseteq
\mathcal{F}_{\underline f} \times \mathcal{F}_{\underline g}$.
Then $e_{B'}\star (\sigma o_{\underline e,\underline f}) \star e_{B}
\leq_{deg}\sigma o_{\underline d,\underline g}$.
\end{lemma}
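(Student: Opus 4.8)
The plan is to reduce everything to the irreducibility already exploited for threefold products. Recall from the proof that $\star$ is associative (Section \ref{genmult}) that, taking the three factors to be $e_{B'}$, $\sigma o_{\underline e,\underline f}$ and $e_B$, the product $e_{B'}\star(\sigma o_{\underline e,\underline f})\star e_B$ is the unique open orbit of the irreducible set
$$T=\{(p_1,p_4)\mid \exists\, p_2,p_3:\ (p_1,p_2)\in e_{B'},\ (p_2,p_3)\in \sigma o_{\underline e,\underline f},\ (p_3,p_4)\in e_B\}.$$
Since the product orbit is dense in the irreducible closure $\overline T$, every orbit contained in $\overline T$ is a degeneration of it. Hence the lemma reduces to the single inclusion $\sigma o_{\underline d,\underline g}\subseteq \overline T$, and for this it is enough to exhibit one quadruple $(p_1,p_2,p_3,p_4)$, possibly in the closure of the variety of such quadruples, with the three prescribed pairwise incidences and with outer pair $(p_1,p_4)\in\sigma o_{\underline d,\underline g}$.

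First I would cut the two-sided problem down to a one-sided one. By Lemma \ref{proddeglemma} the product $\star$ is monotone in each argument for $\leq_{deg}$ (if $X\leq_{deg}X'$ then $e_{B'}\times X\subseteq\overline{e_{B'}\times X'}$, whence $e_{B'}\star X\leq_{deg}e_{B'}\star X'$), and the flip $(f,f')\mapsto(f',f)$ is a $\leq_{deg}$-preserving anti-automorphism of $G(n,r)$ carrying twisted open orbits to twisted open orbits. Combining these, the statement follows from the one-sided claim
$$(\sigma o_{\underline e,\underline f})\star e_B\ \leq_{deg}\ \sigma o_{\underline e,\underline g}\qquad (e_B\subseteq\mathcal F_{\underline f}\times\mathcal F_{\underline g}),$$
applied on the right and, via the flip, in its left-multiplication form, giving $e_{B'}\star(\sigma o_{\underline e,\underline f})\star e_B\leq_{deg} e_{B'}\star\sigma o_{\underline e,\underline g}\leq_{deg}\sigma o_{\underline d,\underline g}$.

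To prove the one-sided claim I would, by Lemma \ref{irred} and Corollary \ref{uniqueorbit} applied to $S(\sigma o_{\underline e,\underline f},e_B)$, construct flags $p_1\in\mathcal F_{\underline e}$, $p_2\in\mathcal F_{\underline f}$, $p_3\in\mathcal F_{\underline g}$ with $(p_1,p_2)\in\sigma o_{\underline e,\underline f}$, $(p_2,p_3)\in e_B$ and $(p_1,p_3)\in\sigma o_{\underline e,\underline g}$, or such a configuration in the boundary. Here the open factor is the lever: writing $\sigma o_{\underline e,\underline f}\simeq\bigoplus_l N_{i_{\sigma l},j_l}$, I realise $(p_1,p_2)$ on a common adapted basis $x_1,\dots,x_r$, so that $x_l$ enters $p_1$ at level $i_{\sigma l}$ and $p_2$ at level $j_l$. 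Because the reference open orbit $o_{\underline e,\underline g}\simeq\bigoplus_l N_{i_l,m_l}$ uses the same decreasing sequence $i_1\ge\cdots\ge i_r$ determined by $\underline e$, the left labelling $(i_{\sigma l})_l$ of $p_1$ is already the one required by the target $\sigma o_{\underline e,\underline g}$. I then choose $p_3$ realising $e_B$ with $p_2$ while forcing the right labelling of $(p_1,p_3)$ to be $(m_l)_l$, verifying the resulting degeneration through the transposition/extension description of $\leq_{deg}$ in Lemmas \ref{deglemma} and \ref{deglemma2}.

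The main obstacle is exactly this simultaneous realisation: $p_3$ must sit in the general position $e_B$ relative to $p_2$ and, at the same time, in the specific twisted-open position $\sigma o_{\underline e,\underline g}$ relative to $p_1$, and the four flags cannot all be taken monomial in one basis. Exact simultaneous realisation can genuinely fail — already for $n=r=2$ one finds configurations where the only way to meet the target is to pass to the boundary — so the real content is that the biparabolic (minimal) stabiliser of the open factor $\sigma o_{\underline e,\underline f}$, together with the consistency of the single permutation $\sigma$ threading through the left labelling, forces $\sigma o_{\underline e,\underline g}$ into the closure $\overline{S(\sigma o_{\underline e,\underline f},e_B)}$. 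Making this closure step precise, rather than the bookkeeping of the reductions above, is where the work lies.
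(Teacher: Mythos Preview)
Your reduction to the one-sided inequality $(\sigma o_{\underline e,\underline f})\star e_B\leq_{deg}\sigma o_{\underline e,\underline g}$ and the chaining via Lemma \ref{proddeglemma} match the paper exactly. The gap is in how you attack the one-sided claim: you keep $e_B$ arbitrary and then, as you acknowledge, cannot guarantee a simultaneous realisation of $(p_1,p_2)\in\sigma o_{\underline e,\underline f}$, $(p_2,p_3)\in e_B$ and $(p_1,p_3)\in\sigma o_{\underline e,\underline g}$, nor make the closure argument precise. This is a genuine missing step, and the paper resolves it by one further application of the very monotonicity you already invoked: since every orbit in $\mathcal F_{\underline f}\times\mathcal F_{\underline g}$ degenerates to the unique closed orbit $k_{\underline f,\underline g}$, Lemma \ref{proddeglemma} yields
\[
(\sigma o_{\underline e,\underline f})\star e_B\ \leq_{deg}\ (\sigma o_{\underline e,\underline f})\star k_{\underline f,\underline g},
\]
so it suffices to take $e_B=k_{\underline f,\underline g}$ (and likewise $e_{B'}=k_{\underline d,\underline e}$).

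For the closed orbit the obstacle you describe vanishes. By Lemma \ref{closedorbit}, $k_{\underline f,\underline g}=\bigoplus_l N_{j_l,k_l}$ with both $j_l$ and $k_l$ increasing; the convention $j_1\le\cdots\le j_r$ means these $j_l$ agree with those in $o_{\underline e,\underline f}=\bigoplus_l N_{i_l,j_l}$. Hence one may take $p_1,p_2,p_3$ all monomial in a single basis $x_1,\dots,x_r$, with $x_l$ entering $p_1,p_2,p_3$ at levels $i_{\sigma l},j_l,k_l$ respectively. Then $(p_1,p_3)\simeq\bigoplus_l N_{i_{\sigma l},k_l}=\sigma o_{\underline e,\underline g}$ exactly, so $\sigma o_{\underline e,\underline g}$ lies in $S(\sigma o_{\underline e,\underline f},k_{\underline f,\underline g})$ and the inequality follows without any passage to the boundary. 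Neither the biparabolic stabiliser nor Lemmas \ref{deglemma}--\ref{deglemma2} are needed here.
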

\begin{proof}
By Lemma \ref{proddeglemma}, it suffices to consider the case where
$e_B$ and $e_{B'}$ are closed orbits. By Lemma \ref{closedorbit},
we may choose the representation $$\bigoplus^r_{l=1}  N_{j_lk_l},$$
where $k_{l+1}\geq k_{l}$ and $j_{l+1}\geq j_{l}$
for the orbit $e_B$. Similarly, $o_{\underline e,\underline f}$ is the orbit corresponding
to the representation $$\bigoplus^r_{l=1}  N_{i_lj_l},$$ where $i_l\geq i_{l+1}$
by Lemma \ref{openorbit}. Then the coefficient of $\sigma
o_{\underline e,\underline g}$ in
the product $(\sigma o_{\underline e,\underline f})\cdot e_B$ in $S_q(n,r)$
is non-zero, and so $$(\sigma o_{\underline e,\underline f})\star e_B\leq_{deg}
\sigma o_{\underline e,\underline g}.$$  Similarly, $$e_{B'}\star \sigma
o_{\underline e,\underline g} \leq_{deg} \sigma o_{\underline d,\underline g}.$$
By Lemma \ref{proddeglemma}, $$e_{B'}\star \sigma o_{\underline e,\underline f}
\star e_B \leq_{deg}e_{B'}\star \sigma o_{\underline e,\underline g}\leq_{deg} 
o_{\underline d, \underline g},$$
as required.
\end{proof}

\begin{corollary}
Let $\sigma \in S_r$,
$e_{B'}\subseteq \mathcal{F}_{\underline d} \times \mathcal{F}_{\underline e}$
and $e_{B}\subseteq\mathcal{F}_{\underline f}
\times \mathcal{F}_{\underline g}$. Then $e_{B'}\star
(\sigma k_{\underline e,\underline f}) \star e_{B}
\leq_{deg}\sigma k_{\underline d,\underline g}$.
\end{corollary}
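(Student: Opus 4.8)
The plan is to mirror the proof of Lemma \ref{ideallemma} verbatim, replacing the open orbits $o$ throughout by closed orbits $k$ and invoking Lemma \ref{closedorbit} where the original proof used Lemma \ref{openorbit}. The three ingredients are the same: the reduction to closed outer factors via Lemma \ref{proddeglemma}, the two one-sided degeneration bounds, and the fact from Corollary \ref{uniqueorbit} that $e_A\star e_{A'}$ is the open orbit of $S(A,A')$.

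First I would reduce to the case $e_{B'}=k_{\underline d,\underline e}$ and $e_B=k_{\underline f,\underline g}$. Since the closed orbit is the unique maximal element of $\leq_{deg}$ in each $\mathcal{F}_{\underline a}\times\mathcal{F}_{\underline b}$ (every orbit degenerates to it), we have $e_{B'}\leq_{deg}k_{\underline d,\underline e}$ and $e_B\leq_{deg}k_{\underline f,\underline g}$, so two applications of Lemma \ref{proddeglemma} give $e_{B'}\star(\sigma k_{\underline e,\underline f})\star e_B\leq_{deg}k_{\underline d,\underline e}\star(\sigma k_{\underline e,\underline f})\star k_{\underline f,\underline g}$; the general case then follows by transitivity once the special case is settled. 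Next I would establish the one-sided bounds $(\sigma k_{\underline e,\underline f})\star k_{\underline f,\underline g}\leq_{deg}\sigma k_{\underline e,\underline g}$ and $k_{\underline d,\underline e}\star(\sigma k_{\underline e,\underline g})\leq_{deg}\sigma k_{\underline d,\underline g}$, and chain them with Lemma \ref{proddeglemma} exactly as in Lemma \ref{ideallemma}. For each such bound the mechanism is identical: by Corollary \ref{uniqueorbit} the product $e_A\star e_{A'}$ is open in $S(A,A')$, hence lies below, in $\leq_{deg}$, every orbit occurring in the support of the $S_q(n,r)$-product $e_A\cdot e_{A'}$; it therefore suffices to check that the target permuted closed orbit occurs with nonzero structure constant. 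Writing the closed orbits in the normal form $\bigoplus_l N_{i_lj_l}$ with $i_l\leq i_{l+1}$ supplied by Lemma \ref{closedorbit}, I would verify this membership either by exhibiting a compatible intermediate flag or by reading the relevant term off the multiplication formulae of Lemma \ref{Lemma3.2}.

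The hard part will be exactly this last support computation in the closed case. In Lemma \ref{ideallemma} the genericity of the open orbit makes it transparent that the permuted open orbit reappears in the product, whereas for closed orbits the combinatorics are different and must be redone directly, confirming that the increasing-index normal form of Lemma \ref{closedorbit} is stable under one-sided multiplication by $k_{\underline f,\underline g}$ and $k_{\underline d,\underline e}$. Everything surrounding this verification is formal and transfers without change from the proof of Lemma \ref{ideallemma}.
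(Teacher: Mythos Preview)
Your approach is correct in outline, but it is genuinely different from the paper's one-line argument and considerably longer. The paper does not redo the proof of Lemma \ref{ideallemma}; instead it observes that the closed orbit is itself a permuted open orbit. Concretely, if $\iota\in S_r$ is the order-reversing permutation $\iota(l)=r-l+1$, then the increasing $i$-sequence defining $k_{\underline e,\underline f}$ in Lemma \ref{closedorbit} is exactly the $\iota$-permutation of the decreasing $i$-sequence defining $o_{\underline e,\underline f}$ in Lemma \ref{openorbit}. Hence $\sigma k_{\underline e,\underline f}=(\sigma\iota)\,o_{\underline e,\underline f}$ and likewise $\sigma k_{\underline d,\underline g}=(\sigma\iota)\,o_{\underline d,\underline g}$, so the corollary is literally Lemma \ref{ideallemma} applied with the permutation $\sigma\iota$ in place of $\sigma$.

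What this buys is precisely that the ``hard part'' you isolate---checking that the target permuted closed orbit occurs in the $S_q(n,r)$-product---never needs to be verified separately: it was already done once, for open orbits, inside Lemma \ref{ideallemma}. Your mirrored argument would establish the same support statement directly (and it does go through, via the obvious common direct-sum decomposition of the three flags), but it duplicates work that the substitution $\sigma\mapsto\sigma\iota$ avoids entirely. The reduction to closed outer factors via Lemma \ref{proddeglemma}, which you give first, is the same in both proofs.
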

\begin{proof}
The corollary follows from the previous lemma since
$\sigma k_{\underline d,\underline g}=\sigma\iota o_{\underline d, \underline e}$,
where $\iota(i)=n-i+1$.
\end{proof}

\begin{corollary} \label{ideallemmacor}
Let $e_{B'}\subseteq \mathcal{F}_{\underline d} \times \mathcal{F}_{\underline e}$ and
$e_{B}\subseteq \mathcal{F}_{\underline f}\times \mathcal{F}_ {\underline g}$.
Then $e_{B'}\star o_{\underline e,\underline f} \star e_{B} =
o_{\underline d,\underline g}$. In particular, $o_{\underline d,\underline e}\star o_{\underline e,\underline f}=
o_{\underline d,\underline f}$.
\end{corollary}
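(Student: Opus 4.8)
The plan is to prove Corollary \ref{ideallemmacor} by combining the degeneration inequality of Lemma \ref{ideallemma} with a lower bound coming from associativity and the special multiplication rule of Corollary \ref{ideallemmacor}'s second statement, which I will establish first. The key observation is that $\leq_{deg}$ on a single orbit closure is a partial order, so an inequality in both directions forces equality of orbits. Thus my goal reduces to showing that $e_{B'}\star o_{\underline e,\underline f}\star e_B$ degenerates \emph{from} $o_{\underline d,\underline g}$ as well as \emph{to} it; the latter is already given by Lemma \ref{ideallemma} (taking $\sigma=\mathrm{id}$).

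\textbf{First} I would dispose of the special case $o_{\underline d,\underline e}\star o_{\underline e,\underline f}=o_{\underline d,\underline f}$ directly, since it is the engine for everything else. Using Lemma \ref{openorbit}, represent $o_{\underline d,\underline e}$ and $o_{\underline e,\underline f}$ by representations of $\Gamma$ of the form $\bigoplus N_{i_lj_l}$ and $\bigoplus N_{j_lk_l}$ with $i_l\geq i_{l+1}$ and $k_l\geq k_{l+1}$, lined up along the common middle decomposition $\underline e$. One then checks that the generic flag $f$ lying between representatives of these two open orbits produces a pair $[f_1,f_3]$ whose associated $\Gamma$-representation again has the staircase form $i_l\geq i_{l+1}$, $k_l\geq k_{l+1}$ of Lemma \ref{openorbit}, hence is open in $\mathcal{F}_{\underline d}\times\mathcal{F}_{\underline f}$; this is exactly $o_{\underline d,\underline f}$. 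Concretely, the open condition in $S(A,A')$ from Corollary \ref{uniqueorbit} selects the orbit of maximal $\dim(f_1+f_3)$ (as in Lemma \ref{genericmult}), and the staircase structure of the two open factors makes this maximal-sum orbit open again.

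\textbf{Next}, for the general statement I would use the second statement as a factor inside the first. Write $o_{\underline e,\underline f}=o_{\underline e,\underline e}\star o_{\underline e,\underline f}=o_{\underline e,\underline f}\star o_{\underline f,\underline f}$, so that $e_{B'}\star o_{\underline e,\underline f}\star e_B$ contains, via associativity of $\star$, products of the form $e_{B'}\star o_{\underline e,\underline d}$ and $o_{\underline g,\underline f}\star e_B$. Since $e_{B'}\subseteq\mathcal{F}_{\underline d}\times\mathcal{F}_{\underline e}$, multiplying the open orbit $o_{\underline e,\underline f}$ on the left by \emph{any} orbit ending at $\underline e$ cannot lower the generic dimension of the right flag's position, and the same on the right for $e_B$; combined with the idempotent-like behaviour of open orbits established in the special case, this yields $o_{\underline d,\underline g}\leq_{deg}e_{B'}\star o_{\underline e,\underline f}\star e_B$. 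Together with Lemma \ref{ideallemma} this pins the product to exactly $o_{\underline d,\underline g}$.

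\textbf{The main obstacle} I anticipate is the reverse degeneration $o_{\underline d,\underline g}\leq_{deg}e_{B'}\star o_{\underline e,\underline f}\star e_B$: Lemma \ref{ideallemma} only gives one direction, and one must argue that squeezing an open orbit between two \emph{arbitrary} orbits $e_{B'}$, $e_B$ cannot produce anything strictly more degenerate than the open orbit $o_{\underline d,\underline g}$. I expect the cleanest route is to verify it on generators: reduce $e_{B'}$ and $e_B$ to products of the $e_{i,\underline d}$ and $f_{i,\underline d}$ via Lemma \ref{generatorsGNR}, and then check that left- or right-multiplying an open orbit by a single generator preserves openness, using the explicit formula of Lemma \ref{Lemma3.20Schur} to confirm that the resulting matrix $X$ or $Y$ still satisfies the staircase condition of Lemma \ref{openorbit}. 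This inductive reduction to generators, rather than a direct geometric argument about $S(A,A')$, is what makes the equality tractable.
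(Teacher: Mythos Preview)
You have missed the single observation that makes this corollary immediate, and as a result your proposal is far more elaborate than necessary. Recall the convention in the paper: $M\leq_{deg}N$ means $N$ lies in the orbit closure of $M$. Lemma \ref{ideallemma} with $\sigma=\mathrm{id}$ gives
\[
e_{B'}\star o_{\underline e,\underline f}\star e_B \;\leq_{deg}\; o_{\underline d,\underline g},
\]
i.e.\ the open orbit $o_{\underline d,\underline g}$ lies in the closure of the orbit $e_{B'}\star o_{\underline e,\underline f}\star e_B$. But an open orbit cannot lie in the closure of any orbit other than itself; equivalently, $o_{\underline d,\underline g}$ is the unique minimal element for $\leq_{deg}$ in $\mathcal{F}_{\underline d}\times\mathcal{F}_{\underline g}$. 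Hence the product equals $o_{\underline d,\underline g}$, and the ``in particular'' statement is a special case, not a prerequisite. This is exactly the paper's one-line proof.

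The ``main obstacle'' you identify, namely the reverse inequality $o_{\underline d,\underline g}\leq_{deg} e_{B'}\star o_{\underline e,\underline f}\star e_B$, is therefore automatic: the open orbit degenerates to every orbit in its component. Your proposed detours---establishing the special case first via an explicit staircase argument, then reducing to generators and invoking Lemma \ref{Lemma3.20Schur} to check that multiplying by $e_{i,\underline d}$ or $f_{i,\underline d}$ preserves openness---would likely succeed, but they re-prove by hand what is already contained in the single word ``open''. The lesson is that in this degeneration order, anything that degenerates \emph{to} the open orbit must \emph{be} the open orbit.
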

\begin{proof}
By the lemma we know that $e_{B'}\star o_{\underline e,\underline f} \star e_{B}
\leq_{deg} o_{\underline d,\underline g}$. Since $o_{\underline d,\underline g}$
is the unique dense open orbit in $\mathcal{F}_d\times \mathcal{F}_g$, 
the equality follows.
\end{proof}

\begin{corollary}
$M(n,r)$ is a two-sided ideal in $G(n,r)$.
\end{corollary}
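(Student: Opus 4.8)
The plan is to read off the statement directly from Corollary \ref{ideallemmacor}, which already evaluates the triple products of the form $e_{B'}\star o_{\underline e,\underline f}\star e_B$. Since $M(n,r)$ is by definition the $\mathbb{Z}$-span of the open orbits $o_{\underline e,\underline f}$ and $G(n,r)$ carries the orbit basis $\mathcal{F}\times\mathcal{F}/\gl(V)$, it suffices to verify that $e_A\star o_{\underline e,\underline f}$ and $o_{\underline e,\underline f}\star e_A$ lie in $M(n,r)$ for every basis element $e_A$ and every open orbit $o_{\underline e,\underline f}$; closure under $\mathbb{Z}$-linear combinations then follows by bilinearity of $\star$.

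First I would record that the idempotents $k_{\underline d}=[h,h]$ serve as local units for $\star$: if $e_A\subseteq\mathcal{F}_{\underline c}\times\mathcal{F}_{\underline d}$, then $k_{\underline c}\star e_A=e_A=e_A\star k_{\underline d}$, while $k_{\underline b}\star e_A=0$ for $\underline b\neq\underline c$ and $e_A\star k_{\underline b}=0$ for $\underline b\neq\underline d$. The reason is that in $S_q(n,r)$ the product of a diagonal idempotent with $e_A$ is the single orbit $e_A$ (or $0$), so the corresponding set $S(\cdot,\cdot)$ consists of that one orbit and is hence equal to its own open orbit; the generic product thus agrees with the ordinary one in this case.

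Granting this, both inclusions become a single application of Corollary \ref{ideallemmacor}. For left multiplication write $e_A\subseteq\mathcal{F}_{\underline d}\times\mathcal{F}_{\underline e'}$. If $\underline e'\neq\underline e$ the product $e_A\star o_{\underline e,\underline f}$ vanishes and so lies in $M(n,r)$ trivially; if $\underline e'=\underline e$, I apply the corollary with $e_{B'}=e_A$ and $e_B=k_{\underline f}$ to get $e_A\star o_{\underline e,\underline f}\star k_{\underline f}=o_{\underline d,\underline f}$, and since $o_{\underline e,\underline f}\star k_{\underline f}=o_{\underline e,\underline f}$ by the local-unit property this reads $e_A\star o_{\underline e,\underline f}=o_{\underline d,\underline f}\in M(n,r)$. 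Right multiplication is symmetric: taking $e_{B'}=k_{\underline e}$ and $e_B=e_A\subseteq\mathcal{F}_{\underline f}\times\mathcal{F}_{\underline g}$ in the same corollary yields $o_{\underline e,\underline f}\star e_A=o_{\underline e,\underline g}\in M(n,r)$, or $0$ when the indices fail to match. Hence $M(n,r)$ is stable under $\star$ on both sides and is a two-sided ideal.

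There is no real obstacle beyond careful bookkeeping of the source/target indices: the entire content is already packaged in Corollary \ref{ideallemmacor}, and the only point demanding genuine care is the local-unit behaviour of the $k_{\underline d}$, where I would make sure to handle the vanishing (index-mismatch) cases explicitly so that every product is accounted for as either $0$ or a single open orbit.
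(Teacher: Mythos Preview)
Your proof is correct and follows exactly the paper's approach: the paper simply says that Corollary~\ref{ideallemmacor} shows $M(n,r)$ is closed under multiplication on both sides, and you have spelled out the one implicit step of specialising $e_B$ (respectively $e_{B'}$) to the local unit $k_{\underline f}$ (respectively $k_{\underline e}$) to extract one-sided closure from the triple-product identity. No additional ideas are needed.
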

\begin{proof}
The previous corollary shows that the $\mathbb{Z}$-submodule
$M(n,r)\subseteq G(n,r)$ is closed under multiplication from both
sides with elements from $G(n,r)$, and so it is a two-sided ideal.
\end{proof}

We will now show that $M(n,r)$ is a direct factor of $G(n,r)$. 

\begin{lemma} \label{lemddd}
$\{o_{\underline d}\}_{\underline d}\bigcup
\{k_{\underline d}-o_{\underline d}\}_{\underline d}$ is a set of pairwise orthogonal
idempotents in $G(n,r)$.
\end{lemma}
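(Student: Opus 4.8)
The plan is to reduce everything to three multiplication facts in $G(n,r)$ and then carry out an elementary bookkeeping. First I would recall, from the definition of $\star$ following Corollary \ref{uniqueorbit} (and the emptiness observation in the proof of Lemma \ref{irred}), that $e_A\star e_{A'}=0$ whenever the target flag of $e_A$ is not isomorphic to the source flag of $e_{A'}$. Since $o_{\underline d}=o_{\underline d,\underline d}$ and $k_{\underline d}=k_{\underline d,\underline d}$ both lie in $\mathcal{F}_{\underline d}\times\mathcal{F}_{\underline d}$, any $\star$-product of a term indexed by $\underline d$ with a term indexed by $\underline e\neq\underline d$ vanishes for dimension-vector reasons. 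This already yields pairwise orthogonality between all four types of idempotents whenever the indices differ, so it suffices to work inside a single block $\mathcal{F}_{\underline d}\times\mathcal{F}_{\underline d}$.

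Second, I would record that $o_{\underline d}\star o_{\underline d}=o_{\underline d}$, which is precisely the case $\underline d=\underline e=\underline f$ of the identity $o_{\underline d,\underline e}\star o_{\underline e,\underline f}=o_{\underline d,\underline f}$ in Corollary \ref{ideallemmacor}. Third, I would show that $k_{\underline d}$ is a two-sided local identity: for any orbit $e_B=[f_1,f_2]$ with $f_1\in\mathcal{F}_{\underline d}$ one has $k_{\underline d}\star e_B=e_B$, and symmetrically $e_B\star k_{\underline d}=e_B$ when $f_2\in\mathcal{F}_{\underline d}$. This is a direct unwinding of the definition of $\star$: writing $k_{\underline d}=[h,h]$ with $h\in\mathcal{F}_{\underline d}$, any triple $(p_1,p_2,p_3)$ lying over $k_{\underline d}\times e_B$ forces $p_1=p_2$ and $(p_2,p_3)\simeq e_B$, so the set $S(A,A')$ attached to $e_A=k_{\underline d}$ and $e_{A'}=e_B$ consists of the single orbit $e_B$, which is then trivially its own open orbit. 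In particular $k_{\underline d}\star k_{\underline d}=k_{\underline d}$ and $k_{\underline d}\star o_{\underline d}=o_{\underline d}=o_{\underline d}\star k_{\underline d}$.

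With these three facts in hand the rest is routine. Writing $q_{\underline d}=k_{\underline d}-o_{\underline d}$, the expansion $q_{\underline d}\star q_{\underline d}=k_{\underline d}\star k_{\underline d}-k_{\underline d}\star o_{\underline d}-o_{\underline d}\star k_{\underline d}+o_{\underline d}\star o_{\underline d}=k_{\underline d}-o_{\underline d}-o_{\underline d}+o_{\underline d}=q_{\underline d}$ shows $q_{\underline d}$ is idempotent, while $o_{\underline d}\star o_{\underline d}=o_{\underline d}$ handles $o_{\underline d}$; the mixed products satisfy $o_{\underline d}\star q_{\underline d}=o_{\underline d}\star k_{\underline d}-o_{\underline d}\star o_{\underline d}=o_{\underline d}-o_{\underline d}=0$ and likewise $q_{\underline d}\star o_{\underline d}=0$, and all products across distinct indices vanish by the first fact. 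I do not expect a serious obstacle here: the only step requiring a genuine (if short) argument is the local-identity property of $k_{\underline d}$, and once cross-block orthogonality is isolated via the matching-flag condition, the remaining identities are just a four-term expansion inside a single block.
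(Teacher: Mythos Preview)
Your proposal is correct and follows essentially the same approach as the paper: the paper's proof invokes Corollary \ref{ideallemmacor} for the within-block identities $(o_{\underline d})^2=o_{\underline d}$, $(k_{\underline d}-o_{\underline d})o_{\underline d}=0$, $o_{\underline d}(k_{\underline d}-o_{\underline d})=0$, and $(k_{\underline d}-o_{\underline d})^2=k_{\underline d}-o_{\underline d}$, and then says the remaining orthogonality relations follow from the definition of the multiplication. You spell out more explicitly both the cross-block vanishing and the local-identity property of $k_{\underline d}$, but the argument is the same.
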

\begin{proof}
By Corollary \ref{ideallemmacor}, $(o_{\underline d})^2=
o_{\underline d}$, $(k_{\underline d}-o_{\underline d})o_{\underline d}=
o_{\underline d}-o_{\underline d}=0$, $o_{\underline d}(k_{\underline d}-
o_{\underline d})=o_{\underline d}-o_{\underline d}=0$, and $(k_{\underline d}-
o_{\underline d})^2=(k_{\underline d}-o_{\underline d}-o_{\underline d}+
o_{\underline d})=k_{\underline d}-o_{\underline d}$. All other
orthogonality relations follow from the definition of multiplication
in $S_q(n,r)$.
\end{proof}

Let $M(D(n,r))$ be the algebra of $|D(n,r)|\times |D(n,r)|$-matrices over $\mathbb{Z}$.
Let $$\omega_0:M(n,r)\rightarrow M(D(n,r))$$ be the $\mathbb{Z}$-linear
map where $\omega_0(o_{\underline d,\underline e})=E_{\underline d,\underline e}$
is the $(\underline d,\underline e)$'th elementary matrix in $M(D(n,r))$.

\begin{lemma}
The map $\omega_0:M(n,r)\rightarrow M(D(n,r))$ is a
$\mathbb{Z}$-algebra isomorphism.
\end{lemma}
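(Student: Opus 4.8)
The plan is to verify directly that $\omega_0$ is simultaneously a bijection of $\mathbb{Z}$-modules and a multiplicative map on basis elements, with essentially all the content already supplied by Corollary \ref{ideallemmacor}. First I would note that the open orbits $o_{\underline d,\underline e}$ are indexed precisely by pairs $(\underline d,\underline e)\in D(n,r)\times D(n,r)$: every product $\mathcal{F}_{\underline d}\times\mathcal{F}_{\underline e}$ is nonempty and hence contains a unique open orbit by Lemma \ref{openorbit}. Thus $\omega_0$ carries the $\mathbb{Z}$-basis $\{o_{\underline d,\underline e}\}$ of $M(n,r)$ bijectively onto the $\mathbb{Z}$-basis $\{E_{\underline d,\underline e}\}$ of elementary matrices in $M(D(n,r))$, so it is already an isomorphism of $\mathbb{Z}$-modules, and it remains only to check that it respects products.

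Since both algebras are free over $\mathbb{Z}$ on these bases, multiplicativity need only be checked on basis elements, that is, I would show $$\omega_0(o_{\underline d,\underline e}\star o_{\underline f,\underline g})=E_{\underline d,\underline e}E_{\underline f,\underline g}$$ for all $\underline d,\underline e,\underline f,\underline g\in D(n,r)$. The right-hand side equals $\delta_{\underline e,\underline f}\,E_{\underline d,\underline g}$ by the elementary-matrix multiplication rule, so there are two cases. When $\underline e=\underline f$, Corollary \ref{ideallemmacor} gives $o_{\underline d,\underline e}\star o_{\underline e,\underline g}=o_{\underline d,\underline g}$, and applying $\omega_0$ yields $E_{\underline d,\underline g}=E_{\underline d,\underline e}E_{\underline e,\underline g}$, as required.

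The remaining case $\underline e\neq\underline f$ is the only place any verification is genuinely needed, and it is immediate from the definition of $\star$: the product $e_A\star e_{A'}$ vanishes whenever $S(A,A')$ is empty, which happens exactly when the second flag of a pair in $e_A$ cannot be isomorphic to the first flag of a pair in $e_{A'}$. Here the second component of $o_{\underline d,\underline e}$ lies in $\mathcal{F}_{\underline e}$ while the first component of $o_{\underline f,\underline g}$ lies in $\mathcal{F}_{\underline f}$, and these orbits are disjoint when $\underline e\neq\underline f$; hence $o_{\underline d,\underline e}\star o_{\underline f,\underline g}=0=E_{\underline d,\underline e}E_{\underline f,\underline g}$. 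Combining the two cases shows $\omega_0$ is multiplicative, and together with the bijection on bases this proves it is a $\mathbb{Z}$-algebra isomorphism. I anticipate no real obstacle: the whole argument is carried by the composition law $o_{\underline d,\underline e}\star o_{\underline e,\underline g}=o_{\underline d,\underline g}$ of Corollary \ref{ideallemmacor} together with the vanishing of $\star$ on mismatched middle terms.
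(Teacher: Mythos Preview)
Your proposal is correct and follows essentially the same approach as the paper, which simply states the result is an immediate consequence of Lemma \ref{lemddd} and Corollary \ref{ideallemmacor}. Your argument is in fact a careful unpacking of that one-line proof: bijectivity on bases plus the multiplication rule $o_{\underline d,\underline e}\star o_{\underline e,\underline g}=o_{\underline d,\underline g}$ from Corollary \ref{ideallemmacor}, together with vanishing on mismatched middle terms from the definition of $\star$, is exactly what is meant.
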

\begin{proof}
The proposition is an immediate consequence of Lemma \ref{lemddd} and
Corollary \ref{ideallemmacor}.
\end{proof}

We construct a section to the inclusion $M(n,r)\subseteq G(n,r)$.

\begin{lemma}
The map $\omega:G(n,r)\rightarrow M(n,r)$
defined by $$\omega(e_A)=o_{\underline d,\underline e}$$ for all $e_A\subseteq
\mathcal{F}_{\underline d}\times \mathcal{F}_{\underline e}$ is a surjective
$\mathbb{Z}$-algebra homomorphism.
\end{lemma}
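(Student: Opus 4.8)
The plan is to verify the three required properties in turn. That $\omega$ is $\mathbb{Z}$-linear is built into its definition, since it is prescribed on the basis $\mathcal{F}\times\mathcal{F}/\gl(V)$ of $G(n,r)$ and extended linearly. Surjectivity is immediate: every open orbit $o_{\underline d,\underline e}$ is itself one of the basis elements of $G(n,r)$, it satisfies $o_{\underline d,\underline e}\subseteq\mathcal{F}_{\underline d}\times\mathcal{F}_{\underline e}$, and hence $\omega(o_{\underline d,\underline e})=o_{\underline d,\underline e}$. Thus $\omega$ restricts to the identity on $M(n,r)$, and in particular its image is all of $M(n,r)$.

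The real content is that $\omega$ respects $\star$. Since $\mathcal{F}\times\mathcal{F}/\gl(V)$ is a multiplicative basis of $G(n,r)$, meaning the $\star$-product of two basis elements is again a basis element or zero, it suffices to check $\omega(e_A\star e_{A'})=\omega(e_A)\star\omega(e_{A'})$ for a single pair of basis elements. I would write $e_A\subseteq\mathcal{F}_{\underline d}\times\mathcal{F}_{\underline e}$ and $e_{A'}\subseteq\mathcal{F}_{\underline e'}\times\mathcal{F}_{\underline f}$ and split into cases according to whether the middle types $\underline e$ and $\underline e'$ agree.

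First I would dispose of the degenerate case. Recalling that $S(A,A')=\pi\Delta^{-1}(e_A\times e_{A'})$ is non-empty only when the two middle flags are isomorphic, the product $e_A\star e_{A'}$ vanishes unless $\underline e=\underline e'$; the same criterion applies to $o_{\underline d,\underline e}\star o_{\underline e',\underline f}$. Hence when $\underline e\neq\underline e'$ both sides are $0$ and there is nothing to prove. When $\underline e=\underline e'$, the description of $S(A,A')$ as the set of pairs $(p_1,p_3)$ admitting an intermediate flag $p_2$ shows that the open orbit $e_A\star e_{A'}$ lies in $\mathcal{F}_{\underline d}\times\mathcal{F}_{\underline f}$, so that $\omega(e_A\star e_{A'})=o_{\underline d,\underline f}$. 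On the other hand $\omega(e_A)\star\omega(e_{A'})=o_{\underline d,\underline e}\star o_{\underline e,\underline f}$, which equals $o_{\underline d,\underline f}$ by the final assertion of Corollary \ref{ideallemmacor}. The two sides therefore coincide.

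I do not expect a substantial obstacle here: once the outer types $\underline d$ and $\underline f$ are seen to be preserved by the product, read off directly from the definition of $S(A,A')$, multiplicativity reduces to the identity $o_{\underline d,\underline e}\star o_{\underline e,\underline f}=o_{\underline d,\underline f}$ already established in Corollary \ref{ideallemmacor}. The only point requiring a little care is the bookkeeping of the decomposition types through the product and the observation that the two products vanish in tandem precisely when the middle types disagree.
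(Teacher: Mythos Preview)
Your proof is correct and follows essentially the same approach as the paper's: verify surjectivity directly, then reduce multiplicativity on basis elements to the identity $o_{\underline d,\underline e}\star o_{\underline e,\underline f}=o_{\underline d,\underline f}$ of Corollary~\ref{ideallemmacor}. The only item the paper adds that you do not state explicitly is the unit check $\omega(1_{G(n,r)})=\omega\bigl(\sum_{\underline d}k_{\underline d}\bigr)=\sum_{\underline d}o_{\underline d}=1_{M(n,r)}$, which is immediate from your computation but worth recording.
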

\begin{proof}
The map is clearly a surjective $\mathbb{Z}$-module homomorphism.
Let $e_A\subseteq \mathcal{F}_{\underline d}\times \mathcal{F}_{\underline e}$
and $e_B\subseteq \mathcal{F}_{\underline e}\times \mathcal{F}_{\underline f}$. Then
$\omega(e_A\star e_B)$ is the unique open orbit in $\mathcal{F}_{\underline d}\times
\mathcal{F}_{\underline f}$, which is equal to $\omega(e_A)\star \omega(e_B)$, by
Corollary \ref{ideallemmacor}. Moreover, $\omega(1_{G(n,r)})=\omega(\sum_{\underline d}
k_{\underline d})=\sum_{\underline d}o_{\underline d}=1_{M(n,r)}$. This
completes the proof of the lemma.
\end{proof}

We can now prove the main result of this section.

\begin{theorem}
We have an isomorphism of $\mathbb{Z}$-algebras $G(n,r)\rightarrow M(n,r)\times (G(n,r)/M(n,r))$ given by $e_A\mapsto (\omega(e_A), \overline{e_A})$.
\end{theorem}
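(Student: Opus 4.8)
The plan is to exhibit the map $G(n,r)\to M(n,r)\times (G(n,r)/M(n,r))$ as a composite of two ring maps, recognise it as the canonical map attached to a direct product decomposition, and then verify bijectivity. First I would record that the two component maps $\omega:G(n,r)\to M(n,r)$ and the quotient $\overline{(\cdot)}:G(n,r)\to G(n,r)/M(n,r)$ are both $\mathbb{Z}$-algebra homomorphisms by the preceding two lemmas (for $\omega$) and because $M(n,r)$ is a two-sided ideal (so the quotient is a ring and the projection a ring map). Hence the product map $\Phi(e_A)=(\omega(e_A),\overline{e_A})$ is automatically a $\mathbb{Z}$-algebra homomorphism, since each coordinate is.

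The key structural observation is that $\omega$ restricts to the identity on the ideal $M(n,r)$: for an open orbit $o_{\underline d,\underline e}$ we have $\omega(o_{\underline d,\underline e})=o_{\underline d,\underline e}$, so $\omega$ is a retraction (splitting) of the inclusion $M(n,r)\hookrightarrow G(n,r)$. I would then use the standard fact that a two-sided ideal admitting an algebra retraction is a direct factor: set $\iota=\iota_{M(n,r)}$ for the inclusion, so $\omega\circ\iota=\mathrm{id}_{M(n,r)}$. This shows $M(n,r)$ has a complementary ideal $J=\ker\omega$, and because $\omega$ is an idempotent endomorphism of the underlying $\mathbb{Z}$-module with image $M(n,r)$, one gets a $\mathbb{Z}$-module decomposition $G(n,r)=M(n,r)\oplus \ker\omega$.

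To finish I would prove $\Phi$ is an isomorphism directly at the level of $\mathbb{Z}$-modules. For injectivity: if $\Phi(x)=0$ then $\omega(x)=0$, so $x\in\ker\omega$, and $\overline{x}=0$ means $x\in M(n,r)$; but then $x=\iota(\omega(x))=0$ using $\omega|_{M(n,r)}=\mathrm{id}$, i.e. any element of $M(n,r)\cap\ker\omega$ is killed by the retraction and hence is zero. For surjectivity: given $(m,\overline{y})\in M(n,r)\times (G(n,r)/M(n,r))$, I would take $x=\iota(m)+(y-\iota(\omega(y)))$; the second summand lies in $\ker\omega$ and maps to $\overline{y}$ in the quotient (since $\iota(\omega(y))\in M(n,r)$), while $\omega(x)=m+0=m$, so $\Phi(x)=(m,\overline{y})$. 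Alternatively, one can simply cite that a surjective retraction $\omega$ of a ring onto a two-sided ideal yields the internal direct product $G(n,r)\cong M(n,r)\times (G(n,r)/M(n,r))$ via $x\mapsto(\omega(x),\overline{x})$, whose inverse is $(m,\overline{y})\mapsto m+(y-\iota\omega(y))$.

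The only point requiring any care—and the place I would focus the write-up—is that both coordinate maps are genuine ring homomorphisms and that the splitting really is two-sided, so that $\ker\omega$ is an ideal complementary to $M(n,r)$ rather than merely a module complement; this is exactly what the earlier corollaries guarantee, since Corollary \ref{ideallemmacor} forces $\omega(e_A\star e_B)=\omega(e_A)\star\omega(e_B)$ and the ideal property of $M(n,r)$ makes $\ker\omega$ automatically a two-sided ideal. I expect no serious obstacle beyond bookkeeping, since all the multiplicative input (idempotents, the matrix-algebra identification, and the degeneration estimates forcing products of open orbits to be open) is already in place.
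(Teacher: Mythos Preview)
Your argument is correct. The paper takes a slightly shorter route: instead of working with the retraction $\omega$ and verifying bijectivity by hand, it observes directly from Corollary~\ref{ideallemmacor} that $e=\sum_{\underline d} o_{\underline d}$ is a \emph{central} idempotent in $G(n,r)$ with $M(n,r)=eG(n,r)e$, so the standard Peirce decomposition $G(n,r)\cong eG(n,r)\times (1-e)G(n,r)$ yields the theorem in one stroke. Your approach packages the same multiplicative input differently: centrality of $e$ is exactly the statement that the idempotent endomorphism $\iota\circ\omega$ (left multiplication by $e$) is a two-sided module map, which is what forces $\ker\omega$ to be a complementary ideal rather than merely a module complement. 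The paper's version is more economical since ``central idempotent $\Rightarrow$ direct factor'' is a one-line citation; your version has the minor advantage of making the inverse map $(m,\overline y)\mapsto m+(y-\iota\omega(y))$ explicit.
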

\begin{proof}
By Corollary \ref{ideallemmacor}, we have
$$M(n,r)=(\sum_{\underline d} o_{\underline d})G(n,r)
(\sum_{\underline d} o_{\underline d}).$$
Now, $1_{G(n,r)} = \sum_{\underline d}k_{\underline d}$, and again by Corollary
\ref{ideallemmacor}, $\sum_{\underline d} o_{\underline d}$ is a central
idempotent in $G(n,r)$. This proves that $M(n,r)$ is a direct factor
in $G(n,r)$, and so the theorem follows.
\end{proof}

Let ${\mathbb{\widetilde{A}}}_n$ denote the preprojective algebra of type
$\mathbb{A}_n$. See \cite{Bill} for the definition and properties
of preprojective algebras.

\begin{corollary}
$S_0(2,r)\simeq M(2,r)\times {\mathbb{\widetilde{A}}}_{r-1}$
\end{corollary}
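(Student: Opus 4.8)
The plan is to combine the two structural isomorphisms already proved with a short presentation computation. By Theorem \ref{geomreal} we have $S_0(2,r)\cong G(2,r)$, and by the preceding theorem $G(2,r)\cong M(2,r)\times\big(G(2,r)/M(2,r)\big)$ as $\mathbb{Z}$-algebras. Thus the whole statement reduces to identifying the complementary factor, and I would show
$$G(2,r)/M(2,r)\cong {\mathbb{\widetilde{A}}}_{r-1}.$$

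To do this I would transport everything through the presentation $\eta:\mathbb{Z}\Sigma(2,r)/I_0(2,r)\to G(2,r)$ of Theorem \ref{presgnr}. For $n=2$ the quiver $\Sigma(2,r)$ is the double of the linear quiver $\mathbb{A}_{r+1}$: writing $K_i=K_{(i,r-i)}$ for $i=0,\dots,r$, the arrows are $E_1\colon K_i\to K_{i+1}$ and $F_1\colon K_{i+1}\to K_i$. Since only the index $i=1$ is available, the relations $P_{ij}(0)$ and $N_{ij}(0)$ are vacuous, so $I_0(2,r)$ is generated by the commutator relations $C_{11,(i,r-i)}(0)=E_1F_1-F_1E_1-\lambda_iK_i$, where $\lambda_0=-1$, $\lambda_r=1$ and $\lambda_i=0$ otherwise. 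In other words $G(2,r)$ is, up to signs, the deformed preprojective algebra of $\mathbb{A}_{r+1}$ whose deformation is supported only at the two boundary vertices $K_0$ and $K_r$.

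Next I would identify $M(2,r)$ with the two-sided ideal generated by the two boundary idempotents. Both $k_{(0,r)}$ and $k_{(r,0)}$ lie in $M(2,r)$, because the boundary flag varieties are points and so $k_{(0,r)}=o_{(0,r)}$ and $k_{(r,0)}=o_{(r,0)}$ are open orbits. Conversely, every open orbit lies in $\langle k_{(0,r)}\rangle$: since $\mathcal{F}_{(0,r)}$ is a point, Corollary \ref{ideallemmacor} gives $o_{\underline{d},(0,r)}\star k_{(0,r)}\star o_{(0,r),\underline{e}}=o_{\underline{d},\underline{e}}$ for all $\underline{d},\underline{e}\in D(2,r)$. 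Hence $M(2,r)=\langle k_{(0,r)},k_{(r,0)}\rangle$, and by the third isomorphism theorem
$$G(2,r)/M(2,r)\cong \mathbb{Z}\Sigma(2,r)\big/\big(I_0(2,r)+\langle K_{(0,r)},K_{(r,0)}\rangle\big).$$

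Finally I would read off the quotient. Killing the idempotents $K_0$ and $K_r$ deletes these two vertices together with their incident arrows, so the underlying quiver becomes the double of $\mathbb{A}_{r-1}$ on the vertices $K_1,\dots,K_{r-1}$; the two boundary relations $C_{11,(0,r)}(0)$ and $C_{11,(r,0)}(0)$, being multiples of deleted idempotents, vanish together with their deformation terms. The surviving relations are $E_1F_1-F_1E_1=0$ for $1\le i\le r-1$. At an interior vertex this is the preprojective relation; at the new end vertex $K_1$ the loop $E_1F_1$ factors through the deleted $K_0$ and the relation reduces to $F_1E_1=0$, while at $K_{r-1}$ the loop $F_1E_1$ factors through $K_r$ and reduces to $E_1F_1=0$. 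Matching $E_1$ with the arrows of $\mathbb{A}_{r-1}$ and $F_1$ with their reverses, these are exactly the preprojective relations of ${\mathbb{\widetilde{A}}}_{r-1}$, which completes the proof. The step I expect to be the main obstacle is this middle identification: checking cleanly that the deformation parameter is concentrated precisely at the two boundary vertices and that $M(2,r)$ is exactly the ideal they generate, so that passing to the quotient removes the deformation and converts the deformed preprojective algebra of $\mathbb{A}_{r+1}$ into the genuine preprojective algebra of $\mathbb{A}_{r-1}$.
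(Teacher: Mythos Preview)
Your argument is correct, and it follows a genuinely different route from the paper's. The paper works inside $G(2,r)$: it looks at the corner algebra $\bigl(\sum_{\underline d}(k_{\underline d}-o_{\underline d})\bigr)G(2,r)\bigl(\sum_{\underline d}(k_{\underline d}-o_{\underline d})\bigr)$ cut out by the complementary central idempotent, writes down explicit generators $e_{1,\underline d}-o_{\underline d-\alpha_2+\alpha_1,\underline d}$, $f_{1,\underline d}-o_{\underline d+\alpha_2-\alpha_1,\underline d}$, $k_{\underline d}-o_{\underline d}$, checks by direct computation that they satisfy the preprojective relations, and finishes by a dimension count. Your approach instead passes to the quotient $G(2,r)/M(2,r)$ and exploits the presentation of Theorem~\ref{presgnr}: you recognise $\Sigma(2,r)$ as the double of $\mathbb{A}_{r+1}$, identify $M(2,r)$ with the ideal generated by the two boundary idempotents $K_{(0,r)},K_{(r,0)}$ (your use of Corollary~\ref{ideallemmacor} together with $k_{(0,r)}=o_{(0,r)}$ is the clean way to do this), and then read off the quotient presentation directly. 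What your approach buys is that no dimension comparison is needed: once the boundary vertices are killed, the surviving relations are visibly the preprojective relations for $\mathbb{A}_{r-1}$, so the identification is by presentation rather than by surjection-plus-count. The paper's approach, on the other hand, stays closer to the geometry by exhibiting the preprojective generators as explicit differences of orbits.
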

\begin{proof}
We need to show that $$(\sum k_{\underline d}-o_{\underline d})G(n,r)
(\sum k_{\underline d}-o_{\underline d}) \simeq
{\mathbb{\widetilde{A}}}_{r-1}.$$ First observe that
$(\sum k_{\underline d}-o_{\underline d})G(n,r)
(\sum k_{\underline d}-o_{\underline d})$ is generated
by $e_{1,\underline d}-o_{\underline d-\alpha_2+
\alpha_1,\underline d}$, $f_{1,\underline d}-o_{\underline
d+\alpha_2-\alpha_1,d}$ and $k_{\underline d}-o_{\underline d}$.
A direct computation shows that
the generators satisfy the
preprojective relations. By comparing dimensions we get the
required isomorphism.
\end{proof}

Let $\underline n=n_1+\cdots+n_l$ and $\underline r=r_1+\cdots+r_l$ be
decompositions of $n$ and $r$, respectively, into $l$ parts, where $n_i>0$.
Let $m_i=\sum^{i-1}_{j=1}n_j$, where $m_1=0$.

There is a map $\phi_j$ of flags of length $n_j$ to flags of length $n$ given
by $\phi_j(f)_l=0$ for $l\leq m_j$, $\phi_j(f)_l=f_{l-m_j}$
for $m_j<l\leq m_{j+1}$ and $\phi_j(f)_l=f_{n_j}$ for $l>m_{j+1}$.
The corresponding map on orbits of pairs of flags
$$[f,f']\mapsto [\phi_j(f),\phi_j(f')]$$ is also denoted by $\phi_j$.

Let $$\phi_{\underline n,\underline r}: G(n_1,r_1)\times \cdots \times
G(n_l,r_l)\rightarrow G(n,r)$$ be the $\mathbb{Z}$-linear map defined by
$$(N_1,\cdots,N_l)\mapsto \phi_{1}(N_1)\oplus\cdots \oplus \phi_{l}(N_l).$$

\begin{lemma} \label{above}
The map $$\phi_{\underline n,\underline r}:G(n_1,r_1)\times \cdots \times
G(n_l,r_l)\rightarrow G(n,r)$$ is an injective $\mathbb{Z}$-algebra
homomorphism. Moreover, $\phi_{\underline n,\underline r}(N_1,\cdots,
N_l)\leq_{deg}\phi_{\underline n,\underline r}(N'_1,\cdots, N'_l)$ if and
only if $N_i\leq_{deg} N'_i$ for all $i$.
\end{lemma}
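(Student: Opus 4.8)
The plan is to prove the two assertions separately, treating the algebra homomorphism property first and then the degeneration equivalence. First I would unwind the definition of $\phi_{\underline n,\underline r}$ on the basis of orbits: a tuple $(N_1,\dots,N_l)$ of pairs of flags, viewed as representations $N_j$ of $\Gamma(n_j)$ that are projective on both arms, is sent to the block-diagonal sum $\bigoplus_j \phi_j(N_j)$, where $\phi_j$ pads a length-$n_j$ flag by zeros below level $m_j$ and stabilises it above $m_{j+1}$. The key structural observation is that each $\phi_j$ is a fully faithful embedding of representation categories of $\Gamma(n_j)$ into $\Gamma(n)$ landing in the arm-projective locus, so that $\phi_j(N_j)\oplus \phi_{j'}(N_{j'})$ for $j\neq j'$ has no homomorphisms or extensions between the two factors (their supports on each arm are disjoint intervals with no overlapping indecomposable summands $N_{i_lj_l}$). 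Injectivity then follows because the multiset of indecomposable summands of the image determines each $N_j$.

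To prove $\phi_{\underline n,\underline r}$ is an algebra homomorphism I would argue through the generic product $\star$. It suffices to check on generators, or better, to use Corollary \ref{ideallemmacor} and the block structure: a product $[f,f']\star[f',f'']$ in $G(n,r)$ restricted to the block-diagonal subvariety factors through the independent blocks, because the open orbit in $S(A,A')$ for a block-diagonal pair is itself block-diagonal. Concretely, since the middle flag $f'$ in the defining span $\pi\Delta^{-1}$ can be chosen compatibly with the direct sum decomposition $V=\bigoplus_j V^{(j)}$ induced by the padding, the irreducible variety $S(A,A')$ for $\phi_{\underline n,\underline r}$-images decomposes as a product of the block-wise varieties $S(A_j,A_j')$; taking open orbits commutes with this product, so $\phi_{\underline n,\underline r}(N_1,\dots,N_l)\star \phi_{\underline n,\underline r}(N_1',\dots,N_l')=\phi_{\underline n,\underline r}(N_1\star N_1',\dots,N_l\star N_l')$. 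This uses that $\phi_j$ preserves the lattice operations $\cap,+$ and the minimal projective resolution structure, which is immediate from the explicit formula for $\phi_j$.

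For the degeneration statement, I would invoke the description of $\leq_{deg}$ from Section 8 together with the fact that $\Gamma$ is Dynkin, so $\leq_{deg}=\leq_{ext}$ by Bongartz. The forward direction is easy: if $N_i\leq_{deg}N_i'$ for each $i$, then a degenerating one-parameter family in each block assembles into a family of the block-diagonal representations, giving $\phi_{\underline n,\underline r}(N_1,\dots,N_l)\leq_{deg}\phi_{\underline n,\underline r}(N_1',\dots,N_l')$. The converse is the main obstacle: I must show a degeneration of the block-diagonal representation only happens block-wise. Here I would use the no-Hom/no-Ext disjointness of distinct blocks established above, which by the theory of Dynkin degenerations forces every elementary extension in a chain realising the degeneration to occur within a single block $\Gamma(n_j)$; equivalently, the summand multiplicities of $N_{i_lj_l}$ with indices in the $j$-th range are determined by the $j$-th component, and $\leq_{deg}$ respects this block grading because no minimal degeneration (a single transposition, by Lemma \ref{deglemma2}) can mix summands supported on disjoint index blocks. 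Thus any degeneration of the image restricts to independent degenerations $N_i\leq_{deg}N_i'$, completing the proof.

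The step I expect to be genuinely delicate is verifying that the open orbit in $S(A,A')$ really does respect the block decomposition, i.e.\ that genericity computed globally agrees with genericity computed block by block; the cleanest route is to observe that the stabiliser of a block-diagonal pair is the product of the block stabilisers (since cross-block Hom spaces vanish), so minimising stabiliser dimension decouples across blocks, and the open orbit is exactly the block-diagonal sum of the block-wise open orbits.
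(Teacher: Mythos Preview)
Your treatment of the degeneration statement is essentially the paper's: reduce to a minimal degeneration, invoke Lemma~\ref{deglemma2} to write it as a single transposition, and observe that a transposition mixing two blocks would destroy the block-diagonal matrix form of the image. That is exactly what the paper does, so this half is fine.

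The gap is in your argument for the algebra homomorphism property. Your stabiliser argument rests on the claim that for $j\neq j'$ the summands $\phi_j(N_j)$ and $\phi_{j'}(N_{j'})$ have ``supports on each arm [that] are disjoint intervals'' and hence no homomorphisms between them. This is false: the padding in the definition of $\phi_j$ stabilises the flag at its top value for all levels above $m_{j+1}$, so $\phi_j(N_{ab})=N_{m_j+a,\,m_j+b}$ as a $\Gamma(n)$-representation, and \emph{every} such indecomposable is supported all the way up to the central vertex $n$ on both arms. In particular, for $j<j'$ one has $\hom(N_{p'q'},N_{pq})\simeq k$ whenever $p',q'$ lie in the $j'$-block and $p,q$ in the $j$-block, so the endomorphism algebra of the block sum is strictly larger than the product of the block endomorphism algebras. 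Thus ``minimising stabiliser dimension decouples across blocks'' does not follow from vanishing of cross-Homs, and neither does your earlier no-Ext remark for the converse of the degeneration claim. (One can rescue the stabiliser idea by noting these cross-Hom dimensions are constant, equal to $r_jr_{j'}$, once one knows all orbits in $S(A,A')$ are block-diagonal --- but that last fact is itself what needs proving.)

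The paper bypasses this entirely: using Lemma~\ref{Lemma2.2} it identifies $\phi_{\underline n,\underline r}$ on orbits with the map $(e_{A_1},\dots,e_{A_l})\mapsto e_{A_1\oplus\cdots\oplus A_l}$ of block-diagonal matrices, and then checks directly from the explicit multiplication rule in Lemma~\ref{Lemma3.20Schur} that multiplying a block-diagonal $e_A$ by a generator $e_{i,\underline d}$ or $f_{i,\underline d}$ coming from a single block stays block-diagonal and agrees with the block-wise product. Your passing remark ``it suffices to check on generators'' is the correct route and is precisely this; the elaborate stabiliser detour is where the error lies.
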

\begin{proof}
Since $\phi_{\underline n,\underline r}$ is injective on basis elements, it is an
injective $\mathbb{Z}$-linear map.  
By Lemma \ref{Lemma2.2}, in terms of matrices, the map is given by $$\phi_{\underline n,\underline r}(e_{A_1},\cdots,e_{A_l})=
e_{A_1\oplus\cdots\oplus  A_l}.$$
Following Lemma \ref{Lemma3.20Schur}, the map 
$\phi_{\underline n,\underline r}$ preserves multiplication and thus is an injective $\mathbb{Z}$-algebra homomorphism.

Let $$N=\phi_{\underline n,\underline r}(N_1,\cdots, N_l)
\mbox{ and } N'=\phi_{\underline n,\underline r}(N'_1,\cdots, N'_l),$$
and $N\leq_{deg} N'$. We may assume that the degeneration is minimal.
By Lemma \ref{deglemma2}, $N'=(t,s)N$ for a transposition $(t,s)$.
Then the transposition $(t,s)$ must act within one $N_i$, since the off-diagonal
blocks of the  matrices of both $N$ and $N'$ are zero, and so
$$(t,s)N =\phi_{\underline n,\underline r}(N_1,\cdots, N_{i-1}, (t',s')N_i,N_{i+1},\cdots,  N_l)$$
for a transposition $(t',s')$. This shows that $N_i\leq_{deg} N'_i$ for all $i$.

The converse also follows from Lemma \ref{deglemma2}.
\end{proof}

Let $\underline n$, $\underline r$ and $m_i$ be as above.
Let $\underline r_i=d_{m_i+1}+\cdots+d_{m_{i+1}}$, a decomposition
of $r_i$ into $n_i$ parts, and $\underline d = d_1 + \cdots + d_n$,
which is an element in $D(n,r)$.
Let $$o_{(\underline d,\underline n)} =\phi(o_{\underline r_1},
\cdots,o_{\underline r_l}),$$ which is an idempotent, by Lemma \ref{above}. 
Note that $k_{\underline d}=
o_{(\underline d,1+\cdots+1)}$ and that $o_{\underline d}=
o_{(\underline d, n)}$, where $n$ denotes the trivial decomposition
of $n$ into $1$ part.



For a given $\underline d$, there is one idempotent for each decomposition
$\underline n$, and so this produces $2^{n-1}$ idempotents in 
$k_{\underline d}G(n,r)k_{\underline d}$,
if $k_{\underline d}$ is in the interior of
the quiver of $G(n,r)$ viewed as an $(n-1)$-simplex. If $k_{\underline d}$ is
on the boundary, but in the interior of a $t$-simplex, then there are $2^{t}$
idempotents. In particular, on a line we get the two idempotents $k_{\underline d}$
and $o_{\underline d}$, and for the vertices of the simplex we have the unique
idempotent $k_{\underline d}=o_{\underline d}$.

\begin{lemma}
If $o_{(\underline d,\underline n)}\leq_{deg} N$, then
$o_{(\underline d,\underline n)}\star N=N\star
o_{(\underline d,\underline n)}=o_{(\underline d,\underline n)}$.
\end{lemma}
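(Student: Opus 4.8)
The plan is to reduce the statement to an absorption property \emph{within each block}, by first showing that the hypothesis $o_{(\underline d,\underline n)}\leq_{deg}N$ forces $N$ to be block-diagonal, i.e. to lie in the image of the embedding $\phi_{\underline n,\underline r}$ of Lemma \ref{above}. Write $o=o_{(\underline d,\underline n)}=\phi_{\underline n,\underline r}(o_{\underline r_1},\dots,o_{\underline r_l})$. Since $\phi_{\underline n,\underline r}$ produces block-diagonal matrices, the corresponding representation of $\Gamma$ decomposes as $\bigoplus_{l}N_{i_lj_l}$ in which, after sorting $j_1\leq\cdots\leq j_r$, the summands group into the consecutive vertex-ranges $R_1<R_2<\cdots<R_l$ of the $l$ blocks, the pairing $i_l\leftrightarrow j_l$ respects these ranges, and within each block the $i$-labels are strictly decreasing (the block being a full open orbit, by Lemma \ref{openorbit}).

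First I would analyse which transpositions give degenerations of $o$, along the same lines as the proof of Lemma \ref{above}. Because $\leq_{deg}$ equals the extension order and is therefore the transitive closure of minimal degenerations, Lemma \ref{deglemma2} shows that any $N$ with $o\leq_{deg}N$ is reached from $o$ by a chain of transpositions, each of which, by Lemma \ref{deglemma}, swaps two positions $t<s$ with $i_t>i_s$. The key observation is that for positions lying in two different blocks $R_a<R_b$ one always has $i_t\in R_a<i_s\in R_b$, so such a pair is never an inversion; this remains true after any within-block swap, since such swaps only permute the $i$-labels internally and preserve the set of $i$-labels assigned to each block. Consequently every degenerating transposition acts inside a single block, the whole chain stays block-diagonal, and $N=\phi_{\underline n,\underline r}(N_1,\dots,N_l)$ for orbits $N_a\subseteq\mathcal{F}_{\underline r_a}\times\mathcal{F}_{\underline r_a}$.

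With $N$ block-diagonal, the conclusion is immediate: since $\phi_{\underline n,\underline r}$ is an algebra homomorphism (Lemma \ref{above}),
$$o_{(\underline d,\underline n)}\star N=\phi_{\underline n,\underline r}(o_{\underline r_1}\star N_1,\dots,o_{\underline r_l}\star N_l),$$
and in each block $o_{\underline r_a}$ is the full open orbit, so Corollary \ref{ideallemmacor} (applied with $k_{\underline r_a}$ on the left) gives $o_{\underline r_a}\star N_a=o_{\underline r_a}$. Hence $o_{(\underline d,\underline n)}\star N=\phi_{\underline n,\underline r}(o_{\underline r_1},\dots,o_{\underline r_l})=o_{(\underline d,\underline n)}$, and the symmetric computation, absorbing $N_a$ from the left with $o_{\underline r_a}$ on the right, yields $N\star o_{(\underline d,\underline n)}=o_{(\underline d,\underline n)}$.

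I expect the main obstacle to be the first step, namely verifying rigorously that the up-set of $o_{(\underline d,\underline n)}$ in the degeneration order consists only of block-diagonal representations; everything else is a formal consequence of $\phi_{\underline n,\underline r}$ being a homomorphism together with the absorption property of full open orbits in Corollary \ref{ideallemmacor}. One should take care that the block-diagonal support is genuinely preserved at \emph{every} stage of the chain, not merely at the first step, which is why the argument tracks the set of $i$-labels attached to each block rather than the effect of a single transposition.
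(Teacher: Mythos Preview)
Your argument is correct, but it takes a much longer route than the paper's. The paper gives a two-line sandwich argument: since $o_{(\underline d,\underline n)}$ is an idempotent and $N\leq_{deg}k_{\underline d}$ (the latter being the closed orbit in $\mathcal{F}_{\underline d}\times\mathcal{F}_{\underline d}$), monotonicity of $\star$ with respect to $\leq_{deg}$ (Lemma \ref{proddeglemma}) yields
\[
o_{(\underline d,\underline n)}=o_{(\underline d,\underline n)}\star o_{(\underline d,\underline n)}\;\leq_{deg}\;N\star o_{(\underline d,\underline n)}\;\leq_{deg}\;k_{\underline d}\star o_{(\underline d,\underline n)}=o_{(\underline d,\underline n)},
\]
forcing equality. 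No block-diagonal decomposition of $N$ is needed.

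What your approach buys is an additional structural statement: the up-set of $o_{(\underline d,\underline n)}$ in the degeneration order is exactly the image of $\phi_{\underline n,\underline r}$. This is a genuine strengthening, not contained in Lemma \ref{above} (which only compares two elements already known to be in the image), and your inductive tracking of the $i$-label sets per block is precisely what is required to justify it. The cost is that you reprove machinery already packaged in Lemma \ref{proddeglemma}; the paper simply uses that lemma directly and avoids any analysis of $N$ itself.
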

\begin{proof}
We have $o_{(\underline d,\underline n)}=o_{(\underline d,\underline n)}\star
o_{(\underline d,\underline n)}\leq_{deg} N\star o_{(\underline d,\underline n)}
\leq_{deg} k_{\underline d}\star o_{(\underline d,\underline n)} =
o_{(\underline d,\underline n)}$ by Lemma \ref{proddeglemma}. The proof
of the other equality is similar.
\end{proof}

\section{Geometric realisation of $0$-Hecke algebras}

In this section let $n=r$ and $\underline d=1+\cdots+1$. In this case
$\mathcal{F}_{\underline d}$ is the complete flag variety. The
idempotent $k_{\underline d}$ is then the unique interior vertex
in the quiver of $G(n,n)$. Let $H_0(n)=k_{\underline d}
G(n,n)k_{\underline d}$. The $\mathbb{Z}$-algebra $H_0(n)$
is called the $0$-Hecke algebra \cite{Carter,Donkin,Norton,KrobTh},
and we give a proof of this fact in this section.

From the previous section we have $2^{n-1}$ distinct
idempotents $o_{(\underline d,\underline n)}$, one for each
decomposition $\underline n=n_1+\cdots+n_l$ of $n$ with $n_i>0$.

Let $$t_i=(i,i+1)k_{\underline d}.$$ We have
$$t_i=o_{(\underline d,\underline n)},$$ where $n=n_1+\cdots+n_{r-1}$
with $n_i=2$ and $n_j=1$ for $j\neq i$, and so $t_i$ is an idempotent.
Also $$t_{i}=f_{\underline d+\alpha_i-\alpha_{i+1}}\star e_{i,\underline d}=
e_{\underline d-\alpha_i+\alpha_{i+1}}\star f_{i,\underline d}.$$

For a permutation $\sigma$ and a transposition $(i,i+1)$, define
$$(i,i+1)\star \sigma = \left\{\begin{matrix} (i,i+1)\sigma
& \mbox{ if } (i,i+1)\sigma \leq_{deg} \sigma \mbox{ and }\\
\sigma & \mbox{ otherwise. }\end{matrix}\right.$$ Write
$\tau\leq_{\star} \sigma$ if $\tau=(i,i+1)\star \sigma$ for some $i$,
and denote the closure as a partial order also by $\leq_{\star}$.
We clearly have that $\tau \leq_{\star} \sigma$ implies $\tau\leq_{deg}
\sigma$.

\begin{lemma}
$t_{i}\star \sigma k_{\underline d}=((i,i+1)\star \sigma) k_{\underline d}$
\end{lemma}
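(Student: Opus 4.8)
The plan is to exploit the factorisation $t_i=f_{i,\underline d+\alpha_i-\alpha_{i+1}}\star e_{i,\underline d}$ recorded just above the statement, together with associativity of $\star$, so that $t_i\star\sigma k_{\underline d}=f_{i,\underline d+\alpha_i-\alpha_{i+1}}\star\bigl(e_{i,\underline d}\star\sigma k_{\underline d}\bigr)$, and then to evaluate the two inner products by Lemma \ref{Lemma3.20Schur}. First I would translate everything into matrices. Since $n=r$ and $\underline d=1+\cdots+1$, each orbit $\sigma k_{\underline d}$ is $e_P$ for the permutation matrix $P=P_\sigma$ (in particular $k_{\underline d}=e_I$ for $\sigma=\mathrm{id}$); I would fix the convention $a=\sigma^{-1}(i)$, $b=\sigma^{-1}(i+1)$, so that the unique nonzero entries of $P$ in rows $i$ and $i+1$ lie in columns $a$ and $b$. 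Both hypotheses of Lemma \ref{Lemma3.20Schur} hold, since $d_{i+1}=1>0$ and, after the first move, the $i$th component of the source decomposition equals $2>0$.

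Next I would carry out the two matrix moves. By Lemma \ref{Lemma3.20Schur}(i), $p=\max\{j\mid P_{i+1,j}>0\}=b$, so $e_{i,\underline d}\star e_P=e_X$ with $X=P+E_{i,b}-E_{i+1,b}$; thus $X$ has two $1$'s in row $i$, in columns $a$ and $b$, and an empty row $i+1$. Then by Lemma \ref{Lemma3.20Schur}(ii), $p'=\min\{j\mid X_{i,j}>0\}=\min(a,b)$, and $f_{i,\underline d+\alpha_i-\alpha_{i+1}}\star e_X=e_Y$ with $Y=X-E_{i,p'}+E_{i+1,p'}$. Everything then reduces to reading off $Y$ in the two cases $a<b$ and $a>b$.

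Finally I would match the two cases to the definition of $(i,i+1)\star\sigma$. If $a<b$ then $p'=a$, and $Y$ is the permutation matrix sending column $a$ to row $i+1$ and column $b$ to row $i$, i.e. $Y=P_{(i,i+1)\sigma}$; if $a>b$ then $p'=b$, the second move undoes the first, and $Y=P=P_\sigma$. On the other side, the transposition carrying $\sigma k_{\underline d}$ to $(i,i+1)\sigma k_{\underline d}$ exchanges the left indices of the summands $N_{i,a}$ and $N_{i+1,b}$ of the representation $\bigoplus_l N_{\sigma(l),l}$, so Lemma \ref{deglemma} gives $(i,i+1)\sigma\leq_{deg}\sigma$ exactly when $a<b$. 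Hence $(i,i+1)\star\sigma=(i,i+1)\sigma$ when $a<b$ and $(i,i+1)\star\sigma=\sigma$ when $a>b$, which in both cases agrees with the computed $Y$, proving the identity.

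The routine but delicate point, and the main obstacle, is the bookkeeping that aligns the purely combinatorial matrix operations (the $\max$ in row $i+1$ and the $\min$ in row $i$) with the degeneration criterion of Lemma \ref{deglemma}. Once the identification $\sigma k_{\underline d}=e_{P_\sigma}$ and the convention $a=\sigma^{-1}(i)$, $b=\sigma^{-1}(i+1)$ are fixed, both sides split along the same dichotomy between $a<b$ and $a>b$, and the remaining verification is mechanical.
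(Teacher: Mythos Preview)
Your proposal is correct and follows essentially the same approach as the paper: both use the factorisation $t_i=f_{i,\underline d+\alpha_i-\alpha_{i+1}}\star e_{i,\underline d}$, apply Lemma~\ref{Lemma3.20Schur} to the permutation matrix of $\sigma k_{\underline d}$, and split into the two cases according to the relative position of the nonzero columns in rows $i$ and $i+1$ (your $a,b$ are the paper's $r,s$). Your write-up is slightly more explicit in spelling out the two matrix moves and in invoking Lemma~\ref{deglemma} for the degeneration criterion, but the argument is the same.
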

\begin{proof}
Let $A$ be a matrix such that $e_A=\sigma k_{\underline d}$. The matrix
$A$ is a permutation matrix. We assume that $A_{i,r}=A_{i+1,s}=1$.
By Lemma \ref{Lemma3.20Schur},

$$t_i\star \sigma k_{\underline d} = f_{i,\underline d+\alpha_i-\alpha_{i+1}}
\star e_{i,\underline d}\star e_A = \left\{\begin{matrix}e_A & \mbox{ if } r>s \\ e_{A'} &
\mbox{ if } r<s, \end{matrix}\right.$$
where $A'$ is obtained from $A$ by swapping the $i$'th and $(i+1)$'th rows.
The lemma follows since $e_{A'}=(i,i+1)\sigma k_{\underline d}$ and since
$r<s$ if and only if $(i,i+1)\sigma \leq_{deg} \sigma$.
\end{proof}

Although it can be deduced from a bubble sort algorithm that the $t_i$
generate $H_0(n)$ as an algebra, we will give an explicit construction
of each of the basis element using the multiplication in $H_0(n)$.

\begin{lemma} Suppose $i<j$. Then
$t_{i}\star t_{i+1} \star \cdots \star t_{j-1} =(i,i+1,\cdots,j)k_{\underline d}$,
where $(i,i+1,\cdots,j)$ is a cycle in $S_n$.
\end{lemma}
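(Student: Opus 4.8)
The plan is to induct on the difference $j-i$, stripping off the leftmost factor at each step and invoking the preceding lemma, which evaluates $t_i\star(\sigma k_{\underline d})$ as $((i,i+1)\star\sigma)k_{\underline d}$. For the base case $j-i=1$ the product is the single factor $t_{j-1}=(j-1,j)k_{\underline d}=(i,i+1,\dots,j)k_{\underline d}$, so there is nothing to prove. For the inductive step I would set $\sigma=(i+1,i+2,\dots,j)$ and use the inductive hypothesis $t_{i+1}\star\cdots\star t_{j-1}=\sigma k_{\underline d}$, so that
$$t_i\star t_{i+1}\star\cdots\star t_{j-1}=t_i\star(\sigma k_{\underline d})=((i,i+1)\star\sigma)k_{\underline d}.$$
It then suffices to show that $(i,i+1)\star\sigma=(i,i+1)\sigma$ and that this product equals the long cycle $(i,i+1,\dots,j)$.

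The identity $(i,i+1)\sigma=(i,i+1,\dots,j)$ is an elementary computation with permutations: $\sigma$ fixes $i$, sends $j\mapsto i+1$, and sends $k\mapsto k+1$ for $i+1\leq k\leq j-1$, so premultiplying by $(i,i+1)$ turns this into the cycle $i\mapsto i+1\mapsto\cdots\mapsto j\mapsto i$. The content of the inductive step, and the step I expect to be the main obstacle, is verifying the degeneration condition $(i,i+1)\sigma\leq_{deg}\sigma$ that makes the $\star$-product actually apply the transposition rather than return $\sigma$ unchanged; this is the only place where the geometry, as opposed to bookkeeping with permutations, enters.

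To check this I would translate into the representation description of Section 8: $\sigma k_{\underline d}$ corresponds to $\bigoplus_{l=1}^r N_{\sigma(l),l}$, whose $i$-index sequence is $(\sigma(1),\dots,\sigma(r))$. Since $\sigma$ fixes $i$ and $\sigma(j)=i+1$, passing from $\sigma$ to $(i,i+1)\sigma$ corresponds to swapping the $i$-indices in positions $i$ and $j$, where they read $\sigma(i)=i$ and $\sigma(j)=i+1$, that is, in ascending order. Applying Lemma \ref{deglemma} to the transposition $(i,j)$ acting on $(i,i+1)\sigma N$, whose $i$-indices in those positions are in descending order, gives $(i,i+1)\sigma N<_{deg}\sigma N$, hence $(i,i+1)\sigma<_{deg}\sigma$; therefore $(i,i+1)\star\sigma=(i,i+1)\sigma=(i,i+1,\dots,j)$ and the induction closes. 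Equivalently, one can read the condition straight off the preceding lemma: the columns of the two nonzero entries in rows $i$ and $i+1$ of the permutation matrix of $\sigma k_{\underline d}$ are $\sigma^{-1}(i)=i$ and $\sigma^{-1}(i+1)=j$, and since $i<j$ the swap is performed.
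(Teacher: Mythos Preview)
Your proof is correct and follows exactly the paper's approach: induction on $j-i$, stripping off the leftmost $t_i$ and applying the preceding lemma, together with the identity $(i,i+1)(i+1,\dots,j)=(i,i+1,\dots,j)$. The paper's proof simply asserts $(i,i+1)\star(i+1,\dots,j)=(i,i+1)(i+1,\dots,j)$ without justification, whereas you supply the verification of the degeneration condition (via Lemma~\ref{deglemma} or, equivalently, via the column comparison $\sigma^{-1}(i)=i<j=\sigma^{-1}(i+1)$ in the permutation matrix); this extra detail is correct and welcome.
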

\begin{proof}
The corollary follows from the previous lemma by induction, since
$(i,i+1,\cdots,j)=(i,i+1)(i+1,\cdots, j)=(i,i+1)\star (i+1,\cdots, j)$.
\end{proof}

Let $\sigma$ be a permutation. Let $$t^\sigma=t^{\sigma,n}\star
\cdots \star t^{\sigma,1}$$ be defined by $$t^{\sigma,1}=
t_{1}\star t_{2}\star \cdots \star t_{\sigma^{-1}(1)-1}$$
and then $$t^{\sigma,i}=t_{i}\star
t_{i+1} \star\cdots \star t_{\tau_{i-1}\sigma^{-1}(i)-1},$$ where
$\tau_{i-1}$ is given by $$\tau_{i-1}k_{\underline d}=t^{\sigma,i-1}
\star\cdots\star t^{\sigma,1}.$$

\begin{theorem} \label{generatorhecke}
With the notation above, $t^\sigma=\sigma k_{\underline d}$. Consequently,
the set of all $t^{\sigma}$ for $\sigma\in S_n$ is a multiplicative basis of $H_0(n)$.
\end{theorem}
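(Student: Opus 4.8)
The plan is to prove the identity $t^\sigma = \sigma k_{\underline d}$ by analysing the recursion one outer step at a time. Write $\tau_i$ for the permutation determined by $\tau_i k_{\underline d} = t^{\sigma,i}\star\cdots\star t^{\sigma,1}$, so that $\tau_0 = \mathrm{id}$, $\tau_n k_{\underline d} = t^\sigma$, and the recursion reads $\tau_i k_{\underline d} = t_i \star t_{i+1}\star\cdots\star t_{m_i-1}\star(\tau_{i-1}k_{\underline d})$ with $m_i = \tau_{i-1}(\sigma^{-1}(i))$. The goal is the invariant: for every $i$ one has $\tau_i^{-1}(v) = \sigma^{-1}(v)$ for all $v\le i$, while the remaining values $i+1,\dots,n$ occur in increasing order of value along the positions not yet occupied by $1,\dots,i$. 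Taking $i=n$ gives $\tau_n = \sigma$ and hence the theorem. I would first record, from the preceding multiplication lemma together with Lemma \ref{deglemma}, the precise firing criterion: $t_j \star \rho k_{\underline d} = (j,j+1)\rho\,k_{\underline d}$ exactly when value $j$ lies to the left of value $j+1$ in $\rho$, i.e.\ when $\rho^{-1}(j) < \rho^{-1}(j+1)$, and $t_j$ leaves $\rho k_{\underline d}$ unchanged otherwise.

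The heart of the argument, and the step I expect to be the main obstacle, is to show that in the outer step $i$ every factor $t_{m_i-1}, t_{m_i-2}, \dots, t_i$ actually fires when applied, right to left, to $\tau_{i-1}k_{\underline d}$. I would prove this by a short inner induction: after the factors $t_{m_i-1},\dots,t_{j+1}$ have fired, the current permutation is $(j+1,j+2,\dots,m_i)\tau_{i-1}$, in which value $j+1$ sits at position $\tau_{i-1}^{-1}(m_i) = \sigma^{-1}(i)$, while value $j$ is untouched and therefore still sits at its $\tau_{i-1}$-position, a filler position carrying a value $<m_i$. Because the fillers of $\tau_{i-1}$ increase with position (the invariant at stage $i-1$), value $j$ lies strictly to the left of value $m_i$, that is, to the left of $\sigma^{-1}(i)$; hence value $j$ precedes value $j+1$ and the firing criterion applies, so $t_j$ fires. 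Once all factors fire, the cycle factorisation $(j,j+1)(j+1,j+2)\cdots(m_i-1,m_i) = (j,j+1,\dots,m_i)$ used in the preceding cycle lemma shows that $\tau_i = (i,i+1,\dots,m_i)\tau_{i-1}$.

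Finally I would re-establish the invariant for $\tau_i$ from this formula: left multiplication by the cycle $(i,i+1,\dots,m_i)$ relabels values $i\mapsto i+1\mapsto\cdots\mapsto m_i\mapsto i$, so the value $i$ now occupies the former position of $m_i$, namely $\sigma^{-1}(i)$, giving $\tau_i^{-1}(i)=\sigma^{-1}(i)$; the already placed values $1,\dots,i-1$ are fixed by the cycle and so remain correctly placed, and the relabelled fillers $i+1,\dots,n$ stay in increasing order of value along the remaining positions. This closes the outer induction, proving $\tau_n = \sigma$ and thus $t^\sigma = \sigma k_{\underline d}$. For the last assertion, the basis of $H_0(n) = k_{\underline d}G(n,n)k_{\underline d}$ consists exactly of the orbits $\sigma k_{\underline d}$, $\sigma\in S_n$; since these orbits form part of the multiplicative basis $\mathcal{F}\times\mathcal{F}/\gl(V)$ of $G(n,r)$ and the set of such orbits is closed under $\star$ (a product $\sigma k_{\underline d}\star\tau k_{\underline d}$ again lies in $\mathcal{F}_{\underline d}\times\mathcal{F}_{\underline d}$), the equalities $t^\sigma = \sigma k_{\underline d}$ show that $\{t^\sigma\}_{\sigma\in S_n}$ is a multiplicative basis of $H_0(n)$.
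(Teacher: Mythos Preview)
Your proof is correct and follows essentially the same inductive scheme as the paper: after stage $i$ the values $1,\dots,i$ sit at positions $\sigma^{-1}(1),\dots,\sigma^{-1}(i)$, which the paper phrases representation-theoretically as the summands $N_{j,\sigma^{-1}(j)}$ being present in $\tau_i k_{\underline d}$ and fixed by all later $t^{\sigma,j}$. Your version is in fact more complete than the paper's terse argument, since you make explicit the filler-ordering part of the invariant and verify the firing criterion at each inner step---both are needed for the induction but are left implicit in the paper.
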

\begin{proof}
By the previous lemma, $t^{\sigma,1} = (1,\cdots,\sigma^{-1}(1))k_{\underline d}$.
As a representation $\tau_{1}k_{\underline d}=t^{\sigma,1}$ has the summand
$N_{1,\sigma^{-1}(1)}$, which is fixed by any $t_i$ for $i>1$, and therefore
by $t^{\sigma,i}$ for $i>1$. By induction $\tau_{i}k_{\underline d}$
has the summands $N_{j,\sigma^{-1}(j)}$ for $j=1,\cdots,i$, which are fixed
by $t^{\sigma,j}$ for $j>i$. Therefore $t^\sigma=\sigma K_{\underline d}$.
\end{proof}

We construct the idempotents $o_{(\underline d,\underline n)}$ using the
generators $t_i$. Let $[i,j]$ be an interval in $[1,\cdots,n]$. Define $t^{[i,j]}$
by induction as follows. Let $t^{[i,i]}=k_d$ and $$t^{[i,j]}=
t^{[i+1,j]}\star t_{i}\star\cdots \star t_{j-1}.$$
To each decomposition $\underline n=n_1+\cdots+n_l$ we
have the element $$t^{\underline n}=t^{[m_1+1,m_2]}\star\cdots
\star t^{[m_{l}+1,m_{l+1}]}$$ where $m_i=\sum^{i-1}_{j=1}n_j$ and $m_1=0$.

\begin{corollary}
$t^{\underline n}=O_{(\underline d,\underline n)}$.
\end{corollary}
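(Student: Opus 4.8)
The plan is to show that both $t^{\underline n}$ and $o_{(\underline d,\underline n)}$ equal the orbit $w_{\underline n}k_{\underline d}$, where $w_{\underline n}$ is the longest element of the Young subgroup $S_{n_1}\times\cdots\times S_{n_l}\subseteq S_r=S_n$; concretely, the permutation that reverses each block $[m_a+1,m_{a+1}]$ and fixes everything outside its block.

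First I would identify $o_{(\underline d,\underline n)}$. Since $\underline d=1+\cdots+1$, each $\underline r_i$ equals $1+\cdots+1$ with $n_i$ parts, so $o_{\underline r_i}$ is the open orbit in the complete flag variety for $\gl_{n_i}$. By Lemma \ref{openorbit} its representation is $\bigoplus_p N_{i_p,p}$ with $i_p\geq i_{p+1}$, which in the permutation notation $\sigma k$ of Section 8 is exactly the longest element $w_0^{(i)}\in S_{n_i}$, i.e. the reversal of the block. Using Lemma \ref{above}, which says $\phi_{\underline n,\underline r}$ is block diagonal on the defining matrices, the element $o_{(\underline d,\underline n)}=\phi_{\underline n,\underline r}(o_{\underline r_1},\dots,o_{\underline r_l})$ corresponds to the block-diagonal permutation reversing each block; that is, $o_{(\underline d,\underline n)}=w_{\underline n}k_{\underline d}$.

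Next I would compute $t^{[i,j]}$ by induction, claiming $t^{[i,j]}=w_{[i,j]}k_{\underline d}$, where $w_{[i,j]}$ reverses the interval $[i,j]$. The base case $t^{[i,i]}=k_{\underline d}$ is the identity. For the inductive step, the cycle formula $t_i\star\cdots\star t_{j-1}=(i,i+1,\cdots,j)k_{\underline d}$ gives $t^{[i,j]}=t^{[i+1,j]}\star(i,\cdots,j)k_{\underline d}$. The key point is that the $\star$-product on the basis $\{\sigma k_{\underline d}\}$ is the Demazure ($0$-Hecke) product: from the lemma $t_i\star\sigma k_{\underline d}=((i,i+1)\star\sigma)k_{\underline d}$ together with Lemma \ref{deglemma} one checks that $\leq_{deg}$ is the opposite of the Bruhat order, so $s_i\star\sigma=s_i\sigma$ precisely when $\ell(s_i\sigma)>\ell(\sigma)$, and by associativity any product of the $t_a$ computes the Demazure product of the corresponding permutations. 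Now $w_{[i+1,j]}\cdot(i,\cdots,j)=w_{[i,j]}$ as ordinary permutations, and the length count $\binom{j-i}{2}+(j-i)=\binom{j-i+1}{2}=\ell(w_{[i,j]})$ shows this product is reduced; hence the Demazure product coincides with the ordinary one and $t^{[i,j]}=w_{[i,j]}k_{\underline d}$.

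Finally, $t^{\underline n}=t^{[m_1+1,m_2]}\star\cdots\star t^{[m_l+1,m_{l+1}]}$ becomes the Demazure product of the block reversals $w_{[m_a+1,m_{a+1}]}$. Since these permutations have pairwise disjoint supports, their lengths add and their product is reduced, so again the Demazure product equals the ordinary product, which is exactly $w_{\underline n}$. Thus $t^{\underline n}=w_{\underline n}k_{\underline d}=o_{(\underline d,\underline n)}$. I expect the main obstacle to be the bookkeeping in the middle step: pinning down that the orbit $\star$-product restricts to the Demazure product on $\{\sigma k_{\underline d}\}$ with the correct length/Bruhat conventions, so that the concatenated words are indeed reduced, rather than any single hard computation.
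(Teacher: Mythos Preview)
Your argument is correct and is essentially the paper's intended approach: the paper states this as an immediate corollary of Theorem~\ref{generatorhecke}, the point being that the recursive definition of $t^{[i,j]}$ unwinds to precisely the word $t^{w_{[i,j]}}$ appearing in that theorem (both equal $(t_{j-1})(t_{j-2}t_{j-1})\cdots(t_i\cdots t_{j-1})$), whence $t^{\underline n}=w_{\underline n}k_{\underline d}$, and $o_{(\underline d,\underline n)}=w_{\underline n}k_{\underline d}$ follows from Lemmas~\ref{openorbit} and~\ref{above}. Your Demazure-product framing with reduced words and length counts is a clean way to package the same computation without having to match specific words.
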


We end this section by giving a proof of the fact that $H_0(n)$
is the $0$-Hecke algebra given with generators and
relations for instance in \cite{KrobTh}. Now recall the $0$-Hecke algebra, denoted by
$\mathcal{H}_0(n)$, which is an $\mathbb{C}$-algebra generated by
$\mathcal{T}_i$ for $i=1,\cdots,n-1$ with generating relations

\begin{itemize}
\item[i)] $\mathcal{T}_i^2=-\mathcal{T}_i$,
\item[ii)] $\mathcal{T}_i\mathcal{T}_{i+1}\mathcal{T}_i=
\mathcal{T}_{i+1}\mathcal{T}_i\mathcal{T}_{i+1}$ and
\item[iii)] $\mathcal{T}_i\mathcal{T}_j=\mathcal{T}_j
\mathcal{T}_i$ for $|i-j|>1$.
\end{itemize}

The algebra $\mathcal{H}_0(n)$ is a specialisation of the $q$-Hecke algebra
at $q=0$ and has dimension $n!$.

\begin{theorem}
$H_0(n)\otimes_\mathbb{Z}\mathbb{C}\simeq \mathcal{H}_0(n)$
\end{theorem}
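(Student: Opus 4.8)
The plan is to write down the evident sign-twisted candidate map and then reduce the whole statement to the Demazure-product combinatorics already recorded for the $t_i$. Since the first relation in the presentation of $\mathcal{H}_0(n)$ reads $\mathcal{T}_i^2=-\mathcal{T}_i$, whereas the elements $t_i\in H_0(n)$ are idempotents, the correct normalisation is $\mathcal{T}_i\mapsto -t_i$. Concretely, I would define the unital $\mathbb{C}$-algebra homomorphism
$$\Phi:\mathcal{H}_0(n)\longrightarrow H_0(n)\otimes_{\mathbb{Z}}\mathbb{C},\qquad \Phi(\mathcal{T}_i)=-t_i,\ \Phi(1)=k_{\underline d},$$
noting that $k_{\underline d}$ is the unit of the corner algebra $H_0(n)=k_{\underline d}G(n,n)k_{\underline d}$.

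To see that $\Phi$ is well defined I would verify that the elements $-t_i$ satisfy relations $i)$, $ii)$, $iii)$. Relation $i)$ is immediate from idempotency, since $(-t_i)^2=t_i=-(-t_i)$. Relations $ii)$ and $iii)$ are invariant under the global sign, so they reduce to the braid identity $t_i\star t_{i+1}\star t_i=t_{i+1}\star t_i\star t_{i+1}$ and the commutation $t_i\star t_j=t_j\star t_i$ for $|i-j|>1$ inside $(H_0(n),\star)$. Both follow from the lemma computing $t_i\star\sigma k_{\underline d}=((i,i+1)\star\sigma)k_{\underline d}$: that formula identifies left $\star$-multiplication by $t_i$ with the left Demazure product $s_i\star\sigma$ on $S_n$, so the submonoid of $(H_0(n),\star)$ generated by $t_1,\dots,t_{n-1}$ is the $0$-Hecke (Demazure) monoid of $S_n$. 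Using associativity of $\star$ one computes $t_i\star t_{i+1}\star t_i=(s_i\star s_{i+1}\star s_i)k_{\underline d}=w\,k_{\underline d}=(s_{i+1}\star s_i\star s_{i+1})k_{\underline d}=t_{i+1}\star t_i\star t_{i+1}$, where $w$ is the longest element of the parabolic $\langle s_i,s_{i+1}\rangle$; the commutation is clear because disjoint transpositions commute under the Demazure product. Hence $\Phi$ is an algebra homomorphism.

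For surjectivity I would observe that the image of $\Phi$ contains every $t_i$, and by Theorem \ref{generatorhecke} each basis element $t^\sigma$ is a $\star$-product of the generators $t_i$; thus the image contains the entire basis $\{t^\sigma\}_{\sigma\in S_n}$ of $H_0(n)$, so $\Phi$ is surjective. I would then conclude by a dimension count: Theorem \ref{generatorhecke} shows that $H_0(n)$ is $\mathbb{Z}$-free of rank $n!$, whence $\dim_{\mathbb{C}}\bigl(H_0(n)\otimes_{\mathbb{Z}}\mathbb{C}\bigr)=n!=\dim_{\mathbb{C}}\mathcal{H}_0(n)$. A surjection between $\mathbb{C}$-vector spaces of the same finite dimension is an isomorphism, so $\Phi$ is the desired isomorphism.

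The main obstacle is the step identifying $\star$ on the generators with the Demazure product, that is, checking that the braid relation holds exactly in $(H_0(n),\star)$ rather than merely up to lower terms. Everything else is formal. This identification is forced by the explicit description of the star operation through the degeneration order, and is really a finite verification inside the rank-$2$ parabolic $\langle s_i,s_{i+1}\rangle$, which the preceding lemma on $t_i\star\sigma k_{\underline d}$ already makes transparent.
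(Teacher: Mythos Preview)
Your proof is correct and follows essentially the same route as the paper: define the map $\mathcal{T}_i\mapsto -t_i$, verify the $0$-Hecke relations (the paper merely says ``a direct computation'' where you spell out the Demazure-product argument via the lemma on $t_i\star\sigma k_{\underline d}$), invoke Theorem \ref{generatorhecke} for surjectivity, and conclude by the dimension count $n!=n!$. The only difference is that you make explicit the verification of the braid relation through the $0$-Hecke monoid structure, which the paper leaves as a routine check.
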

\begin{proof}
Let $$h:\mathcal{H}_0(n)\rightarrow H_0(n)\otimes_\mathbb{Z}\mathbb{C}$$ be given by
$h(\mathcal{T}_i)=-t_i$. A direct computation in $H_0(n)$ shows that $-t_i$
satisfy the $0$-Hecke relations i), ii) and iii) above, so the map is well defined.
The two algebras have the same dimension over $\mathbb{C}$, and so it suffices
to know that the map is surjective. But the $t_i$ are generators of $H_0(n)$
by Theorem \ref{generatorhecke} and so the proof is complete.
\end{proof}

\end{document}